\numberwithin{equation}{section}
\def\@tocline#1#2#3#4#5#6#7{\relax
  \ifnum #1>\c@tocdepth 
  \else
    \par \addpenalty\@secpenalty\addvspace{#2}%
    \begingroup \hyphenpenalty\@M
    \@ifempty{#4}{%
      \@tempdima\csname r@tocindent\number#1\endcsname\relax
    }{%
      \@tempdima#4\relax
    }%
    \parindent\z@ \leftskip#3\relax \advance\leftskip\@tempdima\relax
    \rightskip\@pnumwidth plus4em \parfillskip-\@pnumwidth
    #5\leavevmode\hskip-\@tempdima
      \ifcase #1
       \or\or \hskip 1em \or \hskip 2em \else \hskip 3em \fi%
      #6\nobreak\relax
    \hfill\hbox to\@pnumwidth{\@tocpagenum{#7}}\par
    \nobreak
    \endgroup
  \fi}
\DeclareSymbolFont{bbold}{U}{bbold}{m}{n}
\DeclareSymbolFontAlphabet{\mathbbold}{bbold}
\DeclarePairedDelimiter{\abs}{\lvert}{\rvert}
\newcommand{\floor}[1]{\lfloor #1 \rfloor}
\newcommand{\ceil}[1]{\lceil #1 \rceil}
\newcommand{\R}{\mathbb{R}}
\newcommand{\Z}{\mathbb{Z}}
\newcommand{\N}{\mathbb{N}}
\newcommand{\C}{\mathbb{C}}
\newcommand{\E}{\mathbb{E}}
\newcommand{\mf}{\mathfrak}
\newcommand{\bbone}{\mathbbold{1}}
\renewcommand{\l}{\lambda}
\newcommand{\eps}{\epsilon}
\renewcommand{\Re}[1]{\text{Re}(#1)}
\renewcommand{\Re}{\operatorname{Re}}
\newcommand{\var}{\text{Var}}
\newcommand{\pfrac}[2]{\left(\frac{#1}{#2}\right)}
\newcommand{\dde}[3]{\left. \frac{d #1}{d #2} \right|_{#3}} 
\newcommand{\dderiv}[2]{\frac{d #1}{d #2}}
\newcommand{\ot}{\otimes}
\newcommand{\tz}{{\tilde{z}}}
\newcommand{\tw}{{\tilde{w}}}
\newcommand{\la}{\left\langle}
\newcommand{\ra}{\right\rangle}
\newcommand{\tth}{^{th}}
\renewcommand{\L}{\Lambda}
\newcommand{\Y}{\mathbb{Y}}
\newcommand{\bx}{\mathbf{x}}
\newcommand{\by}{\mathbf{y}}
\newcommand{\bi}{\mathbf{i}}
\newcommand{\cP}{\mathbb{Y}}
\DeclareMathOperator{\len}{len}
\DeclareMathOperator{\Cov}{Cov}
\DeclareMathOperator{\const}{const}
\newcommand{\gap}{\operatorname{gap}}
\newcommand{\teps}{\tilde{\epsilon}}
\begin{document}


\tableofcontents

\section{Introduction}

\subsection{The model and main asymptotic results.}

Consider a configuration of particles on $\Z$ at some positions $x_1 > x_2 > \cdots$, at most one particle per site, evolving in continuous time. Each particle has an independent Poisson clock and jumps $1$ unit to the right whenever it rings. The clock of the $i\tth$ particle from the right has rate $1-q^{x_{i-1}-x_i-1}$, often simply written $1-q^{\gap}$, where $0 \leq q < 1$ and we take $x_0 := \infty$. This is the well-known \emph{$q$-TASEP}, introduced in \cite{borodin2014macdonald}, which reduces to the usual \emph{totally asymmetric exclusion process (TASEP)} when $q=0$. The asymptotics of $q$-TASEP and its relatives in various regimes have been the subject of much recent work, for example \cite{barraquand2015phase,borodin2014duality,borodin2015discrete,ferrari2015tracy,orr2017stochastic,imamura2019q,imamura2019fluctuations,vetHo2021asymptotic}. These asymptotics crucially rely on the exact solvability of the model, which derives from its connection to Macdonald processes \cite{borodin2014macdonald}.

An inhomogeneous version of $q$-TASEP, where the $i\tth$ particle has jump rate $a_i(1-q^{\gap})$ for some fixed positive real parameters $a_1,a_2,\ldots$, was introduced simultaneously in \cite{borodin2014macdonald}. Such inhomogeneities often yield different asymptotic behaviors: for instance, \cite{barraquand2015phase} showed that by tuning the $a_i$ correctly, one may see the Baik-Ben Arous-Peche distributions in the limit, generalizing Tracy-Widom asymptotics established in \cite{ferrari2015tracy}.

In this work, we introduce and analyze an integrable particle system, which may be viewed as a $q$-TASEP with inhomogeneous jump rates, which depend not only on the particles but on their positions. Explicitly, the $i\tth$ particle from the right, which sits at $x_i$, has jump rate $t^{x_i+i}(1-t^{\gap})$ for a parameter\footnote{We have changed notation from usual $q$-TASEP because this $t$ corresponds to the $t$ in Hall-Littlewood polynomials, see below.} $t \in (0,1)$. Hence each particle has a base jump rate $t^i$ which is slower for particles further behind the leading particle, but also has a position-dependent slowing $t^{x_i}$ which causes it to slow down as it moves further to the right. We refer to this system as \emph{slowed $t$-TASEP}. Like $q$-TASEP, slowed $t$-TASEP is related to Macdonald processes---specifically, the special case of Hall-Littlewood processes, see next subsection---and from this connection derives certain exact formulas which allow fine asymptotic analysis.

The position-dependent damping means that slowed $t$-TASEP behaves quite differently, as is already apparent with the rightmost particle. In $q$-TASEP, this particle jumps according to a Poisson process with rate $1$, hence has asymptotically $(\text{time})^{1/2}$-order Gaussian fluctuations. By contrast, in slowed $t$-TASEP the rightmost particle's jump rate $t^{x_1}$ decreases by a factor of $t$ each step, so while its position still goes to $\infty$ as $\text{time} \to \infty$, one expects its fluctuations have bounded order, which we show in \Cref{sec:fixed_t} for normalized exponential transforms of particle positions $t^{-(x_1 - \text{const}(\tau))}$ and similarly for other particles. Hence, while particle positions in $q$-TASEP with fixed $q$ exhibit Gaussian fluctuations (for a fixed particle, as above) and Tracy-Widom asymptotics (when particle index is scaled along with time, see \cite{ferrari2015tracy}), such continuous probability distributions will not appear in slowed $t$-TASEP with $t$ fixed.

We therefore study the regime in which $\text{time} \to \infty$ and $t \to 1$ simultaneously, which ameliorates but does not obliterate the position-dependent slowing. Our first asymptotic result is that the position of each particle obeys an explicit law of large numbers in this regime. 

\begin{restatable}{theorem}{llnintro}\label{thm:LLN_intro}
Let $(x_1(T),x_2(T),\ldots)$ be the particle positions in slowed $t$-TASEP after time $T$, with $t=e^{-\eps}$ and packed initial condition $x_k(0) = -k$. Then for any $\tau > 0$ and $k \in \Z_{>0}$,
\begin{equation*}
    \eps \cdot x_k(\tau/\eps) \to \log \left(\sum_{j=0}^k \frac{\tau^j}{j!}\right) - \log \left(\sum_{j=0}^{k-1} \frac{\tau^j}{j!} \right) \quad \quad \text{in probability as $\eps \to 0^+$.}
\end{equation*}
\end{restatable}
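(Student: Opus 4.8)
The plan is: reduce the theorem to the convergence of bounded ``$t$-moments'' $\E[(t^m)^{x_k(\tau/\eps)}]$; compute these exactly using the Hall--Littlewood process underlying slowed $t$-TASEP; and carry out the $\eps\to0$ asymptotics of the resulting contour integrals, from which the limit is read off by a residue computation. Writing $e_k(\tau):=\sum_{j=0}^{k}\tau^j/j!$, and noting that $t=e^{-\eps}$ together with $x_k(T)\ge -k$ (particles only move right) forces $t^{x_k(T)}\in(0,t^{-k}]$, the family $\{t^{x_k(\tau/\eps)}\}_\eps$ is uniformly bounded; since $\eps\,x_k(\tau/\eps)=-\log t^{x_k(\tau/\eps)}$, the theorem is equivalent to $t^{x_k(\tau/\eps)}\to e_{k-1}(\tau)/e_k(\tau)$ in probability for each $k\ge1$, and because the target is deterministic it is enough to prove $\E[(t^m)^{x_k(\tau/\eps)}]\to\big(e_{k-1}(\tau)/e_k(\tau)\big)^m$ for $m=1,2$ (so that $\var(t^{x_k(\tau/\eps)})\to0$); the computation below in fact gives all $m\ge1$. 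As an orienting case, the rightmost particle by itself is the pure-birth chain with rate $t^{j+1}$ at site $j$, and solving its Kolmogorov equations gives $\E[(t^m)^{x_1(T)}]=t^{-m}\sum_{n\ge0}\frac{(-T)^n}{n!}(t^m;t)_n$; with $t=e^{-\eps}$, $T=\tau/\eps$, the factor $(t^{m+i}-1)\sim-\eps(m+i)$ makes the $n$-th summand tend to $\binom{m+n-1}{n}(-\tau)^n$, so $\E[(t^m)^{x_1(\tau/\eps)}]\to(1+\tau)^{-m}=\big(e_0(\tau)/e_1(\tau)\big)^m$, as wanted.

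For general $k$ I would invoke the correspondence, established earlier in the paper, between slowed $t$-TASEP and a Hall--Littlewood process: $(x_1(T),\ldots,x_k(T))=(\lambda_1(T)-1,\ldots,\lambda_k(T)-k)$ for a random partition $\lambda(T)$ under an explicit Hall--Littlewood measure (consistently in $k$), so that $t^{x_k(T)}=t^{-k}\cdot t^{\lambda_k(T)}$ with $\lambda_k(T)$ the $k$-th part of that partition. Feeding this into the contour-integral formulas for Hall--Littlewood observables recalled there --- the $q=0$ degeneration of the Macdonald machinery, i.e.\ the analog of Borodin--Corwin's $q$-moment formulas --- gives an $m$-fold nested contour-integral representation, schematically of the form
$$\E\big[(t^m)^{\lambda_k(T)}\big]=\frac{c_{m,t}}{(2\pi i)^m}\oint\!\cdots\!\oint\ \prod_{1\le a<b\le m}\frac{z_a-z_b}{z_a-tz_b}\ \prod_{a=1}^{m}\Phi_t(z_a)^{k}\,\Psi_t(z_a)^{T}\,\frac{dz_a}{z_a},$$
over suitably nested positively oriented contours, where $c_{m,t},\Phi_t,\Psi_t$ are the explicit elementary functions produced by the two specializations, the number of particles $k$ enters as the exponent of $\Phi_t$, and all ingredients depend continuously on $t$. (Equivalently, one may avoid quoting the general machinery by integrating the finite, self-closing Kolmogorov hierarchy satisfied by the family $\E[t^{m_1x_1+\cdots+m_kx_k}]$, whose closure is exactly this Hall--Littlewood structure.)

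For the asymptotics I would substitute $t=e^{-\eps}$, $T=\tau/\eps$ and let $\eps\to0^+$ with $m,k,\tau$ fixed. On the contours the Hall--Littlewood cross-factors $\frac{z_a-z_b}{z_a-tz_b}\to1$, the prefactor $c_{m,t}\to1$, $\Phi_t\to\Phi_1$, and --- the scaling-critical estimate --- $\eps^{-1}\log\Psi_t(z)\to\psi_1(z)$ for an explicit $\psi_1$, so that $\Psi_t(z)^{T}=\exp\!\big(\tau\cdot\eps^{-1}\log\Psi_t(z)\big)\to e^{\tau\psi_1(z)}$. Since the only pole responsible for the nesting, at $z_a=tz_b$, merges with the numerator zero at $z_a=z_b$ and becomes removable in the limit, the contours may be collapsed onto one another and the $m$-fold integral factorizes; hence
$$\E\big[(t^m)^{x_k(\tau/\eps)}\big]=t^{-mk}\,\E\big[(t^m)^{\lambda_k(\tau/\eps)}\big]\longrightarrow\left(\frac{1}{2\pi i}\oint\Phi_1(z)^k\,e^{\tau\psi_1(z)}\,\frac{dz}{z}\right)^{\!m},$$
and a residue computation evaluates the single integral to $e_{k-1}(\tau)/e_k(\tau)$, essentially an instance of the elementary identity $\frac{1}{2\pi i}\oint\frac{e^{\tau z}}{z^{k+1}(1-z)}\,dz=\sum_{j=0}^k\tau^j/j!=e_k(\tau)$ (small positively oriented circle around $0$), applied to the numerator and denominator of the ratio expressing this normalized observable. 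Together with the reduction this proves the theorem. As a consistency check, the limiting values $u_k(\tau):=\log\big(e_k(\tau)/e_{k-1}(\tau)\big)$ are exactly the solution of the hydrodynamic ODE hierarchy $u_k'=e^{-u_k}-e^{-u_{k-1}}$, $u_k(0)=0$, $u_0\equiv+\infty$ (using $e_k'=e_{k-1}$), which also furnishes a soft alternative proof: $x_k(T)$ admits a martingale decomposition with instantaneous rate $t^{x_k+k}-t^{x_{k-1}+k-1}\in[0,1]$, so $\eps M_k(\tau/\eps)\to0$ in $L^2$ and the ODE follows by tightness of $\sigma\mapsto\eps\,x_k(\sigma/\eps)$ and Gronwall.

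The main obstacle is the asymptotic analysis of the nested contour integral: choosing contours that are genuinely nested for $t<1$, that degenerate to fixed curves as $t\to1^-$ with no pole crossing the boundary, and along which $\eps^{-1}\log\Psi_t$ converges uniformly with sufficient decay to localize the residue --- and then checking that the residue is precisely $e_{k-1}(\tau)/e_k(\tau)$ rather than some other rational function of $\tau$. (On the soft route the only work is the standard tightness/uniqueness bookkeeping for the limiting ODE.) The other place where genuine content sits is establishing the clean form of the $t$-moment formula, i.e.\ pinning down the precise Hall--Littlewood specialization attached to slowed $t$-TASEP and evaluating its Cauchy kernel; granted these, the passage to $\var\to0$ and hence to convergence in probability is routine.
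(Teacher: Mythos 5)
Your high-level strategy (reduce to bounded $t$-moments, Hall--Littlewood correspondence, contour integrals, $\eps\to0$ asymptotics) resembles the paper's, the Chebyshev reduction to $m=1,2$ is sound, and the $k=1$ pure-birth computation checks out. But the engine for general $k$ is a formula that does not exist in the form you wrote. You posit an $m$-fold, Borodin--Corwin-style fully nested contour representation for the single-part observable $\E[(t^m)^{\lambda_k}]$, citing ``the Hall--Littlewood analog of the $q$-moment formulas.'' The formula the paper actually has (\Cref{thm:observable_formula}, from Bufetov--Matveev) is for the \emph{column-sum} observable $\E\bigl[t^{-\sum_{m}\sum_{j\le r_m}\lambda_j'}\bigr]$; taking a single block of size $k$ gives $\E\bigl[t^{-(\lambda_1'+\cdots+\lambda_k')}\bigr]$, a $k$-fold integral with a Vandermonde \emph{squared}, not $\E\bigl[t^{-m\lambda_k'}\bigr]$. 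In the paper's notation $Y_r := t^{-(\lambda_1'+\cdots+\lambda_r')}$ your quantity is $\E[(Y_{k-1}/Y_k)^m]$, a \emph{ratio}, which is out of reach of any formula in the paper. So the ``main obstacle'' you identify is not the steepest-descent step but the exact formula itself; and since your claimed integrand has a different pole structure (fully nested across all $m$ variables) from the one that actually appears (Vandermonde-squared within blocks, cross-factor nesting only between blocks), your contour-collapsing argument would not transfer even if a single-part formula were available.

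The paper sidesteps both issues. It shows $Y_r \to e_r(\tau):=\sum_{j=0}^r\tau^j/j!$ in probability with no steepest descent at all: $\E[Y_r(t)] = \sum_{j=0}^r\frac{\tau^j}{j!}\,t^{-j}$ \emph{exactly}, by placing all contours on the unit circle, recognizing the $r$-fold integral as a unitary-group integral against $|\Delta|^2$, and expanding $e^{\tau\sum z_s}$ and $\prod(1+t^{-1}\bar z_s)$ in Schur polynomials (Pieri rule plus orthogonality); and $\var(Y_r)\to 0$ is read off directly from the two-block formula because the cross-factor $\prod\frac{1-z_{j,2}/z_{i,1}}{1-t^{-1}z_{j,2}/z_{i,1}} - 1 \to 0$ uniformly on fixed compact contours as $t\to1$. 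Then $\eps\lambda_k' = \log Y_k - \log Y_{k-1} + o_{\mathbb{P}}(1)$ gives the theorem, after absorbing the harmless time-change between $\tau/\eps$ and $\tau/(1-t)$. The parenthetical soft route you sketch (martingale decomposition, $\langle M_k\rangle_T\le T$ so $\eps M_k(\tau/\eps)\to 0$ in $L^2$, Lipschitz tightness of $\sigma\mapsto \eps x_k(\sigma/\eps)$, induction on $k$ to identify subsequential limits with solutions of $c_k'=e^{-c_k}-e^{-c_{k-1}}$, $c_0\equiv\infty$) is a genuinely different and viable approach that avoids the missing formula entirely, but as written it is only an outline and would need the inductive identification spelled out.
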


In particular, particles become macroscopically far apart in the limit. In simulations with fixed $t \approx 1$, one may observe the first particle `peeling off' from the bulk while the second particle barely moves at all due to the $1-t^{\gap}$ component of its jump rate until the gap becomes large. Then the second particle `peels off', and once it is far away the third begins to move nontrivially, etc.

However, since particles affect those behind them due to this $1-t^{\gap}$ factor in the jump rates, despite the macroscopic separation they continue to influence one another at the level of fluctuations. In the scaling of time and $t$ of \Cref{thm:LLN_intro}, we have that the rescaled fluctuations
\[
\eps^{1/2} \left(x_k(\tau/\eps) - \E\left[x_k(\tau/\eps)\right]\right)
\]
converge to Gaussians $X^{(k)}_\tau$ with nontrivial covariances determined by an $(r+s)$-fold contour integral formula for 
\[
\Cov(X^{(1)}_\tau+\ldots+X^{(r)}_\tau,X^{(1)}_\tau+\ldots+X^{(s)}_\tau),
\]
see \Cref{thm:gaussianity}. These limiting covariances still depend on $\tau$, but converge without rescaling as $\tau \to \infty$, a manifestation of the believed convergence to a stationary distribution in the prelimit particle system which is discussed in \Cref{sec:fixed_t}.

\begin{theorem}\label{thm:auxlimit_intro}
As $\tau \to  \infty$, the random variables $X^{(i)}_\tau$ converge in distribution to the fixed-time marginal of the unique stationary solution $(Z^{(1)}_T,Z^{(2)}_T,\ldots)$ to the system
\begin{equation}\label{eq:SDEs_intro}
    dZ^{(k)}_T = \left((k-1)Z^{(k-1)}_T - k Z^{(k)}_T\right)dT + dW^{(k)}_T \quad \quad k=1,2,\ldots
\end{equation}
where $W^{(k)}_T$ are independent standard Brownian motions. Their covariances furthermore have the explicit form 
\begin{equation*}
    \Cov(Z^{(r)}_T,Z^{(s)}_T) = \frac{1}{4 \pi^2} \displaystyle \oint_{\Gamma_0} \oint_{\Gamma_{0,w}} \frac{w}{z-w} \frac{r! s!}{z^{r}w^{s}}e^{z+w}(1-z/r)(1-w/s)\frac{dz}{z} \frac{dw}{w}
\end{equation*}
with the $w$-contour enclosing $0$ and enclosed by the $z$-contour.
\end{theorem}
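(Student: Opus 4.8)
The plan is to exploit Gaussianity throughout and reduce everything to a computation of second moments. Since \Cref{thm:gaussianity} gives that the $X^{(i)}_\tau$ are jointly centered Gaussian, the first assertion---convergence in distribution of $(X^{(1)}_\tau,\dots,X^{(n)}_\tau)$ as $\tau\to\infty$---is equivalent to convergence of each covariance $\Cov(X^{(r)}_\tau,X^{(s)}_\tau)$, and the identification of the limit with the marginal of the stationary SDE solution then amounts to the identity $\lim_{\tau\to\infty}\Cov(X^{(r)}_\tau,X^{(s)}_\tau)=\Cov(Z^{(r)}_T,Z^{(s)}_T)$ together with the explicit formula. One recovers $\Cov(X^{(r)}_\tau,X^{(s)}_\tau)$ from the partial-sum covariances $C_{r,s}(\tau):=\Cov(X^{(1)}_\tau+\dots+X^{(r)}_\tau,\,X^{(1)}_\tau+\dots+X^{(s)}_\tau)$ of \Cref{thm:gaussianity} by the mixed second difference $C_{r,s}-C_{r-1,s}-C_{r,s-1}+C_{r-1,s-1}$ (with $C_{0,\cdot}=C_{\cdot,0}=0$). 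So the whole theorem reduces to two tasks: (i) construct and characterize the stationary solution of \eqref{eq:SDEs_intro} and compute its covariances; (ii) extract the $\tau\to\infty$ asymptotics of the contour integral for $C_{r,s}(\tau)$ and match.

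For (i) I would argue recursively, using that \eqref{eq:SDEs_intro} is lower bidiagonal---the equation for $Z^{(k)}$ involves only $Z^{(k-1)}$ and $Z^{(k)}$. The coordinate $Z^{(1)}$ solves $dZ^{(1)}_T=-Z^{(1)}_T\,dT+dW^{(1)}_T$, an Ornstein--Uhlenbeck equation with stable drift, whose unique stationary law is the centered Gaussian of variance $\tfrac12$, realized by $Z^{(1)}_T=\int_{-\infty}^{T}e^{-(T-s)}\,dW^{(1)}_s$. Inductively, given the stationary $Z^{(k-1)}$, the equation for $Z^{(k)}$ is linear with stable drift $-kZ^{(k)}$ and forcing $(k-1)Z^{(k-1)}_T\,dT+dW^{(k)}_T$, hence has the unique stationary solution $Z^{(k)}_T=\int_{-\infty}^{T}e^{-k(T-s)}\big((k-1)Z^{(k-1)}_s\,ds+dW^{(k)}_s\big)$; jointly $(Z^{(1)},Z^{(2)},\dots)$ is then stationary and Gaussian with mean zero. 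Applying It\^o's formula to $Z^{(r)}_T Z^{(s)}_T$ (using that the $W^{(k)}$ are independent, so $d[Z^{(r)},Z^{(s)}]_T=\bbone_{r=s}\,dT$) and imposing stationarity yields the Lyapunov recursion
\begin{equation*}
    (r+s)\,\Cov(Z^{(r)}_T,Z^{(s)}_T)=(r-1)\,\Cov(Z^{(r-1)}_T,Z^{(s)}_T)+(s-1)\,\Cov(Z^{(r)}_T,Z^{(s-1)}_T)+\bbone_{r=s},
\end{equation*}
with the convention $\Cov(Z^{(0)}_T,\cdot)=0$, which determines all covariances uniquely by induction on $r+s$.

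For (ii) I would start from the explicit $(r+s)$-fold contour integral for $C_{r,s}(\tau)$ furnished by \Cref{thm:gaussianity}, in which the time $\tau$ enters only through exponential (or degenerating Hall--Littlewood-type) factors. On contours---possibly $\tau$-dependent, subsequently straightened---chosen so that the relevant exponents have the correct sign, one integrates out the $z$- and $w$-variables by residues and passes to the limit $\tau\to\infty$ by dominated convergence, carefully accounting for any residues crossed while deforming contours across poles; then one takes the mixed second difference in $(r,s)$. This should produce exactly the double contour integral of the statement as the value of $\lim_{\tau\to\infty}\Cov(X^{(r)}_\tau,X^{(s)}_\tau)$ (and, undoing the second difference, the corresponding limit of $C_{r,s}(\tau)$). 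It then remains to verify that this double integral satisfies the Lyapunov recursion of (i) with the correct boundary values: expand $\tfrac{w}{z-w}$ as a geometric series compatible with the prescribed contour nesting, evaluate both integrals as coefficient extractions of $e^{z}(1-z/r)$ and $e^{w}(1-w/s)$ to obtain an explicit finite double sum, and check that it obeys $(r+s)(\cdot)=(r-1)(\cdot)+(s-1)(\cdot)+\bbone_{r=s}$. By the uniqueness in (i), the limit equals $\Cov(Z^{(r)}_T,Z^{(s)}_T)$, which proves both assertions simultaneously.

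The main obstacle is the asymptotic analysis of the multi-fold contour integral in (ii): selecting contours on which the $\tau$-dependent factors are uniformly controlled, justifying the collapse to a single double integral and the interchange of limit and integration, and correctly bookkeeping the residue contributions picked up during contour deformation. The other ingredients---the Gaussian reduction and the recursive construction of the stationary SDE solution---are routine, though some care is needed to match the indexing and normalization coming out of the Hall--Littlewood formula of \Cref{thm:gaussianity} with those of the $Z$-process, so that the prescribed (asymmetric-looking) contour nesting in $\tfrac{w}{z-w}$ is reconciled with the symmetry $\Cov(Z^{(r)}_T,Z^{(s)}_T)=\Cov(Z^{(s)}_T,Z^{(r)}_T)$.
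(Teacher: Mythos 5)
Your overall strategy is the same as the paper's: (1) reduce to covariances by Gaussianity, (2) show the double contour integral is the limit of the prelimit covariances, (3) characterize the stationary SDE covariances via a Lyapunov recursion, and (4) match. Your Lyapunov recursion
\[
(r+s)\,\Cov(Z^{(r)},Z^{(s)})=(r-1)\,\Cov(Z^{(r-1)},Z^{(s)})+(s-1)\,\Cov(Z^{(r)},Z^{(s-1)})+\bbone_{r=s}
\]
is exactly the paper's stationary condition \eqref{eq:cov_ODE}, and your explicit Duhamel construction of the stationary solution is a cleaner route to existence/uniqueness than the paper's appeal to off-the-shelf SDE theory plus a separate stationarity check, so that part is fine (and arguably a small improvement). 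The second difference $C_{r,s}-C_{r-1,s}-C_{r,s-1}+C_{r-1,s-1}$ is likewise correct and appears in the paper.

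The genuine gap is in step (2): the reduction of the $(r+s)$-fold contour integral for $C_{r,s}(\tau)$ to a \emph{double} contour integral. You describe this as ``integrating out the $z$- and $w$-variables by residues'' and ``bookkeeping residue contributions picked up during contour deformation,'' but the collapse does not happen by deforming contours across poles, and it is not achieved by residues. What the paper does instead is: after the rescaling $z\mapsto \tau z$, $w\mapsto \tau w$, the $\tau\to\infty$ limit is trivial (it just removes factors $1+z/\tau$); the real work is to observe that, by symmetry of the integrand, the cross-term $\sum_{i,j}\frac{z_{j,2}}{z_{i,1}-z_{j,2}}$ can be replaced by $rs\,\frac{z_{1,2}}{z_{1,1}-z_{1,2}}$, and then the inner $(r-1)$-fold and $(s-1)$-fold integrals over the remaining variables are exactly the $1$-point correlation functions $\rho_r,\rho_s$ of an orthogonal polynomial ensemble with weight $\Delta^2\prod e^{z_i}/z_i^{r+1}$. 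These orthogonal polynomials are identified explicitly as Laguerre-type polynomials via \cite[Lemma 5.3]{borodin2018anisotropic}, and a Laurent-series analysis of $\rho_r(z)$, together with the contour nesting $|w|<|z|$, tells you which terms contribute. This orthogonal-polynomial recognition is the key idea that makes the reduction possible, and your sketch does not contain it; without it, ``collapse to a double integral'' is asserted rather than derived. Verifying that the resulting double integral satisfies the Lyapunov recursion (your final step) is also part of the paper's proof (its Lemmas \ref{thm:integral_is_0}, \ref{thm:integral_is_1}), but that only closes the argument once the limit of the prelimit covariances has actually been identified with the double integral.
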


In addition to reflecting prelimit convergence to stationarity, \Cref{thm:auxlimit_intro} yields a $2$-fold rather than $(r+s)$-fold contour integral formula, which allows analysis in the bulk regime $r,s \to \infty$. The proof of this reduction of covariance formulas is by orthogonal polynomial methods inspired by the similar arguments of \cite[\S 5.1]{borodin2018anisotropic}, though the interpretation as a stationary solution to a system of SDEs is not present there.

A natural way to study the bulk limit of slowed $t$-TASEP is to string the $Z^{(k)}_0$, which represent asymptotic fluctuations of particle positions, together into a stochastic process $Y_T, T \in \R_{>0}$ by linear interpolation. Explicitly, set $Y_0=0$, $Y_T = Z^{(T)}_0$ when $T \in \Z_{>0}$, and linearly interpolate times between these, see \Cref{fig:Ygraph}. The bulk limit is then encoded by the scaling limit of $Y_T$ for large $T$, which we explicitly compute by taking asymptotics of the covariance formula in \Cref{thm:auxlimit_intro} via steepest descent. 

\bigskip

\begin{figure}[htbp]
\centering
\includegraphics{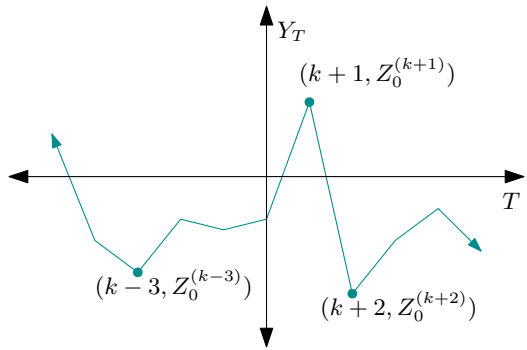}
\caption{The graph of the process $Y_T$, which is just a piecewise linear interpolation from the (random) points $(T,Z^{(T)}_0)$, shown in a window around $T=k$. 
}\label{fig:Ygraph}
\end{figure}

\bigskip

\begin{restatable}{theorem}{bulkintro}\label{thm:bulk_intro}
The process 
\[
R^{(T)}_s := T^{1/4} Y_{T+s \sqrt{T}}
\]
converges in finite-dimensional distributions as $T \to \infty$ to the unique stationary Gaussian process $R_s, s \in \R$ with covariances
\begin{equation*}
    \Cov(R_{a}, R_{b}) =\int_0^\infty y^2 e^{-y^2-|b-a|y}dy.
\end{equation*}
\end{restatable}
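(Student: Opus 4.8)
The plan is to exploit Gaussianity to reduce the claim to the convergence of a single covariance, and then to extract that limit by a saddle-point analysis of the double contour integral in \Cref{thm:auxlimit_intro}.

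Since $(Z^{(k)}_0)_{k\ge1}$ is the fixed-time marginal of a stationary solution of the linear SDE system in \Cref{thm:auxlimit_intro}, it is a centered, jointly Gaussian family, and since each $Y_T$ is a fixed convex combination of at most two of the $Z^{(k)}_0$, the process $(Y_T)_{T>0}$ -- hence also $(R^{(T)}_s)_{s\in\R}$ -- is a centered Gaussian process. Because convergence in distribution of centered Gaussian vectors is equivalent to convergence of their covariance matrices, it suffices to prove that for all $a,b\in\R$
\[
T^{1/2}\,\Cov\!\left(Y_{T+a\sqrt T},\, Y_{T+b\sqrt T}\right)\ \longrightarrow\ \int_0^\infty y^2 e^{-y^2-|b-a|y}\,dy\ =:\ c(a,b)\qquad (T\to\infty).
\]
The right-hand side is a pointwise limit of the positive-semidefinite kernels $(a,b)\mapsto T^{1/2}\Cov(Y_{T+a\sqrt T},Y_{T+b\sqrt T})$, hence positive semidefinite, and it depends on $(a,b)$ only through $b-a$; thus there is a stationary centered Gaussian process $(R_s)_{s\in\R}$ with covariance $c$, unique in law, and the display above yields the claimed finite-dimensional convergence $R^{(T)}\to R$.

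To reduce to lattice points, write $Y_{T+a\sqrt T}$ as a convex combination of $Z^{(r)}_0$ and $Z^{(r+1)}_0$ with $r=\lfloor T+a\sqrt T\rfloor$, and similarly $Y_{T+b\sqrt T}$ with $s=\lfloor T+b\sqrt T\rfloor$; bilinearity then makes $T^{1/2}\Cov(Y_{T+a\sqrt T},Y_{T+b\sqrt T})$ a convex combination of the four numbers $T^{1/2}\Cov(Z^{(r')}_0,Z^{(s')}_0)$, $r'\in\{r,r+1\}$, $s'\in\{s,s+1\}$, all of whose index pairs satisfy $(r'-T)/\sqrt T\to a$ and $(s'-T)/\sqrt T\to b$. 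So it suffices to show $T^{1/2}\Cov(Z^{(r_T)}_0,Z^{(s_T)}_0)\to c(a,b)$ for any integer sequences with $(r_T-T)/\sqrt T\to a$, $(s_T-T)/\sqrt T\to b$. Using the symmetry $\Cov(Z^{(r)}_0,Z^{(s)}_0)=\Cov(Z^{(s)}_0,Z^{(r)}_0)$ we may assume $a\ge b$, hence $r_T\ge s_T$. Now invoke \Cref{thm:auxlimit_intro} with $(r,s)=(r_T,s_T)$. By Stirling, $\tfrac{r!}{z^r}e^z=\sqrt{2\pi r}\,e^{rf(z/r)}(1+o(1))$ with $f(\zeta)=\zeta-1-\log\zeta$, which has a single saddle at $\zeta=1$ with $f(1)=0$, $f''(1)=1$, and $|e^{rf(1+i\theta)}|=(1+\theta^2)^{-r/2}$ strictly decreasing in $|\theta|$. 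The integrand is meromorphic in $z$ and in $w$ with poles only at $0$, $0$ and $z=w$, so we deform the $z$-contour to the vertical line $\{\Re z=r_T\}$ (closed up far away at negligible cost) and the $w$-contour to $\{\Re w=\min(s_T,\,r_T-1)\}$, which remain nested, stay a bounded distance apart, and each pass to within $O(1)$ through the corresponding saddle, so no pole is crossed.

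Substituting $z=r_T+i\sqrt{r_T}\,u$ and $w=s_T+i\sqrt{s_T}\,v$, applying Stirling, and justifying the exchange of limit and integral by the super-exponentially small tail bounds on $(1+\theta^2)^{-r/2}$ away from $\theta=0$, one finds -- using $1-z/r_T=-iu/\sqrt{r_T}$, $1-w/s_T=-iv/\sqrt{s_T}$ and $\tfrac1{z-w}=\tfrac1{\sqrt T}\big((a-b)+i(u-v)\big)^{-1}(1+o(1))$, after collecting powers of $T$ and the surviving constants --
\[
T^{1/2}\,\Cov\!\left(Z^{(r_T)}_0,\, Z^{(s_T)}_0\right)\ \longrightarrow\ \frac{1}{2\pi}\int_\R\int_\R\frac{u\,v\,e^{-(u^2+v^2)/2}}{(a-b)+i(u-v)}\,du\,dv.
\]
For $\delta:=a-b>0$, write $\big(\delta+i(u-v)\big)^{-1}=\int_0^\infty e^{-y(\delta+i(u-v))}\,dy$, interchange integrals, and use $\int_\R u\,e^{-u^2/2-iyu}\,du=-iy\sqrt{2\pi}\,e^{-y^2/2}$ and $\int_\R v\,e^{-v^2/2+iyv}\,dv=iy\sqrt{2\pi}\,e^{-y^2/2}$; the $u$- and $v$-integrals decouple with product $2\pi y^2 e^{-y^2}$, so the double integral equals $2\pi\int_0^\infty y^2 e^{-y^2-\delta y}\,dy$ and the limit is $\int_0^\infty y^2 e^{-y^2-(a-b)y}\,dy=c(a,b)$; the borderline case $a=b$ follows by keeping the contours a fixed distance apart and interpreting the limiting integral as a principal value (exploiting its oddness under $u\leftrightarrow v$), or via a separate diagonal steepest descent. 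The genuinely delicate step is the saddle-point analysis: because $(1-z/r)(1-w/s)$ vanishes to first order at each saddle, the naive leading term is zero and one is extracting a subleading contribution, so the domination needed to pass the $T\to\infty$ limit inside the contour integral -- uniformly over the window $r_T,s_T=T+O(\sqrt T)$ -- must be set up carefully; this is a routine but somewhat lengthy steepest-descent argument of the type used for asymptotics of related Hall--Littlewood and Macdonald processes.
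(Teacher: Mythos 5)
Your proposal takes essentially the same route as the paper: reduce to convergence of $T^{1/2}\Cov(\zeta_{r_T},\zeta_{s_T})$ via Gaussianity, then extract the limit by a saddle-point analysis of the double contour integral from \Cref{thm:auxlimit_intro} around the double critical point of $\zeta\mapsto\zeta-1-\log\zeta$, and finally evaluate the resulting Gaussian double integral by introducing $\tfrac1\alpha=\int_0^\infty e^{-y\alpha}dy$. The final integral computation matches. The one substantive difference is the contour geometry, and it matters for the case $a=b$. The paper uses compact contours (arcs of near-unit circles with short vertical chords through the saddle, \Cref{fig:contours}) and, crucially, offsets the rescaled $\tw$-contour from the saddle by $k^{-1/2}$, i.e.\ by about $\sqrt{s}$ in the unrescaled $w$-variable. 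This makes $z-w\sim \sqrt{T}\bigl(1+(a-b)+i(u-v)\bigr)$ with a denominator bounded away from $0$ uniformly over $a\ge b$, so a single dominated-convergence step handles all cases including $a=b$ at once, producing the paper's integrand $\tfrac{u(v+i)e^{-u^2/2-(v+i)^2/2}}{1+(a-b)+iu-iv}$. You instead pass both contours through (or within $O(1)$ of) the respective saddles, which when $a>b$ gives the cleaner integrand $\tfrac{uv e^{-(u^2+v^2)/2}}{(a-b)+i(u-v)}$ and works fine, but when $a=b$ your contours are $O(1)$ apart in $z,w$ and the rescaled factor $\tfrac{\sqrt{T}}{1+i\sqrt{T}(u-v)}$ converges only as a distribution, to $\pi\delta(u-v)-i\,\mathrm{PV}\tfrac{1}{u-v}$. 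Your remark that the $a=b$ case is ``a principal value, exploiting oddness'' misidentifies the mechanism: the PV part contributes $0$ by that very oddness, and the entire answer $\int_0^\infty y^2e^{-y^2}dy=\tfrac{\sqrt\pi}{4}$ comes from the $\delta$-function term. Your alternative suggestion of a separate diagonal steepest descent is sound, but it would be worth noting that the uniform $\sqrt{T}$-offset used in the paper is precisely the device that avoids having to treat $a=b$ separately. Two further small points: the claim that the vertical-line contours ``stay a bounded distance apart'' is misleading (for $a>b$ the gap is $\sim(a-b)\sqrt{T}$, which is exactly what makes $\tfrac{w}{z-w}$ of order $\sqrt{T}$); and closing the vertical lines into genuine contours encircling $0$ with negligible contribution from the far arcs is an extra step your sketch should justify, whereas the paper sidesteps it by keeping the contours compact from the start.
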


The scaling exponents in \Cref{thm:bulk_intro} are characteristic of the \emph{Edwards-Wilkinson universality class} in $(1+1)$ dimensions ($1$ spatial dimension plus time), see \cite{seppalainen2010current} for other examples of interacting particle systems in this class. The specific integral form of the covariance is somewhat similar to, but not the same as, covariances for solutions to the $(1+1)$-dimensional additive stochastic heat equation, see for example \cite[\S2.3.2]{hairer2009introduction}. We suspect it may arise from some transform of solutions to this or a similar stochastic PDE, but do not have any results in this direction. However, the fact that the limiting fluctuations are described by a $1$-dimensional Gaussian process of any kind is surprising given the algebraic origins of slowed $t$-TASEP, which we discuss next.

\subsection{Hall-Littlewood processes.}

The main tool in proving the above asymptotic results is an explicit contour integral formula for $t$-moments of the particle positions, see \Cref{thm:observable_formula}. This arises because slowed $t$-TASEP is essentially a reparametrization of a certain \emph{Hall-Littlewood process}, for which such formulas were proven in \cite{bufetov2018hall}. These are a special case of the \emph{Macdonald processes} introduced in \cite{borodin2014macdonald}. 

Relevant definitions and results will be recalled in \Cref{sec:prelim}, but briefly, the (skew) Hall-Littlewood polynomials $P_\l(x_1,\ldots,x_n;t)$ and $Q_{\nu/\l}(x_1,\ldots,x_n;t)$ are a special class of symmetric polynomials in the variables $x_i$, indexed by integer partitions $\l$, which feature an extra parameter $t$ which we take to be in $(0,1)$. The Hall-Littlewood process we consider is a continuous-time, Markovian process $\l(\tau)$ on the set $\Y_n$ of integer partitions of length $\leq n$, determined by 
\begin{equation}\label{eq:hlproc_intro}
    \Pr(\l(\tau_0+\tau)=\nu | \l(\tau_0) = \mu) \propto \left(\lim_{D \to \infty} Q_{\nu/\mu}\left(\underbrace{\tau/D,\ldots,\tau/D}_{D\text{ times}};t\right) \right) \frac{P_\nu(1,t,t^2,\ldots,t^{n-1};t)}{P_\mu(1,t,t^2,\ldots,t^{n-1};t)}
\end{equation}
for $\mu,\nu \in \Y_n$ and initial condition $\l(0) = (0,\ldots,0)$ (the limit of skew $Q_{\nu/\mu}$ is known as a Plancherel specialization, see \Cref{sec:sampling}). The dynamics of \eqref{eq:hlproc_intro} also make sense when the finite geometric progression $1,t,\ldots,t^{n-1}$ is replaced with an infinite one, yielding dynamics on the set of all partitions $\Y$. An integer partition $\l = (\l_1,\l_2,\ldots)$ may be identified with a particle configuration on $\Z$ by taking the position of the $k\tth$ particle from the right to be
\begin{equation}\label{eq:coord_change}
   x_k = \l_k - k 
\end{equation}
for $k \geq 1$. When the conjugate partition $\l'$ evolves according to the above Hall-Littlewood process with $n = \infty$, the particle positions defined in this way evolve according to slowed $t$-TASEP, see \Cref{thm:HL_qTASEP_connection}. 

\subsection{Discussion: Macdonald processes, locality, and dynamics in $(1+1)$ and $(2+1)$ dimensions.}

The dynamics on partitions described above appear naturally as marginals of dynamics on triangular arrays of integers, or \emph{Gelfand-Tsetlin patterns}, which are simply sequences $\l^{(n)} \in \Y_n, n \geq 1$ which \emph{interlace} in the sense that \[\l^{(n+1)}_1 \geq \l^{(n)}_1 \geq \l^{(n+1)}_2 \geq \ldots \geq \l^{(n+1)}_{n+1}.\]
These are often visualized as infinite configurations of particles in the plane by placing a particle at each point $(\l^{(n)}_i, n)$ as in \Cref{fig:particle_array} (middle).

\bigskip

\begin{figure}[htbp]
\centering 
\includegraphics{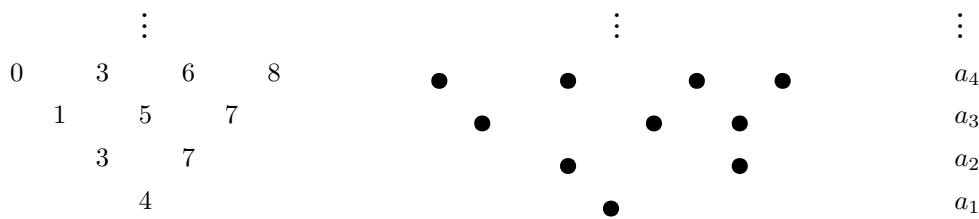}
\caption{The bottom four rows of an infinite Gelfand-Tsetlin pattern, visualized as a sequence of partitions/array of integers (left) and as a particle configuration (middle), with jump rates $a_i$ for the Poisson clocks of each row (right).}\label{fig:particle_array}
\end{figure}

\bigskip

Simple continuous-time dynamics on such arrays, such that the $n\tth$ row evolves by the Hall-Littlewood process dynamics of \eqref{eq:hlproc_intro} for each $n$, were given in \cite[\S6]{borodin2016between}. In such dynamics, the $n\tth$ row has a Poisson clock of rate $a_i$ for every $i$, where in our setting $a_i = t^{i-1}$. When a row's clock rings, one of the particles in that row jumps, specifically the leftmost one whose jump would not violate interlacing with the row below. This jump in turn triggers a particle in the row above to jump by $1$ according to certain rules, which triggers one in the row above that, etc., in such a way that interlacing is preserved and the triggering of moves is local.

At least in special cases, such dynamics also exist for more general \emph{Macdonald processes}, given by replacing the Hall-Littlewood polynomials in \eqref{eq:hlproc_intro} by Macdonald polynomials $P_\l(x_1,\ldots,x_n;q,t)$ which depend on two parameters $q,t \in [0,1)$ and reduce to Hall-Littlewood polynomials when $q=0$. Such dynamics are still local in that particles' jump rates are only affected by the particles corresponding to adjacent entries of the original Gelfand-Tsetlin pattern, and were studied in depth in \cite{borodin2016nearest}. For the Schur ($q=t$) and $q$-Whittaker ($t=0$) cases with Poisson rates $a_i \equiv 1$, it was shown in \cite{borodin2014anisotropic} and \cite{borodin2018anisotropic} that global bulk asymptotics of such arrays are governed by the $2$-dimensional Gaussian free field, which exhibits logarithmic correlations. We mention also the related works \cite{borodin2009anisotropic,borodin2017stochastic} dealing with similar asymptotics of so-called $(2+1)$-dimensional growth models (i.e. growth models in two spatial and one time dimension). This is in marked contrast to the correlations $\Cov(R_s,R_{s+d})$ in the bulk limit \Cref{thm:bulk_intro}, which decay like $\text{const} \cdot |d|^{-3}$ for large $d$ and do not diverge for small $d$.  

Why this difference? \emph{The surprising feature of the Hall-Littlewood case is that not only are the dynamics on $2$-dimensional Gelfand-Tsetlin patterns governed by local interactions, but their projection to a given row of the Gelfand-Tsetlin pattern results in $(1+1)$-dimensional dynamics with only local interactions. }After the transform \eqref{eq:coord_change} this becomes the statement, visible in the definition of slowed $t$-TASEP above, that a particle's jump rate is independent of the other particles except for the one in front of it. Strictly speaking, projecting to a row of the Gelfand-Tsetlin pattern corresponds to the finite $n$ version of \eqref{eq:hlproc_intro}, which does not yield the full slowed $t$-TASEP. However, the $n=\infty$ case of \eqref{eq:hlproc_intro}, which corresponds to the full slowed $t$-TASEP, can interpreted as the projection of Hall-Littlewood process dynamics to a row `at infinity'. A fuller account is given in \cite{vanpeski2021halllittlewood}, but the basic idea is that with the initial condition where every entry of the Gelfand-Tsetlin pattern is $0$, at each time all sufficiently high rows of the Gelfand-Tsetlin pattern will yield the same partition, and the projection of the dynamics to this partition is Markovian and yields the $n=\infty$ case of \eqref{eq:hlproc_intro}. For a precise formulation of this statement in terms of the boundary of a branching graph, see \cite[Appendix A]{vanpeski2021halllittlewood}.

The locality of these dynamics on rows of the Gelfand-Tsetlin pattern is quite special to the Hall-Littlewood case, and does not hold for the general Macdonald dynamics above. Even in the $q=t$ Schur case, which usually is the simplest case of Macdonald processes, a given row of the continuous-time dynamics evolves as $n$ independent Poisson random walks conditioned in the sense of Doob $h$-transform not to intersect for all time. It therefore has highly nonlocal interactions, see \cite{borodin2013markov}. In light of this, it makes sense that the asymptotics of slowed $t$-TASEP are characteristic of $(1+1)$-dimensional growth models, while the asymptotics observed in e.g. \cite{borodin2014anisotropic,borodin2018anisotropic} are characteristic of $(2+1)$-dimensional models. Thus the apparent dissonance between our results and those discussed above is explained by the unusual locality of interactions of Hall-Littlewood processes\footnote{Let us clarify a point of potential confusion, which is that ordinary $q$-TASEP features only local interactions but arises as the projection to the leftmost particles in the array (see \Cref{fig:particle_array}) of the above-mentioned dynamics on $q$-Whittaker processes. The difference is that this projection to a $(1+1)$-dimensional system with local interactions is special and occurs only at the edge of the array, while in our Hall-Littlewood case projecting to any row yields only local interactions. It should in fact not be difficult to obtain the same asymptotics as we do in the bulk for Hall-Littlewood dynamics on Gelfand-Tsetlin patterns by considering the dynamics of \eqref{eq:hlproc_intro} with finite $n$ taken to infinity sufficiently fast along with time, which is very different from the Gaussian free field type asymptotics present in e.g. the $q$-Whittaker case \cite{borodin2018anisotropic}.} with one principal specialization $1,t,\ldots,t^{n-1}$. We mention also that this locality, for a slightly different Hall-Littlewood process, was previously exploited in \cite{van2020limits} in the context of $p$-adic random matrix theory.

\subsection{Outline.} In \Cref{sec:prelim} we introduce the necessary definitions from symmetric functions and Hall-Littlewood processes. In \Cref{sec:sampling} we introduce discrete-time Markovian dynamics on the boundary, and prove that their continuous-time Poisson limit is equivalent to slowed $t$-TASEP. In \Cref{sec:q_moments} we state a contour integral formula for observables of this process. We use these in \Cref{sec:fixed_t} to justify finiteness of fluctuations at fixed $t$, and in \Cref{sec:particle_LLN} to prove the law of large numbers \Cref{thm:LLN_intro} as $t \to 1$. In \Cref{sec:fluctuations} we show Gaussian fluctuations, and the long-time simplification of covariances \Cref{thm:aux_limit_and_nicer_covariances} which is half of \Cref{thm:auxlimit_intro}. The probabilistic justification of this additional limit via the SDEs in \Cref{thm:auxlimit_intro} is shown in \Cref{sec:SDEs}. In \Cref{sec:bulk} we prove the bulk limit to the Gaussian process given in \Cref{thm:bulk_intro}.

\subsection*{Acknowledgements}

 I am deeply grateful to Alexei Borodin for encouragement to study the main object of this work (which is a continuous-time limit of the particle system introduced in \cite{van2020limits}), for invaluable guidance on how to do so, and for feedback on the paper. I also thank Andrew Ahn for helpful conversations regarding analysis of contour integrals, and Ivan Corwin for useful comments. Finally, I thank the anonymous referees for detailed and helpful suggestions. This material is based on work partially supported by an NSF Graduate Research Fellowship under grant \#$1745302$, and by the NSF FRG grant DMS-1664619.

\section{Hall-Littlewood process preliminaries}\label{sec:prelim}

In this section we give basic definitions of symmetric functions, Hall-Littlewood polynomials, Hall-Littlewood processes, and associated Markov evolutions. For a more detailed introduction to symmetric functions see \cite{mac}, and for Macdonald processes see \cite{borodin2014macdonald}.

\subsection{Partitions, symmetric functions, and Hall-Littlewood processes.}

We denote by $\cP$ the set of all integer partitions $(\l_1,\l_2,\ldots)$, i.e. sequences of nonnegative integers $\l_1 \geq \l_2 \geq \cdots$ which are eventually $0$. We call the integers $\l_i$ the \emph{parts} of $\l$, set $\l_i' = \#\{j: \l_j \geq i\}$, and write $m_i(\l) = \#\{j: \l_j = i\} = \l_i'-\l_{i+1}'$. We write $\len(\l)$ for the number of nonzero parts, and denote the set of partitions of length $\leq n$ by $\cP_n$. We write $\mu \prec \l$ or $\l \succ \mu$ if $\l_1 \geq \mu_1 \geq \l_2 \geq \mu_2 \geq \cdots$, and refer to this condition as \emph{interlacing}. Finally, we denote the partition with all parts equal to zero by $\emptyset$.

We denote by $\L_n$ the ring $\C[x_1,\ldots,x_n]^{S_n}$ of symmetric polynomials in $n$ variables $x_1,\ldots,x_n$. It is a very classical fact that the power sum symmetric polynomials $p_k(x_1,\ldots,x_n) = \sum_{i=1}^n x_i^k, k =1,\ldots,n$, are algebraically independent and algebraically generate $\L_n$. For a symmetric polynomial $f$, we will often write $f(\bx)$ for $f(x_1,\ldots,x_n)$ when the number of variables is clear from context. We will also use the shorthand $\bx^\l := x_1^{\l_1} x_2^{\l_2} \cdots x_n^{\l_n}$ for $\l \in \cP_n$. 

One has a chain of maps
\[
\cdots \to \L_{n+1} \to \L_n \to \L_{n-1} \to \cdots \to 0
\]
where the map $\L_{n+1} \to \L_n$ is given by setting $x_{n+1}$ to $0$. 
In fact, writing $\L_n^{(d)}$ for symmetric polynomials in $n$ variables of total degree $d$, one has 
\[
\cdots \to \L_{n+1}^{(d)} \to \L_n^{(d)} \to \L_{n-1}^{(d)} \to \cdots \to 0
\]
with the same maps. The inverse limit $\L^{(d)}$ of these systems may be viewed as symmetric polynomials of degree $d$ in infinitely many variables. From the ring structure on each $\L_n$ one gets a natural ring structure on $\L := \bigoplus_{d \geq 0} \L^{(d)}$, and we call this the \emph{ring of symmetric functions}. 
An equivalent definition is $\Lambda := \C[p_1,p_2,\ldots]$ where $p_i$ are indeterminates; under the natural map $\Lambda \to \Lambda_n$ one has $p_i \mapsto p_i(x_1,\ldots,x_n)$. 

Each ring $\L_n$ has a natural basis $\{p_\l: \l_1 \leq n\}$ where
\begin{equation*}
    p_\l := \prod_{i \geq 1} p_{\l_i}.
\end{equation*}
Another natural basis, with the same index set, is given by the \emph{Hall-Littlewood polynomials}. Recall the $q$-Pochhammer symbol $(a;q)_n := \prod_{i=0}^{n-1} (1-aq^i)$, and define
\begin{equation*}
    v_\l(t) = \prod_{i \in \Z} \frac{(t;t)_{m_i(\l)}}{(1-t)^{m_i(\l)}}.
\end{equation*}

\begin{definition}\label{def:HL}
The Hall-Littlewood polynomial indexed by $\l \in \cP_n$ is
\begin{equation}\label{eq:hlP_formula}
    P_\l(\bx;t) = \frac{1}{v_\l(t)} \sum_{\sigma \in S_n} \sigma\left(\bx^\l \prod_{1 \leq i < j \leq n} \frac{x_i-tx_j}{x_i-x_j}\right)
\end{equation}
where $\sigma$ acts by permuting the variables. We often drop the `$;t$' when clear from context.
\end{definition}

It follows from the definition that $P_\l(x_1,\ldots,x_n,0) = P_\l(x_1,\ldots,x_n)$, hence for each $\l \in \cP$ there is a \emph{Hall-Littlewood symmetric function} $P_\l \in \L$.

\begin{definition}
For $\l \in \Y$, we define the dual Hall-Littlewood polynomial by
\[
Q_\l(\bx;t) = \prod_{i > 0} (t;t)_{m_i(\l)} P_\l(\bx;t).
\]
These similarly are consistent under maps $\L_{n+1} \to \L_n$ and hence define symmetric functions.
\end{definition}


Because the $P_\l$ form a basis for the vector space of symmetric polynomials in $n$ variables, there exist symmetric polynomials $P_{\l/\mu}(x_1,\ldots,x_{n-k};t) \in \L_{n-k}$ indexed by $\l \in \Y_n, \mu \in \Y_k$ which are defined by
\begin{equation*}\label{eq:def_skewP}
    P_\l(x_1,\ldots,x_n;t) = \sum_{\mu \in \Y_k} P_{\l/\mu}(x_{k+1},\ldots,x_n;t) P_\mu(x_1,\ldots,x_k;t).
\end{equation*}
The definition of $Q_{\l/\mu}$ is exactly analogous. As with non-skew Hall-Littlewood polynomials, the skew versions are consistent under the maps $\L_{n+1} \to \L_n$ and hence define symmetric functions in $\L$.

\begin{definition}\label{def:psi_varphi_coefs}
For $\mu,\l \in \Y$ with $\mu \succ \l$, let
\begin{equation*}\label{eq:pbranch}
    \psi_{\mu/\l} :=  \prod_{\substack{i > 0\\ m_i(\l) = m_i(\mu)+1}} (1-t^{m_i(\l)}) 
\end{equation*}
and
\begin{equation*}\label{eq:qbranch}
    \varphi_{\mu/\l} :=  \prod_{\substack{i > 0 \\  m_i(\mu) = m_i(\l)+1}} (1-t^{m_i(\mu)}) 
\end{equation*}
\end{definition}

The following branching rule is standard.

\begin{lemma} \label{thm:branching_formulas}
For $\l,\mu \in \Y$, we have
\begin{equation}\label{eq:skewP_branch_formula}
    P_{\l/\mu}(x_1,\ldots,x_k) = \sum_{\mu = \l^{(1)} \prec \l^{(2)} \prec \cdots \prec \l^{(k)}= \l} \prod_{i=1}^{k-1} x_i^{|\l^{(i+1)}|-|\l^{(i)}|}\psi_{\l^{(i+1)}/\l^{(i)}}
\end{equation}
and
\begin{equation}\label{eq:skewQ_branch_formula}
    Q_{\l/\mu}(x_1,\ldots,x_k) = \sum_{\mu = \l^{(1)} \prec \l^{(2)} \prec \cdots \prec \l^{(k)}=\l} \prod_{i=1}^{k-1} x_i^{|\l^{(i+1)}|-|\l^{(i)}|}\varphi_{\l^{(i+1)}/\l^{(i)}}.
\end{equation}
\end{lemma}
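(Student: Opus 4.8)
The plan is to reduce the $k$-variable formulas \eqref{eq:skewP_branch_formula}--\eqref{eq:skewQ_branch_formula} to the one-variable case by an iterated coproduct argument, and then to establish the one-variable identity, which is where the interlacing constraint and the coefficients $\psi_{\cdot/\cdot}$, $\varphi_{\cdot/\cdot}$ actually arise.

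First I would record the coassociativity of the skew functions. Iterating the defining relation $P_\l(x_1,\ldots,x_n) = \sum_\mu P_{\l/\mu}(x_{k+1},\ldots,x_n)P_\mu(x_1,\ldots,x_k)$ and comparing the two ways of expanding $P_\l$ over three blocks of variables, using that $\{P_\nu : \nu \in \Y_k\}$ is a basis of $\L_k$ so that coefficients may be identified, gives
\[
P_{\l/\mu}(x_1,\ldots,x_n;t) = \sum_{\kappa} P_{\kappa/\mu}(x_1,\ldots,x_j;t)\,P_{\l/\kappa}(x_{j+1},\ldots,x_n;t)
\]
for each $0 \le j \le n$, and the analogous identity for $Q$. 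Iterating this one variable at a time reduces \eqref{eq:skewP_branch_formula} to the single claim that $P_{\l/\mu}(x;t) = \psi_{\l/\mu}\,x^{|\l|-|\mu|}$ when $\mu \prec \l$ and $0$ otherwise (and the $\varphi$-analogue for $Q$); the restriction of the sum to interlacing chains $\mu = \l^{(1)} \prec \cdots \prec \l^{(k)} = \l$ is then exactly the statement that a factor vanishes whenever two successive partitions fail to interlace.

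It remains to prove the one-variable evaluation. Homogeneity and $\deg P_{\l/\mu} = |\l|-|\mu|$ force $P_{\l/\mu}(x;t) = c_{\l/\mu}\,x^{|\l|-|\mu|}$, so the content is (i) that $c_{\l/\mu} = 0$ unless $\l/\mu$ is a horizontal strip, i.e.\ $\mu \prec \l$, and (ii) that $c_{\l/\mu}$ equals the product $\psi_{\l/\mu}$ of \Cref{def:psi_varphi_coefs}. I would get both from the Hall--Littlewood Pieri rule $Q_{(r)}\cdot P_\mu = \sum_\l \varphi_{\l/\mu}\,P_\l$ (sum over $\l$ with $\l/\mu$ a horizontal $r$-strip), which can be extracted from \Cref{def:HL} by symmetrizing and collecting terms: substituting the one-variable values $Q_{(0)} = 1$, $Q_{(r)}(x;t) = (1-t)x^r$ ($r \ge 1$) and matching coefficients of $P_\l$ gives the $Q$-version with constant $\varphi_{\l/\mu}$, and the normalization $P_{\l/\mu} = (b_\mu/b_\l)\,Q_{\l/\mu}$, with $b_\nu = \prod_{i \ge 1}(t;t)_{m_i(\nu)}$, turns $\varphi_{\l/\mu}$ into $\psi_{\l/\mu}$. (Alternatively, one can isolate $P_{\l/\mu}(x_n)$ directly from \eqref{eq:hlP_formula} in $n$ variables by grouping the symmetrization according to which parts of $\l$ the variable $x_n$ carries.)

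The main obstacle is this last point. Establishing the vanishing off horizontal strips and identifying the surviving constant as $\prod(1-t^{m_i})$ requires tracking precisely which part-multiplicities $m_i$ jump by one when a horizontal strip is added --- equivalently, it is the content of the Pieri rule together with the definition of $\psi$ versus $\varphi$. Since this is entirely classical, in the paper I would simply cite \cite[III.5]{mac} for the Pieri rules and the ensuing branching formula rather than carry out the bookkeeping; the sketch above records the route if a self-contained proof were wanted.
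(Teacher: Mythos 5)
The paper states this lemma as standard without proof, so there is no paper proof to compare against directly; your sketch correctly reproduces the standard argument from \cite[III.5]{mac}, which the paper is implicitly invoking. Your route — reduce to one variable via coassociativity, then identify the one-variable skew polynomial $P_{\l/\mu}(x) = \psi_{\l/\mu}x^{|\l|-|\mu|}$ (supported on horizontal strips) via the Pieri rule $P_\mu g_r = \sum_\l \varphi_{\l/\mu}P_\l$ and the normalization $P_{\l/\mu} = (b_\mu/b_\l)Q_{\l/\mu}$ — is exactly the standard derivation, and the details you leave to \cite{mac} (vanishing off horizontal strips, identifying the constant, checking $(b_\mu/b_\l)\varphi_{\l/\mu}=\psi_{\l/\mu}$ by tracking which multiplicities $m_i$ shift) are all present there. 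One small imprecision worth noting: saying you "match coefficients of $P_\l$" glosses over the adjointness step needed to pass from the Pieri rule (a statement about products) to the one-variable skew polynomial (a statement about the coproduct), namely that the coefficient of $Q_\nu$ in $Q_{\l/\mu}$ equals the coefficient of $P_\l$ in $P_\mu P_\nu$; making this explicit would tighten the argument, but since you defer to Macdonald anyway this is not a real gap.
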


Hall-Littlewood polynomials satisfy the \emph{skew Cauchy identity}, upon which most probabilistic constructions rely.

\begin{proposition}\label{thm:finite_cauchy}
Let $\nu, \mu \in \Y$. Then
\begin{multline}\label{eq:finite_cauchy}
    \sum_{\kappa \in \Y} P_{\kappa/\nu}(x_1,\ldots,x_n;t)Q_{\kappa/\mu}(y_1,\ldots,y_m;t) \\
    = \prod_{\substack{1 \leq i \leq n \\ 1 \leq j \leq m}} \frac{1-tx_iy_j}{1-x_iy_j} \sum_{\l \in \Y} Q_{\nu/\l}(y_1,\ldots,y_m;t) P_{\mu/\l}(x_1,\ldots,x_n;t).
\end{multline}
\end{proposition}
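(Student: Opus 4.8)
The plan is to deduce \eqref{eq:finite_cauchy} from the non‑skew Hall--Littlewood Cauchy identity
\[
\sum_{\l \in \Y} P_\l(\bx;t)\,Q_\l(\by;t) \;=\; \prod_{i,j \geq 1} \frac{1-tx_iy_j}{1-x_iy_j} \;=:\; \Pi(\bx;\by),
\]
an identity of formal power series in two infinite alphabets $\bx,\by$ (see \cite[Ch.~III]{mac}), together with the branching rule of \Cref{thm:branching_formulas} and the standard Hopf structure on the ring of symmetric functions $\L$: the comultiplication $\Delta f = \sum f_{(1)} \otimes f_{(2)}$ determined by $f(\bx,\by) = \sum f_{(1)}(\bx)f_{(2)}(\by)$ --- so that $\Delta P_\l = \sum_\mu P_\mu \otimes P_{\l/\mu}$ and $\Delta Q_\l = \sum_\mu Q_\mu \otimes Q_{\l/\mu}$ --- which is adjoint to multiplication with respect to the Hall--Littlewood inner product $\langle\cdot,\cdot\rangle$ of \cite[Ch.~III]{mac}, under which $\{P_\l\}$ and $\{Q_\l\}$ are dual bases. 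Writing $g^\perp$ for the operator on $\L$ adjoint to multiplication by $g$, one has $Q_\mu^\perp P_\l = P_{\l/\mu}$, $P_\mu^\perp Q_\l = Q_{\l/\mu}$, and the Leibniz‑type rule $g^\perp(fh) = \sum (g_{(1)}^\perp f)(g_{(2)}^\perp h)$. Everything below is an identity of formal power series; since $P_{\kappa/\nu}$ and $Q_{\kappa/\mu}$ are homogeneous of degrees $|\kappa|-|\nu|$ and $|\kappa|-|\mu|$, only finitely many $\kappa$ contribute to each graded component, so all sums make sense.

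The first step I would carry out is a ``half‑skew'' Cauchy identity: expanding $\Pi\big((\bx,\bx');\by\big) = \Pi(\bx;\by)\,\Pi(\bx';\by)$ on one hand via the non‑skew identity and the branching expansion $P_\kappa(\bx,\bx') = \sum_\nu P_{\kappa/\nu}(\bx')P_\nu(\bx)$, and on the other hand via $\Pi(\bx;\by) = \sum_\nu P_\nu(\bx)Q_\nu(\by)$, and then comparing coefficients of the linearly independent $P_\nu(\bx)$ yields
\[
\sum_{\kappa} P_{\kappa/\nu}(\bx')\,Q_\kappa(\by) \;=\; Q_\nu(\by)\,\Pi(\bx';\by).
\]
Equivalently, $Q_\nu^\perp$ applied in the $\bx$‑variables carries $\Pi(\bx;\by)$ to $Q_\nu(\by)\,\Pi(\bx;\by)$; by linearity in the subscript and the $\bx\leftrightarrow\by$ symmetry of $\Pi$, for every $g \in \L$ the operator $g^\perp$ applied in either alphabet sends $\Pi$ to $g$ (evaluated at the other alphabet) times $\Pi$.

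For the second step I would apply $P_\mu^\perp$ in the $\by$‑variables to the half‑skew identity (with $\bx'$ renamed $\bx$). Since $P_\mu^\perp Q_\kappa = Q_{\kappa/\mu}$, the left side becomes $\sum_\kappa P_{\kappa/\nu}(\bx)\,Q_{\kappa/\mu}(\by)$. For the right side, the Leibniz rule with $\Delta P_\mu = \sum_\l P_\l \otimes P_{\mu/\l}$, together with $P_\l^\perp Q_\nu = Q_{\nu/\l}$ and the fact (from the previous paragraph, with $g = P_{\mu/\l}$) that $P_{\mu/\l}^\perp$ in the $\by$‑variables sends $\Pi(\bx;\by)$ to $P_{\mu/\l}(\bx)\,\Pi(\bx;\by)$, gives
\[
P_\mu^\perp\big[Q_\nu(\by)\,\Pi(\bx;\by)\big] \;=\; \Pi(\bx;\by)\sum_\l Q_{\nu/\l}(\by)\,P_{\mu/\l}(\bx).
\]
Thus $\sum_\kappa P_{\kappa/\nu}(\bx)Q_{\kappa/\mu}(\by) = \Pi(\bx;\by)\sum_\l Q_{\nu/\l}(\by)P_{\mu/\l}(\bx)$ as symmetric functions; specializing $x_{n+1}=x_{n+2}=\cdots=0$ and $y_{m+1}=y_{m+2}=\cdots=0$ --- under which the skew Hall--Littlewood functions are consistent, as remarked after the definition of $P_{\l/\mu}$ --- turns $\Pi$ into the finite product in \eqref{eq:finite_cauchy} and yields the Proposition.

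I do not expect a genuine obstacle: the entire content lies in the non‑skew Cauchy identity and in the self‑duality of $\L$ under the Hall--Littlewood inner product (the relations $Q_\mu^\perp P_\l = P_{\l/\mu}$, $P_\mu^\perp Q_\l = Q_{\l/\mu}$, and the Leibniz rule), all classical \cite[Ch.~III]{mac}; granting them, the skew version is a formal manipulation, and one could instead simply cite \cite{mac,borodin2014macdonald}. The only delicate point is the bookkeeping in the second step --- tracking which alphabet each $\perp$‑operator acts on --- and checking that the sums converge as formal power series, which is immediate from homogeneity. An alternative self‑contained route is induction on the number of variables via \Cref{thm:branching_formulas} and the single‑variable Hall--Littlewood Pieri rule, but the inner‑product argument is shorter.
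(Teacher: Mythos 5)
The paper states this Proposition without proof: it is a classical identity for Hall--Littlewood (indeed for general Macdonald) polynomials, available in \cite[Ch.~VI, \S7]{mac} and \cite{borodin2014macdonald}, so there is no "paper's own proof" to compare against. Your argument is correct and is the standard Hopf-algebraic route: the non-skew Cauchy identity, the branching rule \Cref{thm:branching_formulas}, the dual pairing under which $P_{\l/\mu}=Q_\mu^\perp P_\l$ and $Q_{\l/\mu}=P_\mu^\perp Q_\l$, and the Leibniz rule for $g^\perp$ coming from adjointness of multiplication and comultiplication. The one step worth spelling out slightly more is that "$g^\perp$ in the $\by$-alphabet multiplies $\Pi$ by $g(\bx)$" requires the $\by$-side half-skew identity $\sum_\kappa P_\kappa(\bx)Q_{\kappa/\mu}(\by)=P_\mu(\bx)\Pi(\bx;\by)$ obtained from $\Pi(\bx;(\by,\by'))=\Pi(\bx;\by)\Pi(\bx;\by')$ with the $Q$-branching, rather than literally by applying "$\bx\leftrightarrow\by$ symmetry" to the $\bx$-side version (since $\Pi$ is symmetric under swapping alphabets only together with swapping the roles of $P$ and $Q$); but this is a cosmetic point and the argument as written is sound. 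Given the paper simply cites the result, you could do the same, as you note.
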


For later convenience we set
\begin{equation}\label{eq:def_cauchy_kernel}
     \Pi(\bx;\by) := \prod_{\substack{1 \leq i \leq n \\ 1 \leq j \leq m}} \frac{1-tx_iy_j}{1-x_iy_j} = \exp\left(\sum_{\ell = 1}^\infty \frac{1-t^\ell}{\ell}p_\ell(\bx)p_\ell(\by)\right).
\end{equation}
The second equality in \eqref{eq:def_cauchy_kernel} is not immediate but is shown in \cite{mac}. The RHS of \eqref{eq:def_cauchy_kernel} makes sense as formal series in $\L \ot \L$, and \Cref{thm:finite_cauchy} generalizes straightforwardly with the skew $P$ and $Q$ functions in $x$ and $y$ replaced by corresponding elements of $\L \ot \L$.

To get a probability measure on $\cP$ from this identity, we would like homomorphisms $\phi: \L \to \C$ which take $P_\l$ and $Q_\l$ to $\R_{\geq 0}$. Such homomorphisms are called \emph{Hall-Littlewood nonnegative specializations} of $\L$. These were classified in \cite{matveev2019macdonald}, proving an earlier conjecture of \cite{gorin2014finite}: all are associated to triples $\{\alpha_n\}_{n \geq 1}, \{\beta_n\}_{n \geq 1}, \gamma$ such that $\gamma \geq 0$, $0 \leq \alpha_n,\beta_n < 1$ for all $ n \geq 1$, and $\sum_n \alpha_n, \sum_n \beta_n < \infty$. Given such a triple, the corresponding specialization is defined by 
\begin{align}\label{eq:p_specs}
\begin{split}
    p_1 &\mapsto \sum_{n \geq 1} \alpha_n + \frac{1}{1-t}\left(\gamma + \sum_{n \geq 1} \beta_n\right) \\
    p_k &\mapsto \sum_{n \geq 1} \alpha_n^k + \frac{(-1)^{k-1}}{1-t^k}\sum_{n \geq 1} \beta_n^k \quad \quad \text{ for all }k \geq 2.
\end{split}
\end{align}

We refer to a specialization as 
\begin{itemize}
    \item \emph{pure alpha} if $\gamma$ and all $\beta_n, n \geq 1$ are $0$.
    \item \emph{pure beta} if $\gamma$ and all $\alpha_n, n \geq 1$ are $0$.
    \item \emph{Plancherel} if all $\alpha_n,\beta_n, n \geq 1$ are $0$.
\end{itemize}

Note that the pure $\alpha$ specializations with finitely many nonzero $\alpha_i$ are given by simply plugging in $\alpha_i$ for the variable $x_i$ of the symmetric polynomial. 

\begin{remark}\label{rmk:spec_notation_and_plancherel}
Given a specialization $\psi$, we will write $P_\l(\psi)$ for $\psi(P_\l)$ and similarly for skew functions and $Q$ functions. In the case of a Plancherel specialization with parameter $\tau$, we write $P_\l(\gamma(\tau))$. In the case of a pure alpha specialization we write $P_\l(a_1,a_2,\ldots)$, which is natural since such a specialization may be seen as plugging in $a_i$'s for the variables as mentioned above. In the case where one variable $a$ is repeated $D$ times, we write $P_\lambda(a[D])$.
\end{remark}

Similarly to \eqref{eq:def_cauchy_kernel}, for any two specializations $\theta,\psi$ we let
\begin{equation}\label{eq:general_cauchy_kernel}
   \Pi(\theta;\psi) := \exp\left(\sum_{\ell = 1}^\infty \frac{1-t^\ell}{\ell}\theta(p_\ell)\psi(p_\ell)\right). 
\end{equation}
For any nonnegative specializations $\theta,\psi$ with 
\begin{equation}\label{eq:finiteness_cauchy}
    \sum_{\l \in \Y}P_\l(\theta)Q_\l(\psi) < \infty,
\end{equation}
the analogue 
\begin{equation}\label{eq:specialized_cauchy}
    \sum_{\kappa \in \Y} P_{\kappa/\nu}(\theta;t)Q_{\kappa/\mu}(\psi;t) \\
    =\Pi(\theta;\psi) \sum_{\l \in \Y} Q_{\nu/\l}(\psi;t) P_{\mu/\l}(\theta;t).
\end{equation}
of \eqref{eq:finite_cauchy} holds. 

The convergence of pure $\alpha$ specializations with small parameters to Plancherel specializations, shown in the following lemma, is standard. The lemma also shows this convergence to be monotonic on Hall-Littlewood polynomials for appropriate $t$, a useful fact for later convergence statements for which we are not aware of a reference.

\begin{lemma}
    \label{thm:alpha_gamma_monotonic}
    For any $\tau \geq 0, t \in [-1,1)$ and $\lambda,\mu \in \Y$, the sequence 
    \begin{equation} \label{eq:planch_sequence}
        P_{\l/\mu}\left(\tfrac{\tau}{1-t} \tfrac{1}{D}[D]\right), D = 1,2,\ldots
    \end{equation}
    is nondecreasing and converges to $P_{\l/\mu}(\gamma(\tau))$, and similarly for $Q$.
\end{lemma}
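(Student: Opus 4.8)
The plan is to handle the two assertions separately; the convergence is soft while the monotonicity is the real content. For convergence, recall that $P_{\l/\mu}$ is homogeneous of degree $n := |\l| - |\mu|$ in $\L = \C[p_1, p_2, \ldots]$, hence a polynomial in $p_1, \dots, p_n$. The pure alpha specialization sending each of $D$ variables to $a_D := \tfrac{\tau}{(1-t)D}$ takes $p_k \mapsto D a_D^{\,k} = \tfrac{\tau^k}{(1-t)^k D^{k-1}}$, which converges as $D \to \infty$ to $\tfrac{\tau}{1-t}$ for $k = 1$ and to $0$ for $k \ge 2$ --- exactly the values $\gamma(\tau)(p_k)$ in \eqref{eq:p_specs}. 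Since evaluating $P_{\l/\mu}$ at a specialization is a continuous (in fact polynomial) function of the images of $p_1, \dots, p_n$, this gives $P_{\l/\mu}\bigl(\tfrac{\tau}{1-t}\tfrac1D[D]\bigr) \to P_{\l/\mu}(\gamma(\tau))$, and the same argument applies verbatim to $Q_{\l/\mu}$.

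For the monotonicity I would expand via the branching rule \eqref{eq:skewP_branch_formula} with all $D$ variables equal to $a_D$. Every chain $\mu = \l^{(0)} \prec \dots \prec \l^{(D)} = \l$ adds the same total number $n$ of boxes, and consecutive equal terms contribute the factor $\psi_{\nu/\nu} = 1$, so grouping the chains by the number $j$ of strict steps yields
\begin{equation*}
P_{\l/\mu}\Bigl(\tfrac{\tau}{1-t}\tfrac1D[D]\Bigr) = \Bigl(\tfrac{\tau}{1-t}\Bigr)^{\!n} \sum_{j=0}^{n} c_j\, \frac{\binom{D}{j}}{D^{n}}, \qquad c_j := \sum_{\mu = \kappa^{(0)} \prec \dots \prec \kappa^{(j)} = \l} \ \prod_{i=1}^{j} \psi_{\kappa^{(i)}/\kappa^{(i-1)}},
\end{equation*}
where the chains defining $c_j$ are now strictly increasing. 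For $t \in [-1,1)$ every factor $1 - t^m$ with $m \ge 1$ appearing in $\psi$ satisfies $1 - t^m \ge 0$, hence each $c_j \ge 0$ (and we may assume $\mu \subseteq \l$, else $P_{\l/\mu} \equiv 0$ and there is nothing to prove). It remains to show that $D \mapsto \sum_{j=0}^{n} c_j \binom{D}{j}/D^{n}$ is nondecreasing. The clean input here is that $\binom{D}{j}/D^{j} = \tfrac1{j!}\prod_{i=0}^{j-1}(1 - i/D)$ is nondecreasing in $D$ (increasing to $1/j!$), which takes care of the top term $j = n$; the lower-order terms carry the extra decreasing factors $D^{-(n-j)}$, and it is controlling their combined contribution, constrained by the combinatorics of the $c_j$, that is the crux. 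The statement for $Q$ follows in the same way from \eqref{eq:skewQ_branch_formula}, the coefficients $\varphi_{\kappa^{(i)}/\kappa^{(i-1)}}$ being products of the same nonnegative factors.

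The step I expect to be the main obstacle is precisely this last one: upgrading the convergence to monotonicity. One natural route is to collect the sum into a single rational function $N(D)/D^{n}$ with $N$ a polynomial in $D$ of degree $\le n$ and $N(0) = c_0 \in \{0,1\}$, and to control the sign of the discrete difference $N(D+1)/(D+1)^{n} - N(D)/D^{n}$ in terms of the $c_j$ and the linear identities they satisfy; an alternative is to seek a coupling or Markov-chain interpretation under which the refinement of the $D$-fold pure alpha specialization into the $(D+1)$-fold one is manifestly monotone. Obtaining such an estimate uniformly in $\l, \mu$ and $t \in [-1,1)$ is where the real work would be.
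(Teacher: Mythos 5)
Your convergence argument is correct and matches the paper's. Your setup for monotonicity (branching formula, grouping chains by the number $j$ of strict steps) is also the paper's approach; after correcting the off-by-one in Lemma 2.5 (the chain for $D$ variables should have $D+1$ partitions and $D$ steps), one gets exactly your $\sum_{j} c_j \binom{D}{j}/D^n$. But your instinct that the argument does not close is well founded, in a stronger sense than ``the work remains'': the monotonicity claim of the lemma is false as stated. Take $\l=(2)$, $\mu=\emptyset$, $t=0$, $\tau=1$. Then $P_{(2)}\bigl(\tfrac{1}{D}[D]\bigr)=h_2(\tfrac{1}{D},\ldots,\tfrac{1}{D}) =\binom{D+1}{2}/D^2 = \tfrac{D+1}{2D}$, the strictly decreasing sequence $1,\tfrac34,\tfrac23,\tfrac58,\ldots\to \tfrac12 = P_{(2)}(\gamma(1))$. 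Your own expansion shows the mechanism: when some $c_j>0$ with $j<n:=|\l|-|\mu|$ --- which happens exactly when $\l/\mu$ is not a vertical strip, so a single horizontal-strip step can add more than one box --- the term $c_j\binom{D}{j}/D^{n}$ decays like $D^{j-n}$ and eventually decreases, and nothing forces the remaining terms to compensate.

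For the record, the paper's own proof of term-by-term monotonicity contains a sign error: it replaces $\tfrac{D}{D-|S|+1}$ by $\tfrac{D}{D-(|\l|-|\mu|)}$ as if this were a lower bound, but $|S|\le|\l|-|\mu|+1$ gives $D-|S|+1\ge D-(|\l|-|\mu|)$, so the replacement goes the \emph{wrong} way; the individual $S$-term is nondecreasing only in the boundary case $|S|-1=|\l|-|\mu|$. Because of the indexing slip in Lemma 2.5, the quantity the paper's proof actually manipulates is $P_{\l/\mu}$ evaluated in $D-1$ variables each equal to $\tfrac{\tau}{(1-t)D}$; this shifted sequence does appear to be nondecreasing in small examples, so a corrected statement (and with it the monotone-convergence step in the proof of Proposition 4.1) is likely salvageable --- but it requires an argument beyond the failed term-by-term bound, and you were right to regard this as where the real work lies.
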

\begin{proof}
The fact that $\lim_{D \to \infty} P_{\l/\mu}\left(\tfrac{\tau}{1-t}\tfrac{1}{D}[D]\right) = P_{\l/\mu}(\gamma(\tau))$ is standard, and follows because (a) clearly $p_k(\frac{\tau}{1-t}\tfrac{1}{D}[D]) \to p_k(\gamma(\tau))$ for each $k$, and (b) $P_{\l/\mu}$ is a polynomial in the $p_k$.

Let us now prove the sequence \eqref{eq:planch_sequence} is nondecreasing. Specializing \Cref{thm:branching_formulas} to our case,
\begin{equation*}
    P_{\l/\mu}(\tfrac{\tau}{1-t} \tfrac{1}{D}[D]) = \sum_{\mu = \l^{(1)} \prec \ldots \prec \l^{(D)} = \l} \pfrac{\tau}{D(1-t)}^{|\l| - |\mu|} \prod_{i=1}^{D-1}\psi_{\l^{(i+1)}/\l^{(i)}}.
\end{equation*}
There are many distinct sequences $\mu = \l^{(1)} \prec \ldots \prec \l^{(D)} = \l$ for which the sets $\{\l^{(i)}: 1 \leq i \leq D\}$ are the same but the multiplicities of the partitions in the sequence are different, and we wish to group these together. Hence we collect terms according to the set of distinct partitions appearing:
\begin{equation}
    \label{eq:monotone_branching}
    P_{\l/\mu}(\tfrac{\tau}{1-t}\tfrac{1}{D}[D]) = \sum_{\substack{S \subset \Y }} \sum_{\substack{\mu = \l^{(1)} \prec \ldots \prec \l^{(D)} = \l \\ \{\l^{(1)},\ldots,\l^{(D)}\} = S}} \pfrac{\tau}{(1-t)D}^{|\l| - |\mu|}  \prod_{i=1}^{D-1}\psi_{\l^{(i+1)}/\l^{(i)}}.
\end{equation}
where clearly only sets $S$ of the form $\{\mu^{(1)},\ldots,\mu^{(k)}\}$ with $\mu = \mu^{(1)} \prec \ldots \prec \mu^{(k)} = \l$ contribute. We now fix such an $S$, and claim that the term 
\begin{equation}
    \label{eq:S_term}
    \sum_{\substack{\mu = \l^{(1)} \prec \ldots \prec \l^{(D)} = \l \\ \{\l^{(1)},\ldots,\l^{(D)}\} = S}} \pfrac{\tau}{(1-t)D}^{|\l| - |\mu|}  \prod_{i=1}^{D-1}\psi_{\l^{(i+1)}/\l^{(i)}}
\end{equation}
in \eqref{eq:monotone_branching} is nondecreasing in $D$. Note first that 
\[
\prod_{i=1}^{D-1}\psi_{\l^{(i+1)}/\l^{(i)}}
\]
is the same for all terms in \eqref{eq:S_term}, independent of $D$ (provided $D \geq |S|$ so the sum is non-empty), and that it is nonnegative because $t \in [-1,1]$. The number of summands in \eqref{eq:S_term} is $\binom{D-1}{|S|-1}$. Hence 
\begin{equation}
    \label{eq:S_term_simplified}
    \text{\eqref{eq:S_term}} = \bbone(D \geq |S|) \prod_{i=1}^{|S|} \psi_{\mu^{(i+1)}/\mu^{(i)}} \pfrac{\tau}{(1-t)D}^{|\l| - |\mu|} \binom{D-1}{|S|-1}.
\end{equation}
Since the RHS of \eqref{eq:S_term_simplified} is nonnegative, we need only show it is nondecreasing when $D \geq |S|$, as otherwise it is $0$. The ratio of successive (nonzero) terms is
\begin{align*}
  \frac{\pfrac{\tau}{D+1}^{|\l| - |\mu|} \binom{D}{|S|-1}}{\pfrac{\tau}{(1-t)D}^{|\l| - |\mu|} \binom{D-1}{|S|-1}} &= \pfrac{D}{D+1}^{|\l| - |\mu|} \frac{D}{D-|S|+1} \\
  &\geq \left(1-\frac{1}{D+1}\right)^{|\l| - |\mu|} \frac{D}{D-(|\l|-|\mu|+1)+1} \\
  & \geq \left(1-\frac{|\l| - |\mu|}{D+1}\right) \frac{D}{D-(|\l|-|\mu|)} \\
  & \geq 1.
\end{align*}
In the first inequality we used that $|S| \leq |\l|+1$, as the sizes of the partitions in $S$ must each differ by at least one. In the second we used the elementary inequality $(1-x)^n \geq 1-nx$ for $x \in [0,1], n \geq 1$, which follows by noting equality holds at $x=0$ and the LHS has larger derivative on the interval. This completes the proof.
\end{proof}

We comment that the above result and proof also holds for Macdonald polynomials with $q,t \in (-1,1)$. A final useful fact for us is a simple explicit formula for the Hall-Littlewood polynomials when a geometric progression $u,ut,\ldots,ut^{n-1}$ is substituted for $x_1,\ldots,x_n$---this is often referred as a \emph{principal specialization}. Let 
\begin{equation}
    n(\l) := \sum_{i=1}^n (i-1)\l_i = \sum_{x \geq 1} \binom{\l_x'}{2}.
\end{equation}
The following formula is standard and may be easily derived from \eqref{eq:hlP_formula}, and also follows directly from \cite[Ch. III.2, Ex. 1]{mac}.

\begin{proposition}[Principal specialization formula]\label{thm:hl_principal_formulas}
For $\l \in \Y_n$,
\begin{align*}
    P_\l(u,ut,\ldots,ut^{n-1};t) &= u^{|\l|} t^{n(\l)} \frac{(t;t)_n}{(t;t)_{n-\len(\l)}\prod_{i \geq 1} (t;t)_{m_i(\l)}}.
\end{align*}
\end{proposition}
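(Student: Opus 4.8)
The plan is to compute directly from the explicit formula \eqref{eq:hlP_formula}. Since $P_\l$ is homogeneous of degree $|\l|$, we have $P_\l(u,ut,\ldots,ut^{n-1};t) = u^{|\l|}\,P_\l(1,t,\ldots,t^{n-1};t)$, so I would first reduce to the case $u=1$, i.e.\ substituting $x_i = t^{i-1}$ into \eqref{eq:hlP_formula} (padding $\l$ by zeros to length $n$, so that $m_0(\l) = n - \len(\l)$). Note that for $t\in(0,1)$ the numbers $1,t,\ldots,t^{n-1}$ are pairwise distinct, so every denominator $x_{\sigma(i)} - x_{\sigma(j)}$ occurring in \eqref{eq:hlP_formula} is nonzero and the substitution can be carried out summand by summand with no $0/0$ ambiguity.

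The key step is to show that under this substitution only $\sigma = \Id$ contributes to the sum over $S_n$. If $\sigma \neq \Id$, then $\sigma^{-1}$ has a descent, i.e.\ there is an $m$ with $\sigma^{-1}(m) > \sigma^{-1}(m+1)$; setting $i = \sigma^{-1}(m+1)$ and $j = \sigma^{-1}(m)$ gives a pair $i < j$ with $\sigma(i) = m+1$, $\sigma(j) = m$, and the corresponding factor of $\sigma\bigl(\prod_{k<\ell}\tfrac{x_k - t x_\ell}{x_k - x_\ell}\bigr)$ is $\tfrac{x_{m+1} - t x_m}{x_{m+1} - x_m} = \tfrac{t^m - t\cdot t^{m-1}}{t^m - t^{m-1}} = 0$. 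Hence the entire $\sigma$-term vanishes for every $\sigma \neq \Id$, and $P_\l(1,t,\ldots,t^{n-1};t) = \tfrac{1}{v_\l(t)}\,\sigma\bigl(\bx^\l\prod_{k<\ell}\tfrac{x_k-tx_\ell}{x_k-x_\ell}\bigr)$ evaluated at $\sigma=\Id$.

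It then remains to evaluate the surviving $\Id$-term and the normalization $v_\l(t)$. For $\sigma=\Id$ one has $\bx^\l = \prod_{i=1}^n (t^{i-1})^{\l_i} = t^{n(\l)}$, while factoring $t^{i-1}$ out of numerator and denominator gives $\prod_{1\le i<j\le n}\tfrac{x_i - tx_j}{x_i-x_j} = \prod_{1\le i<j\le n}\tfrac{1-t^{j-i+1}}{1-t^{j-i}}$; grouping the pairs $(i,j)$ by $d=j-i$ (there are $n-d$ of them for each $d=1,\ldots,n-1$) and telescoping yields $\prod_{d=1}^{n-1}\bigl(\tfrac{1-t^{d+1}}{1-t^d}\bigr)^{n-d} = \tfrac{(t;t)_n}{(1-t)^n}$. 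On the other side, using $m_i(\l)=0$ for $i<0$, $m_0(\l) = n-\len(\l)$, and $\sum_{i\ge0}m_i(\l)=n$, one gets $v_\l(t) = (1-t)^{-n}(t;t)_{n-\len(\l)}\prod_{i\ge1}(t;t)_{m_i(\l)}$; dividing and restoring the factor $u^{|\l|}$ produces the claimed identity. I do not anticipate a serious obstacle: the whole argument rests on the vanishing observation above plus two bookkeeping computations, and the only thing one must be careful not to drop is the $i=0$ factor of $v_\l(t)$ coming from the zero parts of $\l$. (Alternatively, one may simply invoke the principal-specialization formula of \cite[Ch.\ III.2, Ex.\ 1]{mac}.)
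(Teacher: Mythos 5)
Your proof is correct, and it is precisely the derivation from \eqref{eq:hlP_formula} that the paper alludes to but does not spell out (the paper simply declares the formula standard and cites \cite[Ch.\ III.2, Ex.\ 1]{mac}). The descent argument isolating the $\sigma=\Id$ term, the telescoping of $\prod_{d=1}^{n-1}\bigl(\tfrac{1-t^{d+1}}{1-t^d}\bigr)^{n-d}$ to $(t;t)_n/(1-t)^n$, and the cancellation of the $(1-t)^{-n}$ factors once the $i=0$ term of $v_\l(t)$ is kept all check out.
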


\subsection{Markov dynamics on partitions and Hall-Littlewood processes.} 

One obtains probability measures on sequences of partitions using \eqref{eq:specialized_cauchy} as follows.

\begin{definition}\label{def:HL_proc}
Let $\theta$ and $\psi_1,\ldots,\psi_k$ be Hall-Littlewood nonnegative specializations such that each pair $\theta,\psi_i$ satisfies \eqref{eq:finiteness_cauchy}. The associated \emph{ascending Hall-Littlewood process} is the probability measure on sequences $\l^{(1)},\ldots,\l^{(k)}$ given by 
\[
\Pr(\l^{(1)},\ldots,\l^{(k)}) = \frac{Q_{\l^{(1)}}(\psi_1) Q_{\l^{(2)}/\l^{(1)}}(\psi_2) \cdots Q_{\l^{(k)}/\l^{(k-1)}}(\psi_k) P_{\l^{(k)}}(\theta)}{\prod_{i=1}^k \Pi(\psi_i;\theta)}.
\]
\end{definition}

The $k=1$ case of \Cref{def:HL_proc} is a measure on partitions, referred to as a \emph{Hall-Littlewood measure}. Instead of defining joint distributions all at once as above, one can define Markov transition kernels on $\cP$. 

\begin{definition}\label{def:cauchy_dynamics}
Let $\theta,\psi$ be Hall-Littlewood nonnegative specializations satisfying \eqref{eq:finiteness_cauchy} and $\l$ be such that $P_\l(\theta) \neq 0$. The associated \emph{Cauchy Markov kernel} is defined by 
\begin{equation}\label{eq:def_HL_cauchy_dynamics}
    \Pr(\l \to \nu) = Q_{\nu/\l}(\psi) \frac{P_\nu(\theta)}{P_\l(\theta) \Pi(\psi; \theta)}.
\end{equation}
\end{definition}

It is clear that the ascending Hall-Littlewood process above is nothing more than the joint distribution of $k$ steps of a Cauchy Markov kernel with specializations $\psi_i,\theta$ at the $i\tth$ step.

\section{Between the slowed $t$-TASEP and Hall-Littlewood processes}\label{sec:sampling}

In this section we formally define slowed $t$-TASEP, and show in \Cref{thm:HL_qTASEP_connection} that it is equivalent (in the case of packed initial condition) to a Hall-Littlewood process with one Plancherel specialization and one principal specialization $1,t,\ldots$.

\begin{definition}
Let 
\[\mathbb{X} := \{(x_1,x_2,\ldots) \in \Z^\N: x_1 > x_2 > \cdots\}\]
be the space of particle configurations on $\Z$, where the $x_i$ is the position of the $i\tth$ particle from the right, and 
\[
\mathbb{X}_0 := \{(x_1,x_2,\ldots) \in \mathbb{X}: x_i = -i \text{ for all sufficiently large }i\}.
\]
We denote particle configurations $(x_1,x_2,\ldots)$ by $\bx$, and if $x_{k-1} > x_k+1$ we write $\bx^k := (x_1,\ldots,x_{k-1},x_k+1,x_{k+1},\ldots)$.
\end{definition}

\begin{definition}\label{def:slow_q-TASEP}
\emph{Slowed $t$-TASEP with initial condition $\bx_0 \in \mathbb{X}$} is the continuous-time stochastic process $\bx_t(\tau) = (x_1(\tau),x_2(\tau),\ldots)$ on $\mathbb{X}$ in which $\bx_t(0)=\bx_0$ and the particles at positions $x_k,k \geq 1$ each have independent Poisson clocks with rates $t^{x_k+k}(1-t^{x_{k-1}-x_k-1})$, and jump to the right by $1$ when they ring. Equivalently, it is defined by the matrix of transition rates
\begin{equation}\label{eq:qTASEP_trans_rates}
    \dde{}{\tau}{\tau=0} \Pr(\bx_t(T+\tau) = \by| \bx_t(T) = \bx) = 
    \begin{cases}
    t^{x_k+k}(1-t^{x_{k-1}-x_k-1}) & \by = \bx^k \text{ for some $k \in \Z_{>0}$} \\
    -1 & \by = \bx \\
    0 & \text{ otherwise}
    \end{cases}.
\end{equation}
We refer to the initial condition $(-1,-2,\ldots)$ as \emph{packed}.
\end{definition}

Recall the notation of Plancherel and infinite alpha specializations from \Cref{rmk:spec_notation_and_plancherel}.

\begin{theorem}\label{thm:HL_qTASEP_connection}
Let $\l(\tau), \tau \geq 0$ be the stochastic process on $\cP$ distributed as an ascending Hall-Littlewood process with one specialization $1,t,\ldots$ and the other Plancherel, i.e. with finite-dimensional marginals
\[
\Pr(\l(\tau_i) = \l^{(i)} \text{ for all }i=1,\ldots,k) =  \frac{\left(\prod_{j=1}^k Q_{\l^{(j)}/\l^{(j-1)}}(\gamma(\tau_j-\tau_{j-1});t) \right) P_{\l^{(k)}}(1,t,\ldots;t)}{\exp\left(\frac{\tau_k}{1-t}\right)}.
\]
Then
\[\bx_t(\tau) = \left(\l_k'((1-t)\tau) - k\right)_{k \geq 1} \]
in (multi-time) distribution, where $\bx_t(\tau)$ is a slowed $t$-TASEP with packed initial condition and parameter $t$.
\end{theorem}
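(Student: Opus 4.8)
The plan is to show that the two continuous-time Markov chains have the same generator after matching up state spaces via the coordinate change $x_k = \l_k' - k$, and that they start from the same initial distribution. Concretely, I would first establish that under the map sending a partition $\l$ to the particle configuration $(\l_k' - k)_{k \geq 1}$, the packed initial condition $\bx_0 = (-1,-2,\ldots)$ corresponds to $\l = \emptyset$, which is the initial state of the Hall-Littlewood process in \Cref{def:HL_proc}. (Note the time change: the Plancherel specialization accumulates at rate $\tau$ in the Hall-Littlewood process but we evaluate at time $(1-t)\tau$, so I should be careful that the Plancherel parameter in the kernel at real time $\tau$ is $\gamma((1-t)\tau)$, matching the $\exp(\tau_k/(1-t))$ normalization after rescaling — this bookkeeping needs to be done explicitly.)

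The core of the argument is to compute the infinitesimal transition rates of the Hall-Littlewood process with a Plancherel specialization. Taking $\theta = (1,t,t^2,\ldots)$ fixed and $\psi = \gamma(\eps)$ with $\eps \to 0^+$, the Cauchy Markov kernel of \Cref{def:cauchy_dynamics} becomes, to first order in $\eps$,
\[
\Pr(\l \to \nu) = Q_{\nu/\l}(\gamma(\eps)) \frac{P_\nu(\theta)}{P_\l(\theta)\Pi(\gamma(\eps);\theta)}.
\]
Since $\Pi(\gamma(\eps);\theta) = \exp(\eps \cdot p_1(\theta)) = \exp(\eps/(1-t))$ and $Q_{\nu/\l}(\gamma(\eps))$ vanishes unless $\nu = \l$ or $\nu$ is obtained from $\l$ by adding a single box (using the branching rule \Cref{thm:branching_formulas} together with the fact that a Plancherel specialization is a $D \to \infty$ limit of a single small variable, so only degree-$\leq 1$ terms survive at order $\eps$), I would extract the rate of the transition $\l \to \nu$ where $\nu/\l$ is a single box in row $k$. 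The relevant skew $Q$ coefficient at first order is $\eps \cdot \varphi_{\nu/\l}$, and the Hall-Littlewood ratio $P_\nu(\theta)/P_\l(\theta)$ is computed from the principal specialization formula \Cref{thm:hl_principal_formulas} (with $n = \infty$, using $(t;t)_\infty$ cancellations). Adding a box to row $k$ of $\l$ — i.e. increasing $\l_k$ by one — changes $\l_k'$ for exactly one value of the column index, namely it moves the particle $x_k = \l_k' - k$ (in the conjugate-partition picture a box added in row $k$ of $\l$ corresponds to a box added in column $k$ of $\l'$, increasing $\l_k'$... here I must be careful about which partition, $\l$ or $\l'$, is evolving; the theorem statement has $\l(\tau)$ evolving and then takes $\l_k'$, so it is $\l'$ that maps to particles). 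I would then verify that the resulting product $\varphi_{\nu/\l} \cdot P_\nu(\theta)/P_\l(\theta)$, after the time rescaling by $(1-t)$, equals exactly $t^{x_k + k}(1 - t^{x_{k-1} - x_k - 1})$, matching \eqref{eq:qTASEP_trans_rates}. The appearance of $1 - t^{\gap}$ should come from a $\psi$ or $\varphi$ branching coefficient being nonzero only when interlacing with the neighboring row permits the box, encoding the exclusion constraint $x_{k-1} > x_k$.

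The main obstacle I anticipate is the precise combinatorial identification in this last step: translating "add a box in row $k$ of $\l$" through conjugation into "particle $k$ jumps right by one in the $\bx$ picture", and then matching the Hall-Littlewood branching/principal-specialization factors term-by-term with the explicit rate $t^{x_k+k}(1-t^{\gap})$. This requires knowing exactly how $m_i(\l)$, $n(\l)$, and $\len(\l)$ change under adding one box, keeping track of the infinite products $(t;t)_\infty$, and confirming that the transition is forbidden (rate zero) precisely when it would violate the exclusion rule — which should fall out of $\varphi_{\nu/\l}$ or the support of $Q_{\nu/\l}(\gamma(\eps))$. Once the one-step rates match and the initial conditions agree, multi-time distributional equality follows from the Markov property and the fact that both processes are determined by their generators (using that the Hall-Littlewood process with a Plancherel specialization is indeed the Poisson-type continuous-time limit of iterated Cauchy kernels, which one can justify via \Cref{thm:alpha_gamma_monotonic} to pass from discrete pure-alpha steps to the Plancherel limit). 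I would also remark that uniqueness of the continuous-time chain with these bounded-per-site rates is unproblematic here since the state space is $\mathbb{X}_0$ and only finitely many particles can move in bounded time.
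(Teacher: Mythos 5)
Your proposal takes essentially the same route as the paper: compute the generator of the Hall--Littlewood Cauchy kernel with a Plancherel specialization by extracting the first-order-in-$\eps$ coefficient of $Q_{\nu/\mu}(\gamma(\eps))P_\nu(\theta)/P_\mu(\theta)\Pi(\gamma(\eps);\theta)$, simplify using the principal specialization formula (\Cref{thm:hl_principal_formulas}), translate through $x_k = \l_k' - k$, and match to \eqref{eq:qTASEP_trans_rates}, finishing with uniqueness of the continuous-time chain from its bounded generator. This is exactly what the paper does in \Cref{thm:explicit_planch_dynamics}.

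One small correction to your anticipated bookkeeping, so you are not misled when you carry it out: the gap factor $1-t^{\gap}$ does \emph{not} come from the branching coefficient $\varphi_{\nu/\mu}$. If $\mu$ and $\nu$ differ by a single box added to the first part of $\mu$ equal to $k$, with $a = m_{k+1}(\mu)$ and $b = m_k(\mu)$, then $\varphi_{\nu/\mu} = 1 - t^{a+1}$ and this cancels exactly against the change in the $(t;t)_{m_{k+1}}$ factor of the principal specialization formula; what survives from the $(t;t)_{m_k}$ factors is $(1-t^b)$, and $b$ equals the gap $x_k - x_{k+1} - 1$ after the conjugation $\l \mapsto \l'$. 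So the exclusion is encoded by $m_k(\mu)$ being zero (no box can be added in column $k+1$ of $\l$), consistent with $1-t^{\gap}=0$, rather than by the support of $\varphi$. You also briefly write that a box added in row $k$ of $\l$ is a box in column $k$ of $\l'$, which is not the right indexing — a box at $(\ell, k+1)$ of $\l$ is at $(k+1, \ell)$ of $\l'$, so it moves particle $x_{k+1}$, not $x_k$ — but you flag this as needing care. Finally, the uniqueness step the paper invokes is the Feller-type criterion for uniformly bounded $Q$-matrices (all diagonal entries equal $-\tfrac{1-t^n}{1-t}$), not \Cref{thm:alpha_gamma_monotonic}; the latter is used elsewhere (in the proof of \Cref{thm:observable_formula}).
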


The proof consists of computing the explicit transition dynamics of $\l(\tau)$ and verifying that they correspond to those of $\bx_t(\tau)$, which are already given explicitly. This is done in the following proposition.

\begin{proposition}\label{thm:explicit_planch_dynamics}
Let $n \in \N \cup \{\infty\}$ and $\l(\tau), \tau \geq 0$ be the stochastic process on $\cP_n$ in continuous time $\tau$ with finite-dimensional marginals given by the Hall-Littlewood process
\[
\Pr(\l(\tau_i) = \l^{(i)} \text{ for all }i=1,\ldots,k) =  \frac{\left(\prod_{j=1}^k Q_{\l^{(j)}/\l^{(j-1)}}(\gamma(\tau_j-\tau_{j-1});t) \right) P_{\l^{(k)}}(1,t,\ldots,t^{n-1};t)}{\exp\left(\frac{\tau_k(1-t^n)}{1-t}\right)}
\]
for each sequence of times $0 \leq \tau_1 \leq \tau_2 \leq \cdots \leq \tau_k$ and $\l^{(1)},\ldots,\l^{(k)} \in \cP_n$, where in the product we take the convention $\tau_0=0$ and $\l^{(0)}$ is the zero partition. Then $\l(\tau)$ has the following explicit description: For each $i$, $\l_i(\tau)$ has an exponential clock of rate $(1-t^n)t^{i-1}$, where if $n=\infty$ we take $t^n=0$. When $\l_i$'s clock rings, $\l_i(\tau)$ increases by $1$, and if this violates the weakly decreasing condition in $(\l_1(\tau),\ldots,\l_n(\tau))$, then the tuple is reordered to again be weakly decreasing. 
\end{proposition}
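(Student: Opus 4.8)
The plan is to extract the infinitesimal transition rates of the Markov process $\l(\tau)$ directly from the Hall-Littlewood process formula and match them against the claimed description. By the Markov property coming from \Cref{def:cauchy_dynamics}, it suffices to compute, for fixed $\mu \in \cP_n$ with $P_\mu(1,t,\ldots,t^{n-1};t) \neq 0$,
\begin{equation*}
    \dde{}{\tau}{\tau=0} \Pr(\l(\tau_0+\tau) = \nu \mid \l(\tau_0) = \mu) = \dde{}{\tau}{\tau=0}\left( Q_{\nu/\mu}(\gamma(\tau);t)\, \frac{P_\nu(1,t,\ldots,t^{n-1};t)}{P_\mu(1,t,\ldots,t^{n-1};t)\exp\left(\frac{\tau(1-t^n)}{1-t}\right)}\right)
\end{equation*}
for $\nu \neq \mu$, and then observe that the diagonal rate is fixed by the requirement that rows sum to $0$. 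The key input is the small-$\tau$ behavior of the skew $Q$ specialized at a Plancherel parameter. Using \Cref{thm:alpha_gamma_monotonic} to write $Q_{\nu/\mu}(\gamma(\tau);t) = \lim_{D\to\infty} Q_{\nu/\mu}\left(\tfrac{\tau}{1-t}\tfrac1D[D];t\right)$ together with the branching rule \eqref{eq:skewQ_branch_formula}, one sees that $Q_{\nu/\mu}(\gamma(\tau);t)$ is a power series in $\tau$ whose lowest-order term has degree $|\nu|-|\mu|$ (each variable $\tfrac{\tau}{(1-t)D}$ contributes one to the total, and there are $|\nu|-|\mu|$ boxes added). Hence the derivative at $\tau = 0$ is nonzero only when $|\nu| = |\mu| + 1$, i.e. exactly when $\nu$ is obtained from $\mu$ by adding a single box; equivalently $\nu \succ \mu$ with $|\nu/\mu| = 1$. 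In that single-box case the branching rule collapses to one term and
\begin{equation*}
    \dde{}{\tau}{\tau=0} Q_{\nu/\mu}(\gamma(\tau);t) = \dde{}{\tau}{\tau=0}\left(\frac{\tau}{1-t}\varphi_{\nu/\mu}\right) = \frac{\varphi_{\nu/\mu}}{1-t},
\end{equation*}
since $\lim_{D\to\infty}\sum_{i} \tfrac{\tau}{(1-t)D} = \tfrac{\tau}{1-t}$ when a single box is distributed among the $D$ variables and $\varphi$ factors out.

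It remains to identify these single-box rates explicitly using the principal specialization formula. Write $\nu = \mu + e_j$ for the partition obtained by increasing the $j$-th part by $1$ (allowed when $\mu_{j-1} > \mu_j$, so that $\nu$ is still a partition); this is the reordering described in the statement. By \Cref{thm:hl_principal_formulas},
\begin{equation*}
    \frac{P_\nu(1,t,\ldots,t^{n-1};t)}{P_\mu(1,t,\ldots,t^{n-1};t)} = t^{n(\nu)-n(\mu)} \cdot \frac{(t;t)_{n-\len(\mu)}}{(t;t)_{n-\len(\nu)}} \cdot \frac{\prod_i (t;t)_{m_i(\mu)}}{\prod_i (t;t)_{m_i(\nu)}},
\end{equation*}
and one computes $n(\nu) - n(\mu) = j-1$ directly from $n(\l) = \sum_i (i-1)\l_i$. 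Multiplying by $\tfrac{\varphi_{\nu/\mu}}{1-t}$ and subtracting the contribution $\tfrac{1-t^n}{1-t}$ from differentiating the exponential (which only affects the diagonal entry), the ratios of $(t;t)$-factors from $\varphi_{\nu/\mu}$, from the $m_i$ products, and from the $\len$ terms must all telescope against one another, leaving precisely $(1-t^n)t^{j-1}$. This is the promised bookkeeping identity: it is the one genuinely computational step, amounting to checking that $\varphi_{\nu/\mu}$ and the multiplicity-factor ratios conspire to cancel, with the $\len$-dependent factor supplying the $(1-t^n)$ when a new part is created (and equalling $1$ otherwise). I would verify it by a short case analysis on whether $\mu_j = 0$ or not, i.e. whether the box lands at the end of the partition or in the interior, since that is where $\len$ and the boundary behavior of $\varphi$ interact.

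Putting these together: the off-diagonal transition rate from $\mu$ to $\mu + e_j$ equals $(1-t^n)t^{j-1}$ whenever $\mu_{j-1} > \mu_j$ and is $0$ otherwise, and all other off-diagonal rates vanish; this is exactly the generator of the process described in the statement, namely each coordinate $\l_i$ carrying an independent exponential clock of rate $(1-t^n)t^{i-1}$, incrementing on ringing, followed by a reordering to restore the weakly decreasing condition. (When $n=\infty$ we interpret $t^n=0$, and the geometric sum $\sum_{i\geq 1}(1-t^n)t^{i-1} = \tfrac{1-t^n}{1-t}$ matches the total rate $\tfrac{1}{1-t}$ read off from the normalizing exponential, confirming the diagonal entry.) The main obstacle is the principal-specialization bookkeeping in the previous paragraph — there is nothing deep in it, but one must be careful with the edge cases where adding a box changes $\len(\mu)$ or merges/splits a cluster of equal parts, since that is exactly where the factor $(1-t^n)$ as opposed to a plain power of $t$ emerges.
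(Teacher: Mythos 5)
Your overall strategy coincides with the paper's: extract the generator from the Cauchy Markov kernel, use the degree-$(|\nu|-|\mu|)$ vanishing of $Q_{\nu/\mu}(\gamma(\tau);t)$ to reduce to single-box steps, identify the derivative $\varphi_{\nu/\mu}/(1-t)$, and plug into \Cref{thm:hl_principal_formulas}. The problem is the ``bookkeeping identity'' you defer to a case analysis: it does not produce the formula you claim. Write $\mu = (\ldots,(k+1)[a],k[b],\ldots)$ with $j$ the smallest index where $\mu_j=k$, and let $\nu$ be $\mu$ with its $j$-th part raised to $k+1$. Then $\varphi_{\nu/\mu} = 1-t^{a+1}$, and the principal specialization ratio gives $P_\nu/P_\mu = t^{\,j-1}\,\dfrac{1-t^b}{1-t^{a+1}}$ (checking both the interior case $k>0$ and the boundary case $k=0$, where the $(t;t)_{n-\len}$ factor supplies $1-t^{n-\len(\mu)}$ which is again $1-t^b$). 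Multiplying yields
\[
B(\mu,\nu) = \frac{t^{\,j-1}(1-t^b)}{1-t} = t^{\,j-1} + t^{\,j} + \cdots + t^{\,j+b-2},
\]
which retains the multiplicity $b=m_k(\mu)$ and does \emph{not} equal $(1-t^n)t^{\,j-1}$. So your asserted telescoping is wrong, and a short case analysis would have revealed it.

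This is not merely an arithmetic slip --- it is the point of the proof. Because of the reordering rule, \emph{every} one of the $b$ clocks attached to the parts equal to $k$ triggers the same transition $\mu \to \nu$, so the transition rate in the claimed process is a sum of $b$ consecutive clock rates (giving exactly the geometric expression above with clock rates $t^{i-1}$), not the rate of a single clock. Your ``sanity check'' inherits the same confusion: you sum $(1-t^n)t^{i-1}$ over all positions $i$, but your candidate $B(\mu,\nu)$ sums only over the admissible $j$'s (one per block of equal parts), so your computed off-diagonal total does not actually equal $\frac{1-t^n}{1-t}$ and does not verify the diagonal entry. Correctly, $\sum_\nu B(\mu,\nu) = \frac{1-t^n}{1-t}$ precisely because each block contributes the geometric chunk $t^{\,j-1}+\cdots+t^{\,j+b-2}$ and the chunks partition $1+t+\cdots+t^{n-1}$. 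Finally, even once the generators agree, one must argue that equality of $Q$-matrices determines the process; the paper invokes Feller's result for bounded generators on a countable state space, and your write-up should do something analogous rather than stop at ``this is exactly the generator.''
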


\begin{remark}
These dynamics can be equivalently interpreted as a Poisson random walk in $\Z^n$ with rates $1,t,\ldots,t^{n-1}$ reflected at the boundary of the positive type $A$ Weyl chamber. We note also that they are continuous-time limits of the $\alpha$ dynamics described previously in \cite[Proposition 5.3]{van2020limits}.
\end{remark}

\begin{proof}[Proof of {\Cref{thm:explicit_planch_dynamics}}]
Define the $\cP_n \times \cP_n$ matrix $B$, which goes by the name of the generator, $Q$-matrix\footnote{Usually the matrix $B$ would also be called $Q$, but we have chosen $B$ to avoid confusion with the polynomials $Q_{\nu/\l}$.}, or matrix of transition rates of the Markov process $\l(\tau)$, by
\begin{equation}
    B(\mu,\nu) := \dde{}{\tau}{\tau=0} \Pr(\lambda(T+\tau)=\nu | \lambda(T) = \mu)
\end{equation}
(this is of course independent of $T$ by the Markov property). By the equivalence of the Hall-Littlewood process with the Cauchy dynamics of \Cref{def:cauchy_dynamics} and the explicit formulas \eqref{eq:def_cauchy_kernel}, we have 
\[
\Pr(\lambda(T+\tau)=\nu | \lambda(T) = \mu) = \frac{Q_{\nu/\mu}(\gamma(\tau);t)}{\exp\left(\tau \frac{1-t^n}{1-t}\right)} \frac{P_\nu(1,\ldots,t^{n-1};t)}{P_\mu(1,\ldots,t^{n-1};t)},
\]
and only the term 
\[
\frac{Q_{\nu/\mu}(\gamma(\tau);t)}{\exp\left(\tau \frac{1-t^n}{1-t}\right)}
\]
depends on $\tau$. When $\nu = \mu$, $Q_{\nu/\mu} = 1$, so 
\begin{equation}\label{eq:gen_diagonal}
    B(\mu,\mu) = \dde{}{\tau}{\tau=0} \Pr(\lambda(T+\tau)=\mu | \lambda(T) = \mu) = -\frac{1-t^n}{1-t}.
\end{equation}
In general, $Q_{\nu/\mu}$ (viewed as an element of the ring of symmetric functions) is a polynomial in the $p_k, k \geq 1$ which is homogeneous of degree $|\nu|-|\mu|$ if we define each $p_k$ to have degree $k$. Under the Plancherel specialization, all $p_k, k \geq 2$ are sent to $0$, hence $Q_{\nu/\mu}(\gamma(\tau);t) = O(\tau^{|\nu|-|\mu|})$ as $\tau \to 0$. It follows that 
\begin{equation}\label{eq:zero_condition}
    B(\mu,\nu) = 0 \text{ if $|\nu| > |\mu|+1$}.
\end{equation}
When $|\nu|=|\mu|+1$, it follows from \Cref{thm:branching_formulas} that $Q_{\nu/\mu} = \varphi_{\nu/\mu} p_1$. Hence 
\[
Q_{\nu/\mu}(\gamma(\tau);t) = \varphi_{\nu/\mu}\frac{\tau}{1-t}
\]
and 
\begin{equation}\label{eq:deriv_1box}
    \dde{}{\tau}{\tau=0} \Pr(\lambda(T+\tau)=\nu | \lambda(T) = \mu) = \frac{\varphi_{\nu/\mu}}{1-t} \frac{P_\nu(1,\ldots,t^{n-1};t)}{P_\mu(1,\ldots,t^{n-1};t)}, 
\end{equation}
Since $\mu \prec \nu$, all parts in $\mu$ and $\nu$ are the same except for one part which differs by $1$, so there are some integers $k,a \geq 0, b \geq 1$ such that $\mu = (\ldots,(k+1)[a],k[b],\ldots)$ and $\nu = (\ldots,(k+1)[a+1],k[b-1],\ldots)$ where we use $x[c]$ to denote $x$ repeated $c$ times in the partition. Let $\ell$ be the smallest integer so that $\mu_\ell = k$. To compute \eqref{eq:deriv_1box} we specialize \Cref{def:psi_varphi_coefs} to obtain
\[
\varphi_{\nu/\mu} = 1-t^{a+1},
\]
and plugging this and \Cref{thm:hl_principal_formulas} into \eqref{eq:deriv_1box} yields that in our situation
\begin{equation}\label{eq:deriv_1box_simpler}
    B(\mu,\nu) = \frac{t^{\ell-1}(1-t^b)}{1-t} = t^{\ell-1} + \ldots + t^{(\ell+b-1)-1}.
\end{equation}
Combining \eqref{eq:gen_diagonal}, \eqref{eq:zero_condition} and \eqref{eq:deriv_1box_simpler} yields a complete description of $B$. As a sanity check, these explicit formulas tell us that for any fixed $\mu$, 
\[
\sum_{\nu \in \cP} B(\mu,\nu) = 0,
\]
which should be true in general because $B$ comes from a stochastic matrix. We next note that this is exactly the $Q$-matrix of the Poisson jump process described in the statement, since if the $\ell\tth$ part of the partition has rate $t^{\ell-1}$, then the rate at which any one of the parts equal to $k$ jump in the above setup is 
\[
t^{\ell-1} + \ldots + t^{(\ell+b-1)-1} = t^{\ell-1}\frac{1-t^b}{1-t}.
\]
It follows by results of \cite{feller2015integro}, see also \cite[Sec. 4.1]{borodin2012markov}, that a continuous-time Markov process on a countable state space is uniquely determined by its $Q$-matrix in the case when the diagonal entries of $Q$ (hence all entries, by stochasticity) are uniformly bounded; in this case, the time-$\tau$ Markov evolution operator is given by $e^{tB}$. The diagonal entries of our matrix $B$ are all the same, in particular uniformly bounded, hence both of our Markov processes are uniquely determined by their (equal) $Q$-matrices and the result follows.
\end{proof}

\begin{proof}[Proof of {\Cref{thm:HL_qTASEP_connection}}]
It suffices to observe that the transition rates \eqref{eq:qTASEP_trans_rates} match those in the proof of \Cref{thm:explicit_planch_dynamics} after translating between $\cP$ and $\mathbb{X}_0$ as above.
\end{proof}

\section{A contour integral formula for {$t$}-moments} \label{sec:q_moments} 

In this section we we prove contour integral formulas for certain $t$-moment observables of this particle system, which will be the main tool in subsequent asymptotic results. We again take $t \in (0,1)$, and denote the Weyl denominator/Vandermonde determinant by 
\[
\Delta(z_1,\ldots,z_n) := \prod_{1 \leq i < j \leq n} (z_i - z_j).
\]

\begin{proposition}\label{thm:observable_formula}
Let $\l(\tau)$ be distributed as a Hall-Littlewood process with specializations $1,t,\ldots$ and $\gamma(\tau)$ as in \Cref{thm:explicit_planch_dynamics}. Then for any positive integers $r_1,\ldots,r_M$,
\begin{multline}\label{eq:t-moment_integral}
    \E\left[t^{-\sum_{m=1}^M \sum_{j=1}^{r_m}\l_j'(\tau)}\right]  = \prod_{m=1}^M \frac{(-1)^{\binom{r_m}{2}}}{r_m!(2 \pi i)^{r_m}} \displaystyle \oint \cdots \oint \prod_{m=1}^M \left( \Delta(z_{1,m},\ldots,z_{r_m,m})^2 \prod_{s=1}^{r_m} \frac{e^{\tau z_{s,m}} (1+t^{-1}z_{s,m}^{-1})}{z_{s,m}^{r_m}}  \right)\\ \cdot \prod_{1 \leq \alpha < \beta \leq M } \prod_{\substack{ 1 \leq i \leq r_\alpha \\ 1 \leq j \leq r_\beta}} \frac{1-z_{j,\beta}/z_{i,\alpha}}{1-t^{-1}z_{j,\beta}/z_{i,\alpha}} dz_{1,1} \cdots dz_{r_M,M}.
\end{multline}
with all contours encircling $0$ and satisfying
\begin{align*}
    |z_{j,\beta}| < t|z_{i,\alpha}| & \text{ for all }1 \leq \alpha < \beta \leq M, 1 \leq i \leq r_\alpha, 1 \leq j \leq r_\beta \\
    |z_{s,\alpha}| < t^{-1} & \text{ for all }1 \leq \alpha \leq M, 1 \leq s \leq r_\alpha.
\end{align*}
\end{proposition}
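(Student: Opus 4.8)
The plan is to derive \eqref{eq:t-moment_integral} as a specialization of the contour integral formulas for $t$-moment observables of Hall-Littlewood processes established in \cite{bufetov2018hall}. First I would observe that the object we must analyze is simpler than a general ascending Hall-Littlewood process: its time-$\tau$ marginal is just the Hall-Littlewood measure $\mathbb{P}(\lambda)\propto P_\lambda(1,t,t^2,\ldots;t)\,Q_\lambda(\gamma(\tau);t)$, since the Plancherel increments accumulated up to time $\tau$ combine into the single specialization $\gamma(\tau)$ (power sums are additive under union of specializations, and $\gamma(s)(p_k)=0$ for all $k\ge 2$). So it suffices to establish the identity for this Hall-Littlewood measure, with $P$-side specialization $\theta=(1,t,t^2,\ldots)$ and $Q$-side specialization $\psi=\gamma(\tau)$, and with the observable written as $\E\big[\prod_{m=1}^M t^{-(\lambda_1'+\cdots+\lambda_{r_m}')}\big]$.

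Next I would invoke the general formula and specialize. Via the difference-operator (equivalently, fused vertex model) machinery underlying \cite{bufetov2018hall}, the expectation $\E\big[\prod_{m=1}^M t^{-(\lambda_1'+\cdots+\lambda_{r_m}')}\big]$ against a Hall-Littlewood measure with arbitrary nonnegative specializations is an $(r_1+\cdots+r_M)$-fold contour integral whose integrand factorizes into: for each block $m$, a Vandermonde-squared factor $\Delta(z_{1,m},\ldots,z_{r_m,m})^2$ and a combinatorial prefactor $\tfrac{(-1)^{\binom{r_m}{2}}}{r_m!(2\pi i)^{r_m}}$ coming from symmetrizing a nested integral; for each integration variable, the Cauchy kernels $\Pi(\psi;\cdot)$ and $\Pi(\theta;\cdot)$ evaluated at $z_{s,m}$ and at its $t$-shift (the ``deformation'' contributed by the relevant operator); and, between blocks $\alpha<\beta$, the cross-terms $\tfrac{1-z_{j,\beta}/z_{i,\alpha}}{1-t^{-1}z_{j,\beta}/z_{i,\alpha}}$ recording the order in which the $M$ operators are applied (which is why a product of $M$ observables at one time, rather than several times, produces the block/nesting structure). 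The two displayed families of contour conditions are precisely those under which the geometric re-summations and contour deformations in that derivation are legitimate. It then remains to evaluate the two Cauchy kernels: $\Pi(\gamma(\tau);\cdot)=\exp\!\big(\tau\,(\cdot)(p_1)\big)$ gives the $e^{\tau z_{s,m}}$ factors (after a cosmetic rescaling reconciling the $\tau$ here with the $(1-t)\tau$ of \Cref{thm:HL_qTASEP_connection}), while $\theta=(1,t,t^2,\ldots)$, whose single-variable Cauchy kernel $\prod_{j\ge 0}\tfrac{1-t^{j+1}z}{1-t^j z}$ telescopes explicitly, produces after the relevant algebraic simplification the factor $1+t^{-1}z_{s,m}^{-1}$, the $z_{s,m}^{-r_m}$, and the inter-block cross-terms already listed; the prefactors $\tfrac{(-1)^{\binom{r_m}{2}}}{r_m!(2\pi i)^{r_m}}$ are carried along from the symmetrization step.

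The main work — and the step I expect to cause the most trouble — is the specialization itself, in particular handling the infinite principal specialization $\theta=(1,t,t^2,\ldots)$. Since our measure lives on all of $\cP$, one must either take $n\to\infty$ in the finite-$n$ formulas of \cite{bufetov2018hall} or work with infinitely many variables from the outset, and in either case justify interchanging the limit (or infinite product) with the contour integrals; the bound $|z_{s,\alpha}|<t^{-1}$ is exactly what makes the resulting infinite products and re-summations converge, so the argument is to show the integrands converge suitably uniformly on the compact contours satisfying the stated inequalities. A secondary, largely bookkeeping obstacle is matching conventions with \cite{bufetov2018hall}: the placement of $1-t$ factors, the orientation of the contours, whether the operator deformation is $z\mapsto t^{-1}z$ or $z\mapsto tz$, and whether the cited $t$-moment formula is for $\lambda$ or its conjugate $\lambda'$ (the ubiquitous $t^{-1}$'s in \eqref{eq:t-moment_integral} strongly suggest a conjugation or $t\mapsto t^{-1}$ step enters) — all of which must be pinned down so that every factor, including the sign $(-1)^{\binom{r_m}{2}}$, comes out exactly as written.
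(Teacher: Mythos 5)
Your proposal correctly reduces the problem to a single-time Hall-Littlewood measure with specializations $\theta=(1,t,t^2,\ldots)$ and $\psi=\gamma(\tau)$, and correctly identifies \cite[Thm.\ 2.12]{bufetov2018hall} as the engine; that much matches the paper. But you have misplaced where the genuine technical difficulty lies, and the step you gloss over is the one the paper has to work for. You write as if the cited formula applies to ``arbitrary nonnegative specializations'' and worry mainly about the infinite principal specialization $1,t,t^2,\ldots$; in fact that side is harmless (its one-variable Cauchy kernel telescopes, as you yourself note), and the formula of Bufetov--Matveev is invoked with \emph{finite-alpha} specializations only. The paper therefore does \emph{not} apply the theorem directly to $\gamma(\tau)$: it substitutes the approximating alpha specialization $\frac{\tau}{(1-t)D}[D]$, obtains the finite-$D$ contour integral \eqref{eq:bufmat_formula}, and then takes $D\to\infty$.

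The nontrivial point you miss is justifying the $D\to\infty$ limit on the \emph{left-hand side}. The observable $t^{-\sum_m\sum_{j\le r_m}\lambda_j'}$ is unbounded on $\Y$ (since $t<1$), so the expectation is an infinite sum of unbounded terms and one cannot simply pass to the limit by continuity or dominated convergence; uniform convergence of integrands on compact contours, which you invoke, handles only the right-hand side. The paper's fix is Lemma~\ref{thm:alpha_gamma_monotonic}: the approximating skew Hall-Littlewood specializations $P_{\lambda/\mu}\big(\frac{\tau}{(1-t)D}[D]\big)$ increase monotonically in $D$ to $P_{\lambda/\mu}(\gamma(\tau))$, so the monotone convergence theorem applies termwise to the sum defining the expectation. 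This monotonicity lemma is proved specifically to make this step work, and it is the part a blind reader would not anticipate. Your bookkeeping concerns (conventions, $t\mapsto t^{-1}$, conjugation of $\lambda$) are real but, as you say, secondary; the missing ingredient is the monotone-convergence justification for the alpha-to-Plancherel limit of the expectation.
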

\begin{proof}
First consider a partition $\mu(D,\tau)$ distributed as a Hall-Littlewood process with alpha specializations $1,t,\ldots$ and $\left(\frac{\tau}{1-t} \frac{1}{D}\right)[D]$, where again $[D]$ denotes $D$ copies of the same specialization. We recall that this latter specialization is an approximation to the Plancherel specialization $\gamma(\tau)$ and converges to it as $D \to \infty$ by \Cref{thm:alpha_gamma_monotonic}. Specializing \cite[Thm. 2.12]{bufetov2018hall} to our case\footnote{In the notation of \cite[Thm. 2.12]{bufetov2018hall}, we are taking $N=M$, $X_1 = \left(\frac{\tau}{1-t} \frac{1}{D}\right)[D], X_2 = \cdots = X_M = 0$ and $Y_1 = \cdots = Y_{M-1} = 0, Y_M =  1,t,\ldots$. Then the product over $1 \leq \alpha \leq \beta \leq M$ in the fourth line of (2.22) of \cite{bufetov2018hall} only gives nontrivial terms when $\alpha = 1$ or $\beta = M$.} yields 
\begin{align}\label{eq:bufmat_formula}
\begin{split}
&\E\left[t^{-\sum_{m=1}^M \sum_{j=1}^{r_m}\mu(D,\tau)_j'}\right]  = \prod_{m=1}^M \frac{(-1)^{\binom{r_m}{2}}}{r_m!(2 \pi i)^{r_m}} \displaystyle \oint \cdots \oint \\
&\prod_{m=1}^M \left( \frac{\prod_{1 \leq i < j \leq r_m} (z_{i,m} - z_{j,m})^2 }{(z_{1,m}\cdots z_{r_m,m})^{r_m}} \prod_{s=1}^{r_m} \left(\prod_{j =1}^D \frac{1+z_{s,m}\frac{\tau}{1-t} \frac{1}{D}}{1 + tz_{s,m} \frac{\tau}{1-t} \frac{1}{D}} \right)  \left(\prod_{i \geq 1} \frac{1 + z_{s,m}^{-1}t^{i-2}}{1 + z_{s,m}^{-1}t^{i-1}}\right) \right)
\\ &\cdot \prod_{1 \leq \alpha < \beta \leq M } \prod_{\substack{ 1 \leq i \leq r_\alpha \\ 1 \leq j \leq r_\beta}} \frac{1-z_{j,\beta}/z_{i,\alpha}}{1-t^{-1}z_{j,\beta}/z_{i,\alpha}} dz_{1,1} \cdots dz_{r_M,M}
\end{split}
\end{align}
with all contours encircling $0$ and 
\begin{align*}
    |z_{i,\alpha}| < t|z_{j,\beta}|  & \quad \quad \text{ for all } \quad \quad 1 \leq \alpha < \beta \leq M, 1 \leq i \leq r_\alpha, 1 \leq j \leq r_\beta \\
    \frac{\tau}{1-t} \frac{1}{D} < |z_{s,\alpha}| < t^{-1} & \quad \quad \text{ for all } \quad \quad 1 \leq \alpha \leq M, 1 \leq s \leq r_\alpha 
\end{align*}
provided such contours exist. 
We note that for any fixed $t \in (0,1)$ and $\tau \geq 0$, such contours exist for all $D$ sufficiently large. Picking a choice of contours, we have 
\begin{align}\label{eq:bufmat_rhs_limit}
\begin{split}
    &\lim_{D \to \infty} \text{RHS}{\eqref{eq:bufmat_formula}} = \prod_{m=1}^M \frac{(-1)^{\binom{r_m}{2}}}{r_m!(2 \pi i)^{r_m}} \displaystyle \oint \cdots \oint \prod_{m=1}^M \left( \Delta(z_{1,m},\ldots,z_{r_m,m})^2 \prod_{s=1}^{r_m} \frac{(1+t^{-1}z_{s,m}^{-1}) e^{\tau z_{s,m}}}{z_{s,m}^{r_m}} \right)
\\ &\cdot \prod_{1 \leq \alpha < \beta \leq M } \prod_{\substack{ 1 \leq i \leq r_\alpha \\ 1 \leq j \leq r_\beta}} \frac{1-z_{j,\beta}/z_{i,\alpha}}{1-t^{-1}z_{j,\beta}/z_{i,\alpha}} dz_{1,1} \cdots dz_{r_M,M}
    \end{split}
\end{align}
where the limit commutes with the integral because the integrand remains bounded as $D \to \infty$ and the contours are compact. 

It now suffices to show convergence of the left hand side of \eqref{eq:bufmat_formula} to that of \eqref{eq:t-moment_integral}, i.e. we must show
\begin{align}
    \label{eq:bufmat_lhs_limit}
    \begin{split}
        &\lim_{D \to \infty} \frac{1}{\Pi\left(1,t,\ldots;\frac{\tau}{1-t} \frac{1}{D}[D]\right)} \sum_{\l \in \Y} Q_\l(1,t,\ldots) P_\l\left( \frac{\tau}{1-t} \frac{1}{D}[D]\right) t^{-\sum_{m=1}^M \sum_{j=1}^{r_m}\l_j'} \\
        &= \frac{1}{\Pi(1,t,\ldots;\gamma(\tau))} \sum_{\l \in \Y} Q_\l(1,t,\ldots)P_\l(\gamma(\tau)) t^{-\sum_{m=1}^M \sum_{j=1}^{r_m}\l_j'}
    \end{split}
\end{align}

It follows simply from the definition by \eqref{eq:general_cauchy_kernel} that 
\[
\lim_{D \to \infty} \frac{1}{\Pi\left(1,t,\ldots;\frac{\tau}{1-t} \frac{1}{D}[D]\right)} = \frac{1}{\Pi(1,t,\ldots;\gamma(\tau))},
\]
so it suffices to show that the sum on the LHS of \eqref{eq:bufmat_lhs_limit} converges to the one on the RHS. We may write the sum on the LHS (resp. RHS) as an integral of the function $f_D(\l) = Q_\l(1,t,\ldots) P_\l(\frac{\tau}{1-t}\frac{1}{D}[D])$ (resp. $f(\l) = Q_\l(1,t,\ldots) P_\l(\gamma(\tau))$) with respect to the measure on the discrete set $\Y$ determined by $\text{meas}(\{\l\}) = t^{-\sum_{m=1}^M \sum_{j=1}^{r_m}\l_j'}$. By \Cref{thm:alpha_gamma_monotonic}, $f_D(\l)$ converges monotonically from below to $f(\l)$, hence the monotone convergence theorem yields the desired convergence of sums, and \eqref{eq:bufmat_lhs_limit} follows.

The limit $D \to \infty$ also makes the contour condition $\frac{\tau}{1-t} \frac{1}{D} < |z_{s,\alpha}|$ automatic, so combining \eqref{eq:bufmat_rhs_limit} and \eqref{eq:bufmat_lhs_limit} completes the proof.
\end{proof}

\section{Moment convergence at fixed $t$}\label{sec:fixed_t}

In this section, as before, we take $t \in (0,1)$. While in future sections we will consider a limit as $t \to 1$, in this one we prove that joint moments of the exponential transforms $t^{-(\l_i'(\tau) + \log_t \tau)}$ converge. This supports the claim in the Introduction that the $\l_i'(\tau)$---hence the particles $x_i(\tau)$ in slowed $t$-TASEP---have asymptotically finite fluctuations about a mean $-\log_t \tau + O(1)$ as $\tau \to \infty$ for fixed $t$.

\begin{proposition}\label{thm:limit_moments}
For any $M \in \Z_{\geq 1}$ and $r_1,\ldots,r_M \in \Z_{\geq 1}$,
\begin{align}
\begin{split}
    &\lim_{\tau \to \infty} \E\left[t^{-\sum_{m=1}^M \sum_{j=1}^{r_m} (\l_i'(\tau) + \log_t \tau)}\right] = \prod_{m=1}^M \frac{(-1)^{\binom{r_m}{2}}}{r_m!t^{r_m}(2 \pi i)^{r_m}}  \displaystyle \oint \cdots \oint \\
    &\prod_{m=1}^M \left( \Delta(w_{1,m},\ldots,w_{r_m,m})^2 \prod_{s=1}^{r_m} \frac{e^{w_{s,m}}}{w_{s,m}^{r_m+1}}  \right) \prod_{1 \leq \alpha < \beta \leq M } \prod_{\substack{ 1 \leq i \leq r_\alpha \\ 1 \leq j \leq r_\beta}} \frac{1-w_{j,\beta}/w_{i,\alpha}}{1-t^{-1}w_{j,\beta}/w_{i,\alpha}} dw_{1,1} \cdots dw_{r_M,M}
\end{split}
\end{align}
with all contours encircling $0$ and satisfying
\begin{align*}
    |w_{j,\beta}| < t|w_{i,\alpha}| & \text{ for all }1 \leq \alpha < \beta \leq M, 1 \leq i \leq r_\alpha, 1 \leq j \leq r_\beta.
\end{align*}
\end{proposition}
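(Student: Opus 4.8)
The plan is to substitute the large-$\tau$ scaling directly into the contour integral formula of \Cref{thm:observable_formula} and pass to the limit under the integral sign. Since $\log_t \tau$ does not depend on the summation indices and $t^{-\log_t \tau} = \tau^{-1}$, one has
\[
t^{-\sum_{m=1}^M \sum_{j=1}^{r_m}(\l_j'(\tau) + \log_t \tau)} = \tau^{-R}\, t^{-\sum_{m=1}^M \sum_{j=1}^{r_m}\l_j'(\tau)}, \qquad R := \sum_{m=1}^M r_m,
\]
so that \Cref{thm:observable_formula} expresses $\E[t^{-\sum\sum(\l_j'(\tau)+\log_t\tau)}]$ as $\tau^{-R}$ times the $R$-fold contour integral there (with the $e^{\tau z}$ factors).

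Next I would perform the change of variables $z_{s,m} = w_{s,m}/\tau$. This rescales the $z$-contours (encircling $0$, with $|z_{s,\alpha}| < t^{-1}$ and $|z_{j,\beta}| < t|z_{i,\alpha}|$ for $\alpha < \beta$, whose existence for all $\tau \geq 0$ was noted in the proof of \Cref{thm:observable_formula}) to $w$-contours satisfying $|w_{s,\alpha}| < \tau/t$ and $|w_{j,\beta}| < t|w_{i,\alpha}|$; for $\tau$ large the first condition is vacuous, so one may fix once and for all a family of $w$-contours encircling $0$ and satisfying only $|w_{j,\beta}| < t|w_{i,\alpha}|$, independent of $\tau$. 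Under the substitution, $dz = \tau^{-R}\,dw$; each $\Delta(z_{\cdot,m})^2$ contributes $\tau^{-r_m(r_m-1)}$; each $z_{s,m}^{-r_m}$ contributes $\tau^{r_m}$; each $e^{\tau z_{s,m}}$ becomes $e^{w_{s,m}}$; and each $1 + t^{-1}z_{s,m}^{-1} = 1 + t^{-1}\tau w_{s,m}^{-1} = \tau\bigl(t^{-1}w_{s,m}^{-1} + \tau^{-1}\bigr)$ contributes a factor $\tau$ times $t^{-1}w_{s,m}^{-1} + \tau^{-1}$, while the cross-terms $\tfrac{1 - z_{j,\beta}/z_{i,\alpha}}{1 - t^{-1}z_{j,\beta}/z_{i,\alpha}}$ are unchanged. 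The $m$-th block thus contributes $\tau^{-r_m(r_m-1) + r_m^2 + r_m} = \tau^{2 r_m}$, so together with $dz = \tau^{-R}\,dw$ and the external $\tau^{-R}$ the net power is $\tau^{2R-R-R} = \tau^0$: all powers of $\tau$ cancel. The surviving $t$-dependence is one factor $t^{-1}$ per variable, i.e. $\prod_m t^{-r_m}$, which combines with $\prod_m \tfrac{(-1)^{\binom{r_m}{2}}}{r_m!(2\pi i)^{r_m}}$ to give the prefactor $\prod_m \tfrac{(-1)^{\binom{r_m}{2}}}{r_m!\,t^{r_m}(2\pi i)^{r_m}}$ of the statement.

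After the change of variables the integrand over the fixed $w$-contours is precisely the claimed limiting integrand, except with each $\tfrac{t^{-1}}{w_{s,m}}$ replaced by $\tfrac{t^{-1}}{w_{s,m}} + \tfrac1\tau$. Since the $w$-contours are compact and bounded away from $0$, this integrand (and all its bounded prefactors $\Delta(w_{\cdot,m})^2$, cross-terms, etc.) converges uniformly to the limiting one as $\tau\to\infty$, so the limit passes inside the integral, giving the stated formula. The only step needing care is this interchange of limit and integral; it is the main (but entirely routine) point, and follows immediately from uniform convergence on the fixed compact contours — there is no genuine analytic obstacle here, in contrast to the $t\to1$ asymptotics considered in later sections.
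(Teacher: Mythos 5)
Your proof is correct and follows essentially the same route as the paper: substitute $z_{s,m} = w_{s,m}/\tau$ in the formula of \Cref{thm:observable_formula}, note that for large $\tau$ the contour constraint $|z_{s,\alpha}|<t^{-1}$ becomes vacuous after rescaling so the $w$-contours can be fixed independent of $\tau$, and pass $\tau\to\infty$ through the integral by bounded/uniform convergence on the compact contours. The paper keeps the integrand in the compact form $\tfrac{e^{w_{s,m}}(1+t^{-1}\tau w_{s,m}^{-1})/\tau}{w_{s,m}^{r_m}}$ and invokes bounded convergence directly, whereas you spell out the $\tau$-power bookkeeping explicitly, but the content is identical.
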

\begin{proof}
Making the change of variables $w_{s,m} = \tau z_{s,m}$, and choosing the $z$-contours of \Cref{thm:observable_formula} to scale as $\tau^{-1}$ such that the $w$-contours are independent of $\tau$, \Cref{thm:observable_formula} yields
\begin{multline*}
\E\left[t^{-\sum_{m=1}^M \sum_{j=1}^{r_m} (\l_j'(\tau)+\log_t \tau)}\right] = \tau^{-\sum_{m=1}^M r_m} \E\left[t^{-\sum_{m=1}^M \sum_{j=1}^{r_m}  \l_j'(\tau)}\right] \\
=\prod_{m=1}^M \frac{(-1)^{\binom{r_m}{2}}}{r_m!(2 \pi i)^{r_m}} \displaystyle \oint \cdots \oint \prod_{m=1}^M \left( \Delta(w_{1,m},\ldots,w_{r_m,m})^2 \prod_{s=1}^{r_m} \frac{e^{w_{s,m}}(1+t^{-1}\tau w_{s,m}^{-1})/\tau}{w_{s,m}^{r_m}}  \right)\\ \cdot \prod_{1 \leq \alpha < \beta \leq M } \prod_{\substack{ 1 \leq i \leq r_\alpha \\ 1 \leq j \leq r_\beta}} \frac{1-w_{j,\beta}/w_{i,\alpha}}{1-t^{-1}w_{j,\beta}/w_{i,\alpha}} dw_{1,1} \cdots dw_{r_M,M}
\end{multline*}
with contours as in the statement above. The integrand is clearly bounded independent of $\tau$, so by bounded convergence the $\tau \to \infty$ limit passes through the integral.
\end{proof}

In particular, the random variables $t^{-\sum_{i=1}^r (\l_i'(\tau) + \log_t \tau) }$ have asymptotically bounded variance. We note, however, that the moment convergence above does \emph{not} imply that these random variables converge in distribution. For instance, suppose $\tau \to \infty$ along the subsequence $\tau^{(\alpha)}_n = t^{-n-\alpha}$ for some $\alpha \in [0,1)$. Then $t^{-(\l_1'(\tau^{(\alpha)}_n)+\log_t \tau^{(\alpha)}_n)}$ is supported on $t^{\alpha+\Z}$ for all $n$, since $\l_1'$ takes values in $\Z$. It is therefore clear that for different $\alpha$, these random variables cannot converge to the same limit (except in the case of a trivial limit such as $0$, which may be ruled out by \Cref{thm:limit_moments}). Numerically, the moments in \Cref{thm:limit_moments} may be checked to violate Carleman's condition, so these nonunique limits are not surprising. We conjecture that random variables $\l_i'(t^{-n-\alpha}) -(n+\alpha)$ converge in joint distribution as $n \to \infty$ for any $\alpha \in [0,1)$, with distinct limits for each $\alpha$, which nonetheless have the same joint moments. However, since our main purpose is the joint $t \to 1, \tau \to \infty$ limit, we do not address this question.

\section{Law of large numbers}\label{sec:particle_LLN}

In this section we establish the law of large numbers for particle positions as $t \to 1$, recalled below.

\llnintro*

We begin with a straightforward heuristic derivation by taking a continuum limit of jump rates to obtain an ODE for particle positions, then give a rigorous proof using the observables in \Cref{thm:observable_formula}.

First, we wish to see the scaling of $t=e^{-\eps}$, space and time such that both the particle positions and jump rates converge to nontrivial limits. The first particle $x_1(\tau)$ jumps as a rate-$1$ Poisson process as $t \to 1$, so for it to converge to a nontrivial limit, we should wait a long time (take time to be $\approx \tau/\eps$) and rescale space by $\eps$, i.e. we should consider $\eps x_1\left( \tau/\eps \right)$. For the first particle, the law of large numbers guarantees concentration, though arguing for the others would be slightly more involved; however, let us suppose 
\[
\eps x_k(\tau/\eps) \to c_k(\tau)
\]
for some functions $c_1,c_2,\ldots$.

Then we have convergence of jump rates
\[
t^{x_k(\tau/\eps)+k}(1-t^{x_{k-1}(\tau/\eps)-x_k(\tau/\eps)-1}) \to e^{-c_k(\tau)}(1-e^{c_{k-1}(\tau) - c_k(\tau)}) = e^{-c_k(\tau)} - e^{-c_{k-1}(\tau)}
\]
where when $k=1$ we take $e^{-c_{k-1}(\tau)} = 0$.
Because we are scaling time as $\eps^{-1}$ and then rescaling space by $\eps$, by concentration of Poisson variables we should have
\[
\lim_{\eps \to 0^+} \left(\text{jump rate of }x_k(\tau/\eps)\right) = \dderiv{c_k(\tau)}{\tau}.
\]
Hence the functions $c_k(\tau)$ should satisfy
\begin{align}\label{eq:c_ODE}
    \dderiv{c_k(\tau)}{\tau} &= e^{-c_k(\tau)} - e^{-c_{k-1}(\tau)} \quad \quad \quad \text{ for all }k \geq 1,
\end{align}
where when $k=1$ we take $c_0 = \infty$ so the second term on the RHS is not present.
It is easy to verify by inspection that the limits given in \Cref{thm:LLN_intro}, namely
\begin{equation}\label{eq:def_c}
    c_k(\tau) := \log \left(\sum_{j=0}^k \frac{\tau^j}{j!}\right) - \log \left(\sum_{j=0}^{k-1} \frac{\tau^j}{j!} \right),
\end{equation}
satisfy \eqref{eq:c_ODE}, and furthermore have initial conditions $c_k(0) = 0$ as they should. This concludes the heuristic derivation of \Cref{thm:LLN_intro}, and we move on to the proof.

\begin{proof}[Proof of {\Cref{thm:LLN_intro}}]
We first claim that it suffices to show the same limit as in \Cref{thm:LLN_intro} for the slightly different quantity $\eps x_k\pfrac{\tau}{1-e^{-\eps}}$. Assuming this result and setting $\teps = -\log(1-\eps)$ so that $\eps = 1-e^{-\teps}$, we have
\[
\eps x_k(\tau/\eps) = \left(1-e^{-\teps}\right) x_k \pfrac{\tau}{1-e^{-\teps}} = \left(\teps + O(\teps^2)\right) x_k\pfrac{\tau}{1-e^{-\teps}},
\]
hence convergence in probability for $\eps x_k\pfrac{\tau}{1-e^{-\eps}}$ implies the same for $\eps x_k(\tau/\eps)$. Since 
\[
x_k\pfrac{\tau}{1-t} = \l_k(\tau)' - k \quad \quad \text{in distribution}
\]
by \Cref{thm:HL_qTASEP_connection}, it suffices to show that 
\begin{equation}\label{eq:prob_conv_lambda}
    \eps \l_k(\tau) \to \log \left(\sum_{j=0}^k \frac{\tau^j}{j!}\right) - \log \left(\sum_{j=0}^{k-1} \frac{\tau^j}{j!} \right) \quad \quad \text{in probability as $\eps \to 0^+$.}
\end{equation}
It therefore suffices to show the convergence of Laplace transforms
\begin{equation}\label{eq:prob_conv_laplace}
    e^{\eps \sum_{j=1}^r \l_j(\tau)} \to \sum_{j=0}^r \frac{\tau^j}{j!} \quad \quad \text{in probability as $\eps \to 0^+$}
\end{equation}
for each $r$, as then $\eps \sum_{j=1}^r \l_j(\tau)$ converges in probability, and taking differences yields \eqref{eq:prob_conv_lambda}. Let $Y_r(t) = e^{\eps \sum_{j=1}^r \l_j(\tau)}$ (recall $t=e^{-\eps}$). By Chebyshev's inequality, to show \eqref{eq:prob_conv_laplace} it suffices to show 
\begin{equation}\label{eq:exp_conv}
    \E[Y_r(t)] \to  \sum_{j=0}^r \frac{\tau^j}{j!} 
\end{equation}
and 
\begin{equation}\label{eq:var_conv}
    \var(Y_r(t)) \to 0
\end{equation}
as $\eps \to 0^+$.

By \Cref{thm:observable_formula}, 
\begin{equation}\label{eq:exp_first}
   \E[Y_r(t)] = \frac{(-1)^{\binom{r}{2}}}{r! (2 \pi \bi)^r} \oint \cdots \oint \prod_{1 \leq i < j \leq r} (z_i - z_j)^2 \prod_{s=1}^r \frac{1+t^{-1}z_s^{-1}}{z_s^{r-1}}e^{\tau z_s} \frac{dz_s}{z_s},
\end{equation}
where all contours are circles around the origin of radius $\leq 1$, and
\begin{align}\label{eq:exp_squared}
\begin{split}
    \E[Y_r(t)^2] =& \frac{1}{(r!)^2 (2 \pi \bi)^{2r}} \oint \cdots \oint \prod_{1 \leq i,j \leq r} \frac{1 - z_{j,2}/z_{i,1}}{1-t^{-1}z_{j,2}/z_{i,1}} \\
    &\cdot \prod_{\ell=1}^2\left( \prod_{1 \leq i < j \leq r} (z_{i,\ell}-z_{j,\ell})^2 \prod_{s=1}^r \frac{1+t^{-1}z_{s,\ell}^{-1}}{z_{s,\ell}^{r-1}}e^{\tau z_{s,\ell}} \frac{d z_{s,\ell}}{z_{s,\ell}}\right)
\end{split}
\end{align}
where we take the $z_{s,1}$ and $z_{s,2}$ contours to be circles of radii $1$ and $R$ respectively, for some $0 < R < 1$ fixed independent of $\eps$ (for all $\eps$ sufficiently small that such contours satisfy the conditions in \Cref{thm:observable_formula}). It follows by combining two instances of \eqref{eq:exp_first} that
\begin{equation*}
    \E[Y_r(t)]^2 = \frac{1}{(r!)^2 (2 \pi \bi)^{2r}} \oint \cdots \oint  \prod_{\ell=1}^2\left( \prod_{1 \leq i < j \leq r} (z_{i,\ell}-z_{j,\ell})^2 \prod_{s=1}^r \frac{1+t^{-1}z_{s,\ell}^{-1}}{z_{s,\ell}^{r-1}}e^{\tau z_{s,\ell}} \frac{d z_{s,\ell}}{z_{s,\ell}}\right)
\end{equation*}
with the same contours as in \eqref{eq:exp_squared}, and combining with \eqref{eq:exp_squared} yields
\begin{align}\label{eq:var_formula}
\begin{split}
    \var(Y_r(t)) =& \frac{1}{(r!)^2 (2 \pi \bi)^{2r}} \oint \cdots \oint \left(\prod_{1 \leq i,j \leq r} \frac{1 - z_{j,2}/z_{i,1}}{1-t^{-1}z_{j,2}/z_{i,1}} - 1 \right) \\
    &\cdot \prod_{\ell=1}^2\left( \prod_{1 \leq i < j \leq r} (z_{i,\ell}-z_{j,\ell})^2 \prod_{s=1}^r \frac{1+t^{-1}z_{s,\ell}^{-1}}{z_{s,\ell}^{r-1}}e^{\tau z_{s,\ell}} \frac{d z_{s,\ell}}{z_{s,\ell}}\right).
    \end{split}
\end{align}
The contours in \eqref{eq:var_formula} are compact and independent of $t$, and due to the 
\[
\prod_{1 \leq i,j \leq r} \frac{1 - z_{j,2}/z_{i,1}}{1-t^{-1}z_{j,2}/z_{i,1}} - 1
\]
term the integrand converges to $0$ as $t \to 1$, which yields \eqref{eq:var_conv}. It remains to show \eqref{eq:exp_conv}. Taking all contours in \eqref{eq:exp_first} to be the unit circle so $z_i^{-1} = \bar{z_i}$, by rewriting the Weyl denominator 
\[
\prod_{1 \leq i < j \leq r} z_i - z_j = (-1)^{\binom{r}{2}} \left(\prod_{1 \leq i < j \leq r} \bar{z_i} - \bar{z_j}\right) \prod_{s=1}^r z_s^{r-1}
\]
we have
\begin{equation}\label{eq:exp_unitary_product}
   \E[Y_r(t)] = \frac{1}{r! (2 \pi \bi)^r} \oint \cdots \oint \prod_{1 \leq i < j \leq r} \abs{z_i - z_j}^2 \prod_{s=1}^r (1+t^{-1}\bar{z_s})e^{\tau z_s} \frac{dz_s}{z_s}.
\end{equation}
It is a classical fact, which follows from the Weyl character formula, Weyl integration formula and character orthogonality (or from the generalization to Macdonald polynomials in \cite[Chapter VI.9]{mac}), that the Schur polynomials $s_\l(z_1,\ldots,z_r) = P_\l(z_1,\ldots,z_r;t=0)$ are orthonormal with respect to the inner product
\begin{equation*}
    \langle f, g \rangle = \frac{1}{r! (2 \pi \bi)^r} \oint \cdots \oint \prod_{1 \leq i < j \leq r} \abs{z_i - z_j}^2 f(z_1,\ldots,z_r) \overline{g(z_1,\ldots,z_r)} \prod_{s=1}^r \frac{dz_s}{z_s}
\end{equation*}
where the integrals are over the unit circle in $\C$. Hence to compute \eqref{eq:exp_unitary_product} it suffices to expand $e^{\tau \sum_{s=1}^r z_s}$ and $\prod_{s=1}^r (1+t^{-1}\overline{z_s}) $ in terms of the Schur polynomials.
We have 
\begin{equation}\label{eq:bar_int_part}
    \prod_{s=1}^r (1+t^{-1}\bar{z_s}) = \sum_{k=0}^r t^{-k}\overline{e_k(z_1,\ldots,z_r)}
\end{equation}
where $e_k(z_1,\ldots,z_r) = s_{(1[k])}(z_1,\ldots,z_r)$ is the elementary symmetric polynomial, and 
\begin{equation}\label{eq:nobar_int_part}
    e^{\tau \sum_{s=1}^r z_s} = \sum_{j \geq 0} \frac{\tau^j}{j!} e_1(z_1,\ldots,z_r)^j.
\end{equation}
It follows from the classical Pieri rule for Schur functions, see for example \cite{mac}, that for $1 \leq j \leq r$
\begin{equation}\label{eq:schur_expansion}
    e_1(z_1,\ldots,z_r)^j = s_{(1[j])}(z_1,\ldots,z_r) + \ldots
\end{equation}
when expanded in the basis of Schur functions, where the other terms on the RHS of \eqref{eq:schur_expansion} are Schur functions $s_\l$ where $|\l|=j$ and $\l \neq (1[j])$. We therefore have 
\begin{equation}\label{eq:e_product}
   \langle e_k, e_1^j \rangle = \delta_{j,k}. 
\end{equation}
Combining \eqref{eq:bar_int_part}, \eqref{eq:nobar_int_part} and \eqref{eq:e_product} yields that 
\[
\text{RHS{\eqref{eq:exp_unitary_product}}} = \sum_{j=0}^r \frac{\tau^j}{j!}t^{-j}.
\]
Sending $t \to 1$ and tracing back the chain of equalities, this shows \eqref{eq:exp_conv} and hence completes the proof.
\end{proof}

\section{Gaussian fluctuations}\label{sec:fluctuations}

In this section we move on from the law of large numbers to study the fluctuations of the particle positions $x_k(\tau)$. \Cref{thm:gaussianity} uses general machinery of \cite{borodin2015general} to show Gaussian fluctuations for the particle positions $x_k\left(\tfrac{\tau}{1-t}\right)$ and gives a formula for their covariance, but the number of contour integrals in the formula grows with the particle index, making it intractable asymptotically. Taking a further $\tau \to \infty$ limit, this covariance converges (without rescaling) to an expression which can be simplified to a double contour integral with the aid of orthogonal polynomial techniques similar to those used in \cite[\S 5.1]{borodin2018anisotropic}. 

\begin{definition}\label{def:prelimit_fluc}
Letting $x_i$ be the position of the $i\tth$ particle of slowed $t$-TASEP with parameter $t = e^{-\eps}$, we define $X^{(i,\eps)}_\tau$ by 
\begin{equation*}
x_i\pfrac{\tau}{1-t} = \E\left[x_i\pfrac{\tau}{1-t}\right] + \eps^{-1/2}X^{(i,\eps)}_\tau.    
\end{equation*}
\end{definition}

\begin{proposition}\label{thm:gaussianity}
For any $n \in \Z_{\geq 1}$, the random vector $(X_\tau^{(1,\eps)},\ldots,X_\tau^{(n,\eps)})$ converges in distribution as $\eps \to 0^+$ to a mean $0$ Gaussian random vector $(X^{(1)}_\tau,\ldots,X^{(n)}_\tau)$. The covariances of these Gaussian random vectors are determined by the formula
\begin{multline}\label{eq:cov_with_vandermonde}
    \Cov(X^{(1)}_\tau+\ldots+X^{(r)}_\tau; X^{(1)}_\tau+\ldots + X^{(s)}_\tau) \\
    = \frac{\displaystyle \oint dz_{2,1} \cdots \oint dz_{2,s} \oint dz_{1,1} \cdots \oint dz_{1,r} \sum_{\substack{1 \leq i \leq r \\ 1 \leq j \leq s}}\frac{z_{2,j}}{z_{1,i}-z_{2,j}} F_\tau(z_{2,1},\ldots,z_{2,s})F_\tau(z_{1,1},\ldots,z_{1,r})}{\left(\displaystyle \oint dz_{2,1} \cdots \oint dz_{2,s} F_\tau(z_{2,1},\ldots,z_{2,s}) \right)\left(\displaystyle \oint dz_{1,1} \cdots \oint dz_{1,r} F_\tau(z_{1,1},\ldots,z_{1,r})\right)},
\end{multline}
for all $r \geq s \geq 1$, where 
\begin{equation}
    F_\tau(z_1,\ldots,z_k) = \Delta(z_1,\ldots,z_k)^2 \prod_{i=1}^k \frac{e^{\tau z_i}(1+z_i)}{z_i^{k+1}}
\end{equation}
and the contours are all positively oriented, encircle $0$ and satisfy $|z_{2,j}| < |z_{1,i}|$ for all $1 \leq i \leq r, 1 \leq j \leq s$.
\end{proposition}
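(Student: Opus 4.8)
The plan is to obtain Gaussianity from the general central limit machinery of \cite{borodin2015general} applied to suitable multiplicative observables, and to read off the covariance from the behaviour of the $t$-moment formula of \Cref{thm:observable_formula} to first order in $\eps$. \textbf{Step 1 (reduction).} By \Cref{thm:HL_qTASEP_connection} the configuration $\bigl(x_i(\tfrac{\tau}{1-t})\bigr)_{i\ge1}$ has the same joint law as $\bigl(\l_i'(\tau)-i\bigr)_{i\ge1}$, where $\l(\tau)$ is the Hall-Littlewood process of \Cref{thm:observable_formula}. Hence, writing $N_r:=\sum_{j=1}^r\l_j'(\tau)$ and $\xi_r:=\eps^{1/2}(N_r-\E N_r)$, one has $\sum_{j=1}^r X^{(j,\eps)}_\tau\stackrel{d}{=}\xi_r$ jointly in $r$; since $(X^{(1,\eps)}_\tau,\dots,X^{(n,\eps)}_\tau)\mapsto(\xi_1,\dots,\xi_n)$ is invertible and linear, it suffices to prove that $(\xi_1,\dots,\xi_n)$ converges to a centred Gaussian vector and to identify $\lim\Cov(\xi_r,\xi_s)$ with the right-hand side of \eqref{eq:cov_with_vandermonde} for $r\ge s$.

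\textbf{Step 2 (observables and their moments).} Put $t=e^{-\eps}$ and $\mathcal{O}_r:=t^{-N_r}/\E[t^{-N_r}]=e^{\eps N_r}/\E[e^{\eps N_r}]$, a positive random variable with $\E\mathcal{O}_r=1$ whose joint moments are finite by \Cref{thm:observable_formula}. With $G^{(\eps)}_k(z_1,\dots,z_k):=\Delta(z_1,\dots,z_k)^2\prod_{s=1}^k\frac{e^{\tau z_s}(z_s+t^{-1})}{z_s^{k+1}}$, which tends to $F_\tau$ uniformly on compacta as $\eps\to0^+$, \Cref{thm:observable_formula} gives --- the scalar prefactors $\prod_m\frac{(-1)^{\binom{r_m}{2}}}{r_m!(2\pi i)^{r_m}}$ cancelling between numerator and denominator, and each $G^{(\eps)}_{r_m}$ being analytic off $z=0$ so that the per-group contours in the denominator may be deformed to match the nested ones of the numerator ---
\begin{equation*}
\E\Bigl[\prod_{m=1}^M\mathcal{O}_{r_m}\Bigr]=\frac{\displaystyle\oint\!\cdots\!\oint\ \Bigl(\prod_{1\le\alpha<\beta\le M}\ \prod_{\substack{1\le i\le r_\alpha\\ 1\le j\le r_\beta}}\frac{1-z_{j,\beta}/z_{i,\alpha}}{1-t^{-1}z_{j,\beta}/z_{i,\alpha}}\Bigr)\prod_{m=1}^M G^{(\eps)}_{r_m}(z_{\bullet,m})}{\displaystyle\prod_{m=1}^M\ \oint\!\cdots\!\oint\ G^{(\eps)}_{r_m}(z_{\bullet,m})},
\end{equation*}
where the common contours are circles about $0$ of radius $2^{-\alpha}$ for the variables of group $\alpha$ (valid in \Cref{thm:observable_formula} for all small $\eps$, and keeping $|z_{j,\beta}/z_{i,\alpha}|\le\tfrac12$ whenever $\alpha<\beta$). \textbf{Step 3 (first order in $\eps$).} On these contours $\frac{1-u}{1-e^{\eps}u}=1+\eps\frac{u}{1-u}+O(\eps^2)$ uniformly for $|u|\le\tfrac12$, so the cross-product equals $1+\eps\sum_{\alpha<\beta}\sum_{i,j}\frac{z_{j,\beta}}{z_{i,\alpha}-z_{j,\beta}}+O(\eps^2)$; substituting, the ``$1$'' contributes the value $1$ to the ratio, while by dominated convergence the $O(\eps)$ term factorises over the pair $(\alpha,\beta)$ it couples and $G^{(\eps)}\to F_\tau$, giving
\begin{equation*}
\E\Bigl[\prod_{m=1}^M\mathcal{O}_{r_m}\Bigr]=1+\eps\!\!\sum_{1\le\alpha<\beta\le M}\!\!C_{r_\alpha,r_\beta}+o(\eps),\qquad\eps\to0^+,
\end{equation*}
where $C_{r,s}$ ($r\ge s$) is exactly the right-hand side of \eqref{eq:cov_with_vandermonde} and $C_{s,r}:=C_{r,s}$; here one uses $\oint\!\cdots\!\oint F_\tau(z_1,\dots,z_k)=k!(2\pi i)^k(-1)^{\binom{k}{2}}\lim_{\eps\to0}\E[e^{\eps N_k}]\neq0$ since $e^{\eps N_k}\ge1$.

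\textbf{Step 4 (CLT).} The displayed first-order moment asymptotics hold for every finite multiset $\{r_1,\dots,r_M\}\subseteq\{1,\dots,n\}$, with $\E\mathcal{O}_r=1$ and all joint moments finite; this is precisely the hypothesis of the general central limit theorem of \cite{borodin2015general}, whose conclusion is that $(\xi_1,\dots,\xi_n)$ converges in distribution to a centred Gaussian vector with covariance matrix $(C_{r,s})_{r,s=1}^n$. Together with Step 1 this yields \Cref{thm:gaussianity}, the covariance formula \eqref{eq:cov_with_vandermonde} being $C_{r,s}$ for $r\ge s$.

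\textbf{Expected main obstacle.} The crux is Step 4: the first-order moment data of Step 3 must be packaged into exactly the form required by the criterion of \cite{borodin2015general} and that criterion invoked correctly. It is only through the structure of that criterion that Gaussianity follows --- naive moment matching for the $\xi_r$ would require controlling the $t$-moment integrals to all orders in $\eps$, which we avoid. A secondary, more routine point is the justification of the contour deformations equating the denominator's contours with the numerator's, and of the uniform-in-$\eps$ bounds underlying the dominated convergence in Step 3.
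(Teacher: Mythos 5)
Your proposal tracks the paper's argument closely through Steps~1--3: reduce to the conjugate-partition coordinates via \Cref{thm:HL_qTASEP_connection}, pass to the multiplicative observables $t^{-N_r}$ (you normalize by $\E[t^{-N_r}]$ to get $\mathcal{O}_r$, the paper works with $V_r = t^{-N_r}$ directly --- a cosmetic difference), invoke \Cref{thm:observable_formula}, and extract the first-order-in-$\eps$ behaviour of the cross-factors $\frac{1-u}{1-t^{-1}u} = 1 + \eps\frac{u}{1-u} + O(\eps^2)$ to identify the covariance kernel. This all matches the paper.

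The gap is in Step~4. What \cite[Lemma~4.2]{borodin2015general} delivers from the moment asymptotics you establish is joint Gaussianity of the \emph{normalized multiplicative observables} $(\mathcal{O}_r - 1)/\sqrt{\eps}$ (equivalently, of the paper's $Y_r(\eps,\tau) = (V_r - \E V_r)/\sqrt{\eps}$), not of the linear fluctuations $\xi_r = \eps^{1/2}(N_r - \E N_r)$. These two families are related by the nonlinear transformation $V_r = e^{\eps\E N_r + \sqrt{\eps}\,\xi_r}$, and passing Gaussianity from one to the other is a genuine additional step: the paper handles it by Taylor-expanding the exponential, using tightness of $Y_r(\eps,\tau)$ (implied by the moment bounds) to deduce tightness of $\xi_r$, and then applying Prokhorov's theorem, with a pointer to the self-contained treatment in \cite[Proposition~4.1]{borodin2018anisotropic}. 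Your proposal asserts flatly that the conclusion of the CLT lemma is already about $(\xi_1,\dots,\xi_n)$, which is not the case, and your ``Expected main obstacle'' paragraph frames the difficulty as one of correctly packaging hypotheses rather than the need for this separate exponential-to-linear transfer argument. Without it the proof is incomplete.

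A minor secondary point: in Step~2 you claim the per-group denominator contours can simply be deformed to match the nested numerator contours because the $G^{(\eps)}_{r_m}$ are analytic off $z=0$; this is true, but the cleaner way to say it (and what the paper does implicitly) is that the denominator integrals are the $M=1$ case of \Cref{thm:observable_formula}, so one is free to choose those contours to coincide with the numerator's from the start.
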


\begin{remark}\label{rmk:formula_not_symmetric}
We note that the integrand in the formula for covariances \eqref{eq:cov_with_vandermonde} is not symmetric in $r,s$, and in fact the formula is \emph{not} valid if $r<s$. The same is true of the simplified formula \eqref{eq:final_prelimit_cov} which will be derived from it below in \Cref{thm:aux_limit_and_nicer_covariances}.
\end{remark}


\begin{proof}[Proof of {\Cref{thm:gaussianity}}]
Since $x_i\left(\tfrac{\tau}{1-t}\right) = \lambda_i'(\tau) - i$, 
\begin{equation*}
    X_\tau^{(i,\eps)} = \eps^{1/2}\left(\lambda_i'(\tau) - \E\left[\lambda_i'(\tau)\right]\right)
\end{equation*} 
Clearly it suffices to show that the family of random variables $(X_\tau^{(1,\eps)} + \ldots + X_\tau^{(r,\eps)})_{r \geq 1}$ converge jointly to the Gaussian family $(X_\tau^{(1)} + \ldots + X_\tau^{(r)})_{r \geq 1}$. We will first show that another family of random variables 
\[
V_r(\eps,\tau) := t^{-(\l_1'(\tau)+\ldots+\l_r'(\tau))}, r \geq 1
\]
converges jointly to the Gaussian family $(X_\tau^{(1)} + \ldots + X_\tau^{(r)})_{r \geq 1}$ after appropriate scaling, and then argue this suffices.

\Cref{thm:observable_formula} gives us contour integral formulas for all joint moments of these random variables, so it is a matter of analyzing these integral formulas. We will use the general Gaussianity lemma given as Lemma $4.2$ of \cite{borodin2015general}, which has a self-contained presentation in Section $4.3$ of the same paper.

Let
\begin{align}
    Cr_\eps(r_\alpha,\bm{z}_\alpha; r_\beta, \bm{z}_\beta) &= \prod_{\substack{ 1 \leq i \leq r_\alpha \\ 1 \leq j \leq r_\beta}}\frac{1-z_{j,\beta}/z_{i,\alpha}}{1-t^{-1}z_{j,\beta}/z_{i,\alpha}} \\
    \mf F_\eps(r_m,\bm{z}_m) &= \frac{(-1)^{\binom{r_m}{2}}}{r_m!(2 \pi i)^{r_m}}\Delta(z_{1,m},\ldots,z_{r_m,m})^2 \prod_{s=1}^{r_m} 
    \frac{1+t^{-1}z_{s,m}^{-1}}{z_{s,m}^{r_m}}e^{\tau z_{s,m}}
\end{align}
where $\bm{z}_\alpha$ is shorthand for the tuple of variables $z_{1,\alpha},\ldots,z_{r_\alpha,\alpha}$, so that \Cref{thm:observable_formula} reads
\begin{equation}
    \E[V_{r_1}(\eps,\tau) \cdots V_{r_m}(\eps,\tau)] = \oint \cdots \oint \prod_{1 \leq \alpha < \beta \leq k} Cr_\eps(r_\alpha,\bm{z}_\alpha; r_\beta, \bm{z}_\beta ) \prod_{s=1}^k {\mf F}_\eps(r_m,\bm{z}_m) d\bm{z}_m.
\end{equation}
We have
\begin{equation}
    Cr_\eps(r_\alpha,\bm{z}_\alpha; r_\beta, \bm{z}_\beta) = 1 + \eps \mf{Cr}_\eps(r_\alpha,\bm{z}_\alpha; r_\beta, \bm{z}_\beta),
\end{equation}
and uniform convergence $\mf{Cr}_\eps \to \mf{Cr}$ and $\mf{F_\eps} \to \mf{F}$ along the contours of interest, where
\begin{align}
    \mf{Cr}(r_\alpha,\bm{z}_\alpha; r_\beta, \bm{z}_\beta) &= \sum_{\substack{ 1 \leq i \leq r_\alpha \\ 1 \leq j \leq r_\beta}} \frac{z_{j,\beta}}{z_{i,\alpha}-z_{j,\beta}} \\
    \mf F(r_\alpha,\bm{z}_\alpha) &= \frac{(-1)^{\binom{r_m}{2}}}{r_m!(2 \pi i)^{r_m}}\Delta(z_{1,\alpha},\ldots,z_{r_\alpha,\alpha})^2 \prod_{s=1}^{r_\alpha} \frac{e^{\tau z_{s,\alpha}}(1+z_{s,\alpha})}{z_{s,\alpha}^{r_\alpha+1}}.
\end{align}
By \cite[Lemma 4.2]{borodin2015general}\footnote{In the notation of \cite{borodin2015general} one should take $\eps = L^{-1}$ and $\gamma = 1$.}, this implies that the random variables 
\begin{equation}
    \label{eq:def_Y}
    Y_r(\eps,\tau) := \frac{V_r(\eps,\tau) - \E[V_r(\eps,\tau)]}{\sqrt{\eps}}
\end{equation}
converge jointly to the mean $0$, jointly Gaussian family $(Y_r(\tau))_{r \geq 1}$ having covariance
\begin{align}\label{eq:postlimit_cov}
\begin{split}
    &\Cov(Y_r(\tau); Y_s(\tau)) \\
    &= \frac{\displaystyle \oint dz_{2,1} \cdots \oint dz_{2,s} \oint dz_{1,1} \cdots \oint dz_{1,r} \mf{Cr}(r,\bm{z}_1; s, \bm{z}_2) \mf F(r,\bm{z}_1) \mf F(s,\bm{z}_2)}{\left(\displaystyle \oint dz_{2,1} \cdots \oint dz_{2,s} \mf F(s,\bm{z}_2) \right)\left(\displaystyle \oint dz_{1,1} \cdots \oint dz_{1,r} \mf F(r,\bm{z}_1)\right)}.
\end{split}
\end{align}
After cancelling the $\frac{(-1)^{\binom{r_m}{2}}}{r_m!(2 \pi i)^{r_m}}$ terms in the numerator and denominator this is exactly the RHS of \eqref{eq:cov_with_vandermonde}, hence we indeed have that $(Y_r(\eps,\tau))_{r \geq 1}$ converges jointly to $(X_\tau^{(1)} + \ldots + X_\tau^{(r)})_{r \geq 1}$. 

It remains to show that $(X^{(1,\eps)}_\tau+\ldots+X^{(r,\eps)}_\tau)_{r \geq 1}$ also converges jointly to $(X_\tau^{(1)} + \ldots + X_\tau^{(r)})_{r \geq 1}$. At a heuristic level this makes perfect sense by Taylor expanding the exponential in 
\[
V_r(\eps,\tau) = e^{\eps \left(\E[\l_1'(\tau)+\ldots+\l_r'(\tau)] + \eps^{1/2}\left(X^{(1,\eps)}_\tau+\ldots+X^{(r,\eps)}_\tau\right)\right)}
\]
in \eqref{eq:def_Y}, as the leading-order nonconstant term is $\text{const}\cdot (X^{(1,\eps)}_\tau+\ldots+X^{(r,\eps)}_\tau)$ and the others are small in $\eps$. To make this rigorous one uses (joint) tightness in $\eps$ of the random variables $Y_r(\eps,\tau)$ to show joint tightness of the random variables $X^{(1,\eps)}_\tau+\ldots+X^{(r,\eps)}_\tau$, which are related by a simple transformation, and then argues using Prokhorov's theorem, the convergence of $Y_r(\eps,\tau)$ and the previous Taylor expansion. The details are given in the proof of Proposition $4.1$ in \cite{borodin2018anisotropic}, where our $Y_r(\eps,\tau)$ corresponds to their $Y_r^\eps$, the analogue of the Gaussian convergence for $Y_r(\eps,\tau)$ is Lemma $4.4$ of \cite{borodin2018anisotropic}, and with these two substitutions the proof carries over mutatis mutandis in our setting.
\end{proof}

We note that for a family of random variables with an only slightly different integral formula for covariances, the analogue of the above Gaussian convergence argument is written in a self-contained manner in the proof of \cite[Proposition 4.1]{borodin2018anisotropic}. For a reader wishing to understand all the details of the proof, this might be easier to read than the proof in \cite{borodin2015general} of the general Gaussianity lemma used in our condensed version. 


Returning to our setting, the formula in \Cref{thm:gaussianity} simplifies greatly upon taking another limit $\tau \to \infty$. As was argued in the introduction, and will be fleshed out in the next section, this limit reflects convergence to stationarity of the original particle system (with an additional time change).

\begin{proposition}\label{thm:aux_limit_and_nicer_covariances}
As $\tau \to  \infty$, the random variables $X^{(i)}_\tau$ converge in distribution to a Gaussian random vector $(\zeta_i)_{i \geq 1}$, with covariances given by 
\begin{equation}\label{eq:final_prelimit_cov}
    \Cov(\zeta_r;\zeta_s) = \frac{1}{4 \pi^2} \displaystyle \oint_{\Gamma_0} \oint_{\Gamma_{0,w}} \frac{w}{z-w} \frac{r! s!}{z^{r}w^{s}}e^{z+w}(1-z/r)(1-w/s)\frac{dz}{z} \frac{dw}{w}
\end{equation}
for each $r \geq s \geq 1$.
\end{proposition}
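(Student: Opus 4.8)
By \Cref{thm:gaussianity} the vectors $(X^{(1)}_\tau,\ldots,X^{(n)}_\tau)$ are mean-zero Gaussian, so convergence in distribution as $\tau\to\infty$ is equivalent to convergence of the covariance matrix, and the limit is then automatically Gaussian. Since moreover the covariances of $(\zeta_i)$ are determined from those of the partial sums $S^\tau_r:=X^{(1)}_\tau+\cdots+X^{(r)}_\tau$ by inclusion--exclusion,
\[
\Cov(\zeta_r;\zeta_s)=\lim_{\tau\to\infty}\big(\Cov(S^\tau_r;S^\tau_s)-\Cov(S^\tau_{r-1};S^\tau_s)-\Cov(S^\tau_r;S^\tau_{s-1})+\Cov(S^\tau_{r-1};S^\tau_{s-1})\big),
\]
the whole proof reduces to computing the $\tau\to\infty$ limit of the right-hand side of \eqref{eq:cov_with_vandermonde}.

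To pass to the limit I would rescale: substituting $z_{\ell,i}\mapsto z_{\ell,i}/\tau$ in \eqref{eq:cov_with_vandermonde} and taking the original contours to be circles of radii $\rho_1/\tau>\rho_2/\tau$, the new contours become the fixed circles $|z_{1,i}|=\rho_1$, $|z_{2,j}|=\rho_2$. Under this change $e^{\tau z}\mapsto e^z$, the factor $\frac{z_{2,j}}{z_{1,i}-z_{2,j}}$ is invariant, all powers of $\tau$ coming from the monomials $z_i^{-(k+1)}$, the Vandermondes, and the Jacobians cancel between the numerator and the two normalizing integrals of \eqref{eq:cov_with_vandermonde}, and the only remaining $\tau$-dependence sits in the factors $1+z/\tau$ (from the $1+z_i$ in $F_\tau$), which tend to $1$ uniformly on the now compact, $\tau$-independent contours. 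Dominated convergence then shows $\lim_{\tau\to\infty}\Cov(S^\tau_r;S^\tau_s)$ is the same expression with each $F_\tau(z_1,\ldots,z_k)$ replaced by $\widehat F_k(z_1,\ldots,z_k):=\Delta(z_1,\ldots,z_k)^2\prod_{i}e^{z_i}/z_i^{k+1}$.

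It remains to reduce this $(r+s)$-fold integral to a double integral, which I would do via the orthogonal-polynomial method of \cite[\S5.1]{borodin2018anisotropic}. Expanding $\frac{z_{2,j}}{z_{1,i}-z_{2,j}}=\sum_{m\ge1}(z_{2,j}/z_{1,i})^m$ (valid since $|z_{2,j}|<|z_{1,i}|$) and interchanging the sum with the integrals decouples the two groups, so the limit equals $\sum_{m\ge1}a^{(m)}_r b^{(m)}_s$ with $a^{(m)}_r$ (resp.\ $b^{(m)}_s$) the normalized expectation of the linear statistic $\sum_i z_i^{-m}$ (resp.\ $\sum_j z_j^{m}$) in the determinantal ensemble of $r$ (resp.\ $s$) points on a circle around $0$ with weight $w_k(z)=e^{z}/z^{k+1}$. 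The contour moments $\oint z^n w_k(z)\,dz=2\pi i/(k-n)!$ ($n\le k$, else $0$) are explicit, hence so are the monic orthogonal polynomials, their norms, and their recurrence coefficients (for instance $P^{(k)}_1(z)=z-k$); the Christoffel--Darboux formula writes each $a^{(m)}_r,b^{(m)}_s$ as a finite sum in these data, and resumming over $m$ restores $\frac{w}{z-w}$, giving $\lim_{\tau\to\infty}\Cov(S^\tau_r;S^\tau_s)=\oint\oint\frac{w}{z-w}G_r(z)H_s(w)\,dz\,dw$, where $G_r:=\frac{e^{z}}{z^{r+1}}\sum_{j=0}^{r-1}\frac{(P^{(r)}_j)^2}{h^{(r)}_j}$ and similarly $H_s$. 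Performing the $z$-integral by residues (at $z=w$ and at $z=0$) collapses everything to a bilinear pairing of the Taylor coefficients of $G_r$ at $0$ with the principal-part coefficients of $H_s$ at $0$; a direct computation of the orthogonal-polynomial data then identifies the inclusion--exclusion combination above with $\frac{1}{4\pi^2}\oint\oint\frac{w}{z-w}\frac{r!\,s!}{z^{r}w^{s}}e^{z+w}(1-z/r)(1-w/s)\frac{dz}{z}\frac{dw}{w}$, which is \eqref{eq:final_prelimit_cov}.

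The substantive part is this last identification: one must make the Christoffel--Darboux/orthogonal-polynomial bookkeeping explicit enough that, after the differences in $r$ and $s$, the surviving contribution is precisely the weight $\frac{r!\,e^{z}(1-z/r)}{z^{r+1}}$ (and likewise for $w$), with the correct constants---equivalently, one needs a closed form for the Taylor coefficients of $G_r-G_{r-1}$. One also has to justify interchanging $\sum_{m\ge1}$ with the contour integrals, which is routine on the compact, well-separated contours. Finally, as noted in \Cref{rmk:formula_not_symmetric}, the geometric expansion needs the $z_2$-contour strictly inside the $z_1$-contour, which is why \eqref{eq:final_prelimit_cov} holds only for $r\ge s$; the case $r<s$ follows from $\Cov(\zeta_r;\zeta_s)=\Cov(\zeta_s;\zeta_r)$.
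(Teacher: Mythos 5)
Your proposal follows the same strategy as the paper's proof: reduce to the $\tau\to\infty$ limit of \eqref{eq:cov_with_vandermonde} via inclusion--exclusion and a rescaling $z\mapsto z/\tau$; recognize the resulting $(r+s)$-fold integral as $\oint\oint\frac{w}{z-w}\rho_r(z)\rho_s(w)\,dz\,dw$ with $\rho_r$ the one-point function of an orthogonal-polynomial ensemble on a contour around $0$ with weight $e^z/z^{r+1}$ (your $G_r$); and read off the surviving Laurent coefficients after the double difference in $(r,s)$. The early steps---the rescaling, uniform convergence of the $(1+z/\tau)$ factors, the reduction of the cross-sum by symmetry, and the observation that $r\ge s$ is forced by the contour nesting---all match the paper.

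The gap, which you flag yourself, is the passage from $\rho_r(z)$ to a closed form usable in the residue extraction. You propose Christoffel--Darboux, but the paper does something cleaner using the explicit Laguerre-type formulas of \cite[Lemma 5.3]{borodin2018anisotropic}. The crucial trick is to extend the Christoffel--Darboux sum by one term, $\sum_{k=0}^{r-1}\to\sum_{k=0}^{r}$, because the extended sum telescopes: using the integral representation $p_k^{r+1}(z)=\frac{1}{(r-k)!}\int_0^\infty(y-z)^k y^{r-k}e^{-y}\,dy$, one finds
\[
\sum_{k=0}^r\frac{p_k^{r+1}(z)^2}{\langle p_k^{r+1},p_k^{r+1}\rangle_{r+1}}=\frac{z^r}{r!}\int_0^\infty\int_0^u(u-z)^r e^{-u}\,du\,dy = \frac{z^r}{r!}\left(e^{-z}\big((r+1)!+r!z\big)+O(z^{r+1})\right),
\]
whose small-$z$ Laurent data are immediate. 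Subtracting the explicit $k=r$ term, $p_r^{r+1}(z)=r!\sum_{\ell=0}^r\frac{(-z)^\ell}{\ell!}$ (a truncated exponential whose square modulo $O(z^{2r+2})$ is $e^{-z}$ times a sign-flipped exponential), then gives the needed Laurent coefficients of $\rho_r(z)$, and the $(1-z/r)(1-w/s)$ factors in \eqref{eq:final_prelimit_cov} fall out of the $(r,s)$-inclusion--exclusion. A direct Christoffel--Darboux computation would have to differentiate two Laguerre-type polynomials against an $r$-dependent weight, and then still track the $r$-dependence between polynomials orthogonal to $e^z/z^{r+1}$ versus $e^z/z^{r}$---not impossible, but not the routine bookkeeping your outline suggests. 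So the proposal is a correct skeleton with a genuine hole exactly where the closed form for $G_r$ (and hence for the Taylor coefficients of $G_r-G_{r-1}$) should go.
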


\begin{proof}
First note that by symmetry of $F_\tau$, we may replace 
\begin{equation*}
    \sum_{\substack{1 \leq i \leq r \\ 1 \leq j \leq s}}\frac{z_{j,2}}{z_{i,1}-z_{j,2}}
\end{equation*}
by 
\begin{equation*}
    r s \frac{z_{1,2}}{z_{1,1}-z_{1,2}}
\end{equation*}
in \eqref{eq:cov_with_vandermonde}. Now, changing variables to $z_i = \tau z_{i,1}, w_j = \tau z_{j,2}$ and cancelling the factors of $\tau$ that appear, \eqref{eq:cov_with_vandermonde} becomes
\begin{multline}\label{eq:cov_with_vandermonde2}
    \Cov(X^{(1)}_\tau+\ldots+X^{(r)}_\tau; X^{(1)}_\tau+\ldots + X^{(s)}_\tau) \\ = r s \frac{\displaystyle \oint  \cdots \oint  \frac{w_1}{z_1-w_1} \Delta(\bm{z})^2 \prod_{i=1}^r \frac{e^{z_i}(1+z_i/\tau)}{z_i^{r+1}} \Delta(\bm{w})^2 \prod_{j=1}^s \frac{e^{w_j}(1+w_j/\tau)}{w_j^{s+1}} d\bm{z}d\bm{w}}{\left(\displaystyle \oint \cdots \oint \Delta(\bm{z})^2 \prod_{i=1}^r \frac{e^{z_i}(1+z_i/\tau)}{z_i^{r+1}} d\bm{z} \right)\left(\displaystyle \oint \cdots \oint\Delta(\bm{w})^2 \prod_{j=1}^s \frac{e^{w_j}(1+w_j/\tau)}{w_j^{s+1}} d\bm{w}\right)}.
\end{multline}
Since 
\begin{equation*}
    \Delta(z_1,\ldots,z_r)^2 \prod_{i=1}^r \frac{e^{z_i}(1+z_i/\tau)}{z_i^{r+1}} \to \Delta(z_1,\ldots,z_r)^2 \prod_{i=1}^r \frac{e^{z_i}}{z_i^{r+1}} 
\end{equation*}
uniformly on the contours of integration, the RHS of \eqref{eq:cov_with_vandermonde2} converges as $\tau \to  \infty$ to the same expression with the $(1+z_i/\tau)$ and $(1+w_j/\tau)$ factors removed. In particular, because the $X^{(i)}_\tau$ are Gaussian, this implies convergence in joint distributions $X^{(i)}_\tau \to \zeta_i$, where the $\zeta_i$ form Gaussian random vectors with covariances given by
\begin{multline}
    \Cov(\zeta_1+\ldots+\zeta_r; \zeta_1+\ldots+\zeta_s) \\
    = r s \frac{\displaystyle \oint  \cdots \oint  \frac{w_1}{z_1-w_1} \Delta(z_1,\ldots,z_r)^2 \prod_{i=1}^r \frac{e^{z_i}}{z_i^{r+1}} \Delta(w_1,\ldots,w_s)^2 \prod_{j=1}^s \frac{e^{w_j}}{w_j^{s+1}} d\bm{z}d\bm{w}}{\left(\displaystyle \oint \cdots \oint \Delta(z_1,\ldots,z_r)^2 \prod_{i=1}^r \frac{e^{z_i}}{z_i^{r+1}} d\bm{z} \right)\left(\displaystyle \oint \cdots \oint\Delta(w_1,\ldots,w_s)^2 \prod_{j=1}^s \frac{e^{w_j}}{w_j^{s+1}} d\bm{w}\right)}.
\end{multline}
Rewriting the above as 
\begin{equation}\label{eq:cov_zeta_sum}
    \Cov(\zeta_1+\ldots+\zeta_r; \zeta_1+\ldots+\zeta_s) = \frac{1}{(2 \pi i)^2} \displaystyle \oint \oint \frac{w_1}{z_1-w_1} \rho_r(z_1) \rho_s(w_1) dz_1 dw_1
\end{equation}
where
\begin{equation}
    \rho_r(z_1) = \frac{r \frac{1}{(2 \pi i)^{r-1}}\displaystyle \oint \cdots \oint  \Delta(z_1,\ldots,z_r)^2 \prod_{i=1}^r \frac{e^{z_i}}{z_i^{r+1}}   dz_2 \cdots dz_r}{\frac{1}{(2 \pi i)^r} \displaystyle \oint \cdots \oint  \Delta(z_1,\ldots,z_r)^2 \prod_{i=1}^r \frac{e^{z_i}}{z_i^{r+1}}  dz_1 dz_2 \cdots dz_r},
\end{equation}
we recognize $\rho_r(z)$ as the $1$-point correlation function of the orthogonal polynomial ensemble on the contour $\Gamma_0$ with weight $\Delta(z_1,\ldots,z_r)^2 \prod_{i=1}^r \frac{e^{z_i}}{z_i^{r+1}}$. 

Let $p_k^{n}$ be the (monic) orthogonal polynomial of degree $k$ with respect to the inner product
\begin{equation}
    \la f, g \ra_{n} = \frac{1}{2 \pi i} \displaystyle \oint f(z)g(z) \frac{e^z}{z^{n}}dz.
\end{equation}
Then by the classical theory of orthogonal polynomials, see e.g. \cite{deift1999orthogonal}, one has
\begin{equation}\label{eq:rho_sum}
    \rho_r(z) = \frac{e^z}{z^{r+1}}\sum_{k=0}^{r-1} \frac{p_k^{r+1}(z)^2}{\la p_k^{r+1},p_k^{r+1} \ra_{r+1}}.
\end{equation}
This reduces the computation of \eqref{eq:cov_zeta_sum} to understanding the orthogonal polynomials $p_k^{r+1}$. For the observation above that $\rho_r(z)$ is a $1$-point correlation function we followed a similar argument in \cite{borodin2018anisotropic}, and in fact our orthogonal polynomial ensemble is a special case of the one in that paper. They prove\footnote{To be specific, one must specialize $T=1$ in the notation of \cite[Lemma 5.3]{borodin2018anisotropic} to arrive at Lemma \ref{thm:formulas_from_bf}.} the following explicit formulas by relating the $p_k^n$ to the classical Laguerre polynomials, for which similar explicit formulas are classically known.

\begin{lemma}[{\cite[Lemma 5.3]{borodin2018anisotropic}}]\label{thm:formulas_from_bf}
Let $p_k^n$ be as above. Then 
\begin{equation}\label{eq:pk_formulas}
    p_k^n(z) = \frac{k!}{(n-k-1)!} \sum_{\ell=0}^k \frac{(n-1-\ell)!}{(k-\ell)!\ell!}(-z)^\ell = \frac{1}{(n-1-k)!} \int_0^\infty (y-z)^k y^{n-1-k} e^{-y}dy.
\end{equation}
Furthermore
\begin{equation}
    \la p_k^{n},p_k^{n} \ra_{n} = (-1)^k \frac{k!}{(n-k-1)!}.
\end{equation}
\end{lemma}

We rewrite \eqref{eq:rho_sum} as 
\begin{equation}\label{eq:sum_and_pair}
    \rho_r(z) = \left(\frac{e^z}{z^{r+1}}\sum_{k=0}^{r} \frac{p_k^{r+1}(z)^2}{\la p_k^{r+1},p_k^{r+1} \ra_{r+1}}\right) - \frac{e^z}{z^{r+1}}\frac{p_r^{r+1}(z)^2}{\la p_r^{r+1},p_r^{r+1} \ra_{r+1}}
\end{equation}
and treat the two terms on the RHS separately. We first treat the sum on the RHS of \eqref{eq:sum_and_pair}, which will end up not contributing at all. By Lemma \ref{thm:formulas_from_bf},
\begin{align*}
    \sum_{k=0}^{r} \frac{p_k^{r+1}(z)^2}{\la p_k^{r+1},p_k^{r+1} \ra_{r+1}} &= \sum_{k=0}^{r} \frac{(-1)^k}{(r-k)!k!} \int_0^\infty (y-z)^k y^{r-k} e^{-y}dy \int_0^\infty (x-z)^k x^{r-k} e^{-y}dx \\
    &= \frac{1}{r!} \int_0^\infty \int_0^\infty \sum_{k=0}^r \binom{r}{k}(-(y-z)(x-z))^k (xy)^{r-k} e^{-(x+y)}dx dy \\
    &= \frac{1}{r!} \int_0^\infty \int_0^\infty \left(z(x+y)-z^2\right)^r e^{-(x+y)}dx dy \\
    &= \frac{z^r}{r!} \int_0^\infty \int_0^u (u-z)^r e^{-u} du dy \\
    &= \frac{z^r}{r!} \cdot \left(e^{-z} ((r+1)!+r! z) + O(z^{r+1})\right).
\end{align*}
Hence
\begin{equation}\label{eq:sum=simple}
   \frac{e^z}{z^{r+1}}\sum_{k=0}^{r} \frac{p_k^{r+1}(z)^2}{\la p_k^{r+1},p_k^{r+1} \ra_{r+1}} = \frac{r+1}{z} + 1 + O(z^r).
\end{equation}
By \eqref{eq:pk_formulas}, 
\begin{equation}
    p_r^{r+1}(z) = r! \sum_{\ell=0}^r \frac{(-z)^\ell}{\ell!} = r!\left(e^{-z} - f_{r+1}(z)\right),
\end{equation}
where $f_{r+1}(z) := \sum_{\ell=r+1}^\infty \frac{(-z)^\ell}{\ell!}$ is just the sum of terms of degree $\geq r+1$ in the Taylor series for $e^{-z}$. Hence 
\begin{multline}
    \left(e^{-z} - f_{r+1}(z)\right)^2 = e^{-2z} - 2 e^{-z}f_{r+1}(z) + O(z^{2r+2}) \\
    = e^{-z}\left(\sum_{\ell=0}^\infty (-1)^{\bbone(\ell \geq r+1)} \frac{(-z)^\ell}{\ell!} + O(z^{2r+2})\right),
\end{multline}
so 
\begin{equation}\label{eq:p_product_calc}
  \frac{e^z}{z^{r+1}}  \frac{p_r^{r+1}(z)^2}{\la p_r^{r+1},p_r^{r+1} \ra_{r+1}} =  \frac{r!}{(-z)^{r+1}}\sum_{\ell=0}^\infty (-1)^{\bbone(\ell \geq r+1)} \frac{(-z)^{\ell}}{\ell!} + O(z^{r+1}).
\end{equation}
Substituting \eqref{eq:sum=simple} and \eqref{eq:p_product_calc} into \eqref{eq:sum_and_pair} yields
\begin{equation}\label{eq:rho_nice_form}
    \rho_r(z) = \frac{r+1}{z} + 1  - \frac{r!}{(-z)^{r+1}}\sum_{\ell=0}^\infty (-1)^{\bbone(\ell \geq r+1)} \frac{(-z)^{\ell}}{\ell!} + O(z^{r}).
\end{equation}
Recall that 
\begin{equation}\label{eq:cov_zeta_sum2}
    \Cov(\zeta_1+\ldots+\zeta_r; \zeta_1+\ldots+\zeta_s) = \frac{1}{(2 \pi i)^2} \displaystyle \oint_{\Gamma_0} \oint_{\Gamma_{0,w}} \frac{w}{z-w} \rho_r(z) \rho_s(w) dz dw.
\end{equation}
Since $|w| < |z|$ in the region of integration we may expand $\frac{w}{z-w} = \sum_{n=1}^\infty \pfrac{w}{z}^n$ in the integrand, and then interpret the integral as the $\frac{1}{zw}$ term of the resulting Laurent series expansion for the integrand (this may be justified by applying the residue theorem first to the $z$ integral, then the $w$ integral). Since $n$ is positive in the terms $\pfrac{w}{z}^n$, this yields that only the terms of $\rho_s(w)$ of degree $\leq -2$ in $w$ contribute, and only the terms of $\rho_r(z)$ of degree $\geq 0$ contribute. The terms of degree $\leq -2$ in $\rho_s(w)$ match those of $\frac{s!}{(-w)^{s+1}}e^{-w}$, so we may substitute this for $\rho_s(w)$ in \eqref{eq:cov_zeta_sum2} without changing the integral. Because all terms in the Laurent expansion for $\rho_s(w)$ have degree $\geq -(s+1)$, we additionally have that only the terms of $\rho_r(z)$ of degree $\leq s-1$ contribute. Because $r \geq s$, we may thus ignore the $O(z^r)$ terms in \eqref{eq:rho_nice_form}. The terms of degree $0 \leq d \leq s-1$ in the Laurent expansion for $\rho_r(z)$ in \eqref{eq:rho_nice_form} are the same as those in the Laurent series expansion of $-\frac{r!}{(-z)^{r+1}}e^{-z} + 1$. Therefore
\begin{equation}\label{eq:cov_zeta_sum3}
    \Cov(\zeta_1+\ldots+\zeta_r; \zeta_1+\ldots+\zeta_s) = \frac{1}{(2 \pi i)^2} \displaystyle \oint_{\Gamma_0} \oint_{\Gamma_{0,w}} \frac{w}{z-w} \frac{s!e^{-w}}{(-w)^{s+1}}\left(-\frac{r!}{(-z)^{r+1}}e^{-z} + 1\right)  dz dw.
\end{equation}

Denoting the RHS above by $C(r,s)$, we have $\Cov(\zeta_r;\zeta_s) = C(r,s) - C(r-1,s) - C(r,s-1) + C(r-1,s-1)$. Writing
\begin{multline}
   C(r,s) =  \frac{1}{(2 \pi i)^2} \displaystyle \oint_{\Gamma_0} \oint_{\Gamma_{0,w}} \frac{w}{z-w} \frac{s!}{(-w)^{s+1}}e^{-w}\left(-\frac{r!}{(-z)^{r+1}}e^{-z}\right)  dz dw \\
   + \frac{1}{(2 \pi i)^2} \displaystyle \oint_{\Gamma_0} \oint_{\Gamma_{0,w}} \frac{w}{z-w} \frac{s!}{(-w)^{s+1}}e^{-w}  dz dw,
\end{multline}
we see that the second integral on the RHS is independent of $r$, hence its contribution cancels in $C(r,s) - C(r-1,s) - C(r,s-1) + C(r-1,s-1)$. Thus
\begin{multline}
   C(r,s) - C(r-1,s) - C(r,s-1) + C(r-1,s-1) = -\frac{1}{(2 \pi i)^2} \displaystyle \oint_{\Gamma_0} \oint_{\Gamma_{0,w}} \frac{w}{z-w} e^{-z-w} \\
   \left(\frac{r!s!}{(-z)^{r+1}(-w)^{s+1}} - \frac{(r-1)!s!}{(-z)^{r}(-w)^{s+1}} - \frac{r!(s-1)!}{(-z)^{r+1}(-w)^{s}}+\frac{(r-1)!(s-1)!}{(-z)^{r}(-w)^{s}}\right)dzdw \\
   = \frac{1}{4 \pi^2} \displaystyle \oint_{\Gamma_0} \oint_{\Gamma_{0,w}} \frac{w}{z-w} \frac{r! s!}{(-z)^{r+1}(-w)^{s+1}}e^{-z-w}(1+z/r)(1+w/s)dz dw.
\end{multline}
Changing variables to $-z, -w$ yields \eqref{eq:final_prelimit_cov}, completing the proof.
\end{proof}

\section{Long-time SDEs and stationarity of fluctuations}\label{sec:SDEs}

In the limit $t=e^{-\eps}, \text{time}=\tau/(1-t)$, we previously derived Gaussian fluctuations $X^{(k)}_\tau$ for the particle positions, with explicit covariances which simplify in the large-time limit $\tau \to \infty$. In this section, we consider the probabilistic meaning of this limit. For particle systems such as ours, one may rigorously show convergence of the multi-time fluctuations to the solution of a system of SDEs as in \cite[Theorem 1]{borodin2017stochastic}, but we will instead give a (simpler and more intuitive) formal derivation of such a system of SDEs which closely follows that of \cite[Proposition 4.6]{borodin2018anisotropic}. After making a time change 
\[
Z_T^{(k)} := X_{e^T-1}^{(k)},
\]
this yields a system of SDEs for the $Z_T^{(k)}$ with time-dependent coefficients. As $T \to \infty$ these coefficients converge to nontrivial limits, yielding the system of SDEs
\begin{equation}\label{eq:final_SDEs}
    dZ^{(k)}_T = \left((k-1)Z^{(k-1)}_T - k Z^{(k)}_T\right)dT + dW^{(k)}_T \quad \quad \quad k=1,2,\ldots
\end{equation}
Though the derivation of the SDEs satisfied by the $X^{(k)}_\tau$ consisted of formal algebraic manipulations and is certainly not a rigorous analytic proof, we will check rigorously in \Cref{thm:stationarity} using contour integral formulas that the unique Gaussian stationary distribution of the system \eqref{eq:final_SDEs} is in fact the single-time limit of the fluctuations derived in the previous section.

We now proceed to the heuristic derivation of SDEs for $X^{(k)}_\tau, k=1,2,\ldots$. Because each particle jumps according to an independent Poisson clock with rate depending only on its position and that of the particle in front, the fluctuations should satisfy an SDE of the form 
\[
dX_\tau^{(k)} = f(\tau,X_\tau^{(k-1)},X_\tau^{(k)})d\tau + g(\tau,X_\tau^{(k-1)},X_\tau^{(k)})dB^{(k)}_\tau,
\]
and it remains to compute the drift and diffusion coefficients. To find the drift $f(\tau,X_\tau^{(k-1)},X_\tau^{(k)})$, we take expectations of both sides to eliminate the diffusion part. Hence we must compute the $\eps \to 0$ limit of the $O(d\tau)$ term in 
\begin{equation}\label{eq:drift_term}
\E\left[X^{(k,\eps)}_{\tau+d\tau}-X^{(k,\eps)}_\tau\right] = -\eps^{-1/2}(c_k(\tau+d\tau)-c_k(\tau)) + \eps^{1/2}\E\left[ \lambda_k'(\tau+d\tau) - \lambda_k'(\tau)\right],
\end{equation}
where $c_k(\tau)$ is the limit of $\eps x_k(\tau/\eps)$ given explicitly in \eqref{eq:def_c}. The jump rate of $\lambda_k'(\tau)$ is approximately constant on the interval $d\tau$, and equal to 
\begin{equation}\label{eq:jump_rate}
 \frac{1}{1-t}\left(t^{\l_k'(\tau)} - t^{\l_{k-1}'(\tau)}\right) \sim \eps^{-1} \left(e^{-(c_k(\tau) + \eps^{1/2}X^{(k,\eps)}_\tau)} - e^{-(c_{k-1}(\tau) + \eps^{1/2}X^{(k-1,\eps)}_\tau)}\right) \quad \quad \text{ as $\eps \to 0$}   
\end{equation}
where to obtain the RHS we use that $1-t \approx \eps$. Therefore 
\begin{align}\label{eq:rate_2nd_term}
\begin{split}
    &\eps^{1/2}\E\left[ \lambda_k'(\tau+d\tau) - \lambda_k'(\tau)\right] \sim \eps^{-1/2}d\tau\left(e^{-(c_k(\tau) + \eps^{1/2}X^{(k,\eps)}_\tau)} - e^{-(c_{k-1}(\tau) + \eps^{1/2}X^{(k-1,\eps)}_\tau)} \right) \\
    & \sim \eps^{-1/2}d\tau \left(e^{-c_k(\tau)} - e^{-c_{k-1}(\tau)} \right) - \left(e^{-c_k(\tau)}X^{(k,\eps)}_\tau - e^{-c_{k-1}(\tau)}X^{(k-1,\eps)}_\tau \right) 
    \end{split}
\end{align}
as $\eps \to 0$. The other term on the RHS of \eqref{eq:drift_term} is 
\begin{align}\label{eq:cprime}
\begin{split}
    -\eps^{-1/2}(c_k(\tau+d\tau)-c_k(\tau)) &= -\eps^{-1/2}c_k'(\tau)d\tau + O(d\tau^2) \\
    &= -\eps^{-1/2}\left(e^{-c_k(\tau)} - e^{-c_{k-1}(\tau)} \right)d\tau + O(d\tau^2)
\end{split}
\end{align}
by the differential equation \eqref{eq:c_ODE}. Combining \eqref{eq:rate_2nd_term} and \eqref{eq:cprime} yields a term which converges as $\eps \to 0$, hence the drift coefficient is
\begin{equation}\label{eq:final_drift_coef}
    f(\tau,X_\tau^{(k-1)},X_\tau^{(k)}) = \lim_{\eps \to 0} \text{RHS{\eqref{eq:drift_term}}} = - \left(e^{-c_k(\tau)}X^{(k)}_\tau - e^{-c_{k-1}(\tau)}X^{(k-1)}_\tau \right).
\end{equation}
We now compute the diffusion coefficient, which is the $O(d\tau)$ term in
\begin{equation}
    \label{eq:var_X_lam}
    \operatorname{Var}(X^{(k,\eps)}_{\tau+d\tau}-X^{(k,\eps)}_\tau) = \eps \operatorname{Var}(\l'_k(\tau+d\tau) - \l'_k(\tau)).
\end{equation}
We approximate the jump rate to be constant as before, so that 
\[
\l'_k(\tau+d\tau) - \l'_k(\tau)
\]
is a Poisson random variable with parameter equal to the time step $d\tau$ times its jump rate approximated earlier in \eqref{eq:jump_rate}. Since variance of $\text{Pois}(r)$ is $r$, 
\begin{equation}
    \text{RHS}\eqref{eq:var_X_lam} = \left(e^{-c_k(\tau)} - e^{-c_{k-1}(\tau)}\right)d\tau + o(1).
\end{equation}
Hence 
\begin{equation}\label{eq:diffusion_final_formula}
    g(\tau,X_\tau^{(k-1)},X_\tau^{(k)}) = \sqrt{e^{-c_k(\tau)} - e^{-c_{k-1}(\tau)}}.
\end{equation}
Combining \eqref{eq:final_drift_coef} with \eqref{eq:diffusion_final_formula}, we have derived (again, at a heuristic level) that the $\eps \to 0$ limits $X^{(k)}_\tau$ satisfy the system 
\begin{equation}\label{eq:X_SDE}
    dX^{(k)}_\tau = - \left(e^{-c_k(\tau)}X^{(k)}_\tau - e^{-c_{k-1}(\tau)}X^{(k-1)}_\tau \right)d\tau + \sqrt{e^{-c_k(\tau)} - e^{-c_{k-1}(\tau)}} dB^{(k)}_\tau \quad \quad k=1,2,\ldots 
\end{equation}
where as before we take $c_0(\tau) \equiv \infty$ identically in the case $k=1$. 

Exponentiating the explicit formula \eqref{eq:def_c} for $c_k(\tau)$ yields
\begin{equation}\label{eq:recall_ck}
    e^{-c_k(\tau)} = \frac{1+\ldots+\frac{\tau^{k-1}}{(k-1)!}}{1+\ldots+\frac{\tau^k}{k!}}.
\end{equation}

Naively taking the $\tau \to \infty$ limit of the diffusion coefficient in \eqref{eq:X_SDE} yields $0$, which reflects the fact that particles' jump rates go to $0$ as their positions go to $\infty$ due to the position-dependent slowing. However, the prelimit system also suggests a natural time change to obtain time-independent diffusion rates. The jump rate of $\l_1'$ is $t^{\l_1'}$, so to make this jump rate independent of time one must speed up time by a factor of $t^{-\l_1'}$---which, note, depends on the random position of $\l_1'$. More precisely, if $s$ is the time variable in the original particle system, then letting $h(s)$ be the piecewise-linear random function with $h'(s) = t^{-\l_1'(h(s))}$, one has that $\l_1'(h(s))$ jumps according to a rate $1$ Poisson process. Hence its position is a Poisson random variable with mean $s$. Since it concentrates around its mean at large $s$, we have $h'(s) \approx t^{-s}$ and hence
\[
h(s) \approx \frac{t^{-s}}{-\log t}
\]
for large $s$. This suggests that the random time change by $t^{-\l_1'}$ can be approximated at large times by a deterministic exponential time change, so we make an exponential time change $\tau = e^T$ in the limit SDEs \eqref{eq:X_SDE}. For notational convenience let us instead shift slightly and take $\tau = e^T-1$ so that $T$ begins at $0$. Setting $Z_T^{(k)} := X^{(k)}_{e^T-1}$ in \eqref{eq:X_SDE}, one has $d\tau = e^T dT$ and $dB^{(k)}_T = \sqrt{e^T} dW^{(k)}_T$ for $W^{(k)}_T$ independent standard Brownian motions, yielding
\begin{equation}\label{eq:Z_SDE}
    dZ^{(k)}_T = - \left(e^{-c_k(e^T-1)}Z^{(k)}_T - e^{-c_{k-1}(e^T-1)}Z^{(k-1)}_T\right)e^T dT + \sqrt{e^T\left(e^{-c_k(e^T-1)} - e^{-c_{k-1}(e^T-1)}\right)} dW^{(k)}_T
\end{equation}
Plugging in \eqref{eq:recall_ck} we obtain 
\[
dZ^{(k)}_T = \left( -kZ^{(k)}_T + (k-1)Z^{(k-1)}_T + o(1)\right) dT + (1+o(1)) dW^{(k)}_T \quad \quad k=1,2,\ldots 
\]
which converges to \eqref{eq:final_SDEs}. This mirrors the convergence of the covariances of particle fluctuations without rescaling as $\tau \to \infty$, shown in \Cref{thm:aux_limit_and_nicer_covariances}, and the main result of this section is that the SDEs \eqref{eq:final_SDEs} indeed have a stationary solution with the exact covariances of \Cref{thm:aux_limit_and_nicer_covariances}. 

We note also for concreteness that the SDE for $Z^{(1)}_T$ is exactly that of an Ornstein-Uhlenbeck process, and the mean-reversion reflects the fact that the jump rate of $\l_1'$ is smaller when it is further ahead and larger when it is further behind. The dependence of the drift term on $Z^{(k)}_T, Z^{(k-1)}_T$ likewise reflects the prelimit dependence of a particle's jump rate on its own position and that of the particle in front.  

Let us now proceed rigorously. We first check that it makes sense to speak of \emph{the} solution to \eqref{eq:final_SDEs}.

\begin{lemma}
    \label{thm:SDE_uniqueness}
Strong existence and uniqueness hold for the system of SDEs
\[
dZ^{(k)}_T = \left((k-1)Z^{(k-1)}_T - k Z^{(k)}_T\right)dT + dW^{(k)}_T \quad \quad \quad T \geq 0 , k=1,2,\ldots
\]
stated earlier as \eqref{eq:final_SDEs}. 
\end{lemma}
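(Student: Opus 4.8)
The plan is to exploit the \emph{triangular} structure of the system: the equation for $Z^{(k)}_T$ involves only $Z^{(k)}_T$ and $Z^{(k-1)}_T$, so the infinite system can be solved one coordinate at a time by induction on $k$, reducing it to a countable sequence of one-dimensional inhomogeneous linear SDEs, each of which is classically well-posed. Fix an $\mathcal{F}_0$-measurable initial datum $(Z^{(k)}_0)_{k \geq 1}$ and work on the filtered space carrying the independent standard Brownian motions $W^{(k)}$.

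For the base case $k=1$, the equation $dZ^{(1)}_T = -Z^{(1)}_T\,dT + dW^{(1)}_T$ is that of an Ornstein--Uhlenbeck process, for which strong existence and pathwise uniqueness are classical, the coefficients being globally Lipschitz. For the inductive step, suppose $Z^{(k-1)}$ has been constructed as the unique continuous $\mathcal{F}_T$-adapted solution of the $(k-1)$-st equation. Then the $k$-th equation is linear in the unknown $Z^{(k)}$ with constant coefficient $-k$ and continuous adapted inhomogeneous drift $(k-1)Z^{(k-1)}_T$; applying the integrating factor $e^{kT}$ and It\^o's formula to $e^{kT}Z^{(k)}_T$ yields the explicit pathwise representation
\[
Z^{(k)}_T = e^{-kT}Z^{(k)}_0 + (k-1)\int_0^T e^{-k(T-u)}Z^{(k-1)}_u\,du + \int_0^T e^{-k(T-u)}\,dW^{(k)}_u .
\]
The drift integral is well-defined since $Z^{(k-1)}$ is continuous hence locally bounded, and the stochastic integral is a Wiener integral of a deterministic integrand; the right-hand side is continuous in $T$ and adapted to $\sigma(Z^{(j)}_0 : j \leq k)\vee\sigma(W^{(j)}_s : j \leq k,\ s \leq T)$, which sits inside the natural filtration. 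Conversely, any solution of the $k$-th equation must satisfy this identity (again by It\^o's formula applied to $e^{kT}Z^{(k)}_T$), so the solution at level $k$ is unique once $Z^{(k-1)}$ is fixed. Linearity of all coefficients rules out explosion, so the solution is global on $[0,\infty)$.

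Assembling the recursion, the family $(Z^{(k)})_{k \geq 1}$ so constructed is a strong solution of the full system, and uniqueness propagates down the chain: the first equation pins down $Z^{(1)}$, which pins down $Z^{(2)}$, and so on, so any two strong solutions agree coordinatewise up to indistinguishability. I do not expect a genuine obstacle here. The only point requiring care is bookkeeping: verifying that continuity and adaptedness are preserved at each step of the induction so the integrating-factor formula is legitimate, and formulating "strong existence and uniqueness for the infinite system" precisely as the corresponding statement for the countable family of scalar equations produced by the triangular reduction.
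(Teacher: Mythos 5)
Your argument is correct but takes a genuinely different route from the paper's. The paper also begins by observing that the infinite system reduces to the consistent family of finite-dimensional subsystems $(Z^{(1)},\ldots,Z^{(n)})$, but for each fixed $n$ it then treats the $n$-dimensional SDE as a single object and invokes the off-the-shelf strong existence and uniqueness theorems of Karatzas--Shreve (Chapter 5.2, Theorems 2.5 and 2.9), verifying the global Lipschitz bound $\|\bm{b}(T,\bm{x})-\bm{b}(T,\bm{y})\|\leq K\|\bm{x}-\bm{y}\|$ and the linear-growth condition $\|\bm{b}(T,\bm{x})\|^2+\|\sigma\|^2\leq K^2(1+\|\bm{x}\|^2)$ with $K^2=4n^3$. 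You instead push the reduction one step further, using the strictly triangular structure to peel off one scalar equation at a time and solving each by an integrating factor, obtaining the explicit representation $Z^{(k)}_T = e^{-kT}Z^{(k)}_0 + (k-1)\int_0^T e^{-k(T-u)}Z^{(k-1)}_u\,du + \int_0^T e^{-k(T-u)}\,dW^{(k)}_u$. Your version buys an explicit closed form for the solution and avoids quoting the multidimensional theory, at the cost of a bit more bookkeeping about adaptedness and continuity along the induction; the paper's version is shorter to write down because it delegates everything to a black-box theorem. Both are complete and correct.
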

\begin{proof}
    Note that for each $n \geq 1$, the coefficients in the SDEs \eqref{eq:final_SDEs} for $(Z^{(k)}_T)_{1 \leq k \leq n}$ depend only on $(Z^{(k)}_T)_{1 \leq k \leq n}$, i.e. $(Z^{(1)}_T,\ldots,Z^{(n)}_T)$ satisfies an SDE
    \begin{equation}
        \label{eq:finite_sde}
        dZ^{(k)}_T = \left((k-1)Z^{(k-1)}_T - k Z^{(k)}_T\right)dT + dW^{(k)}_T \quad \quad \quad k=1,\ldots,n.
    \end{equation}
    in $\R^n$ driven by noise $(W^{(1)}_T,\ldots,W^{(n)}_T)$. We claim it suffices to prove strong existence and uniqueness of \eqref{eq:finite_sde} for each $n$, which we recall means that given $(Z^{(k)}_0)_{1 \leq k \leq n}$ and a fixed Brownian motion $(W^{(k)}_T)_{1 \leq k \leq n}$, there is a process solving \eqref{eq:final_SDEs} which is unique up to almost-everywhere equivalence. The claim holds because the resulting $n$-indexed family of solutions is clearly consistent under forgetting the last coordinate $Z^{(n)}_T$, hence the consistent $n$-indexed family defines a solution $(Z^{(k)}_T)_{k \geq 1}$ to the infinite system \eqref{eq:final_SDEs}.
    
    We now argue for fixed $n$ by applying off-the-shelf existence and uniqueness theorems. To aid in matching notation, let
    \begin{align*}
        b_k(T,\bm{x}) &= (k-1) x_{k-1} - k x_k \\
        \bm{b}(T,\bm{x}) &= (b_1(T,\bm{x}),\ldots,b_n(T,\bm{x})) \\
        \sigma_{ij}(T,\bm{x}) &= \bbone(i=j)
    \end{align*}
    for $T \geq 0, \bm{x} \in \R^n$, so that \eqref{eq:final_SDEs} takes the form
    \[
dZ^{(k)}_T = b_k(T,(Z^{(1)}_T,\ldots,Z^{(n)}_T))dT + \sum_{\ell=1}^n \sigma_{k \ell}(T,(Z^{(1)}_T,\ldots,Z^{(n)}_T))dW^{(\ell)}_T.
    \]
    For strong uniqueness, by \cite[Chapter 5.2, Theorem 2.5]{karatzas2014brownian} it suffices\footnote{We here state stronger and easier-to-state hypotheses than in \cite[Chapter 5.2, Theorem 2.5]{karatzas2014brownian}, which suffice for our purposes.} to show the Lipschitz property that there exists $K$ for which
    \begin{equation}\label{eq:Lipschitz}
        ||\bm{b}(T,\bm{x}) - \bm{b}(T,\bm{y})|| + ||\sigma(T,\bm{x})-\sigma(T,\bm{y})|| \leq K ||\bm{x}-\bm{y}||.
    \end{equation}
    Here the norm is the standard Euclidean one, viewing $\sigma$ as a vector in $\R^{n^2}$.
    For strong existence, by \cite[Chapter 5.2, Theorem 2.9]{karatzas2014brownian} it suffices to show \eqref{eq:Lipschitz} in addition to 
    \begin{equation}
        \label{eq:lin_growth}
        ||\bm{b}(T,\bm{x})||^2 + ||\sigma(T,\bm{x})||^2 \leq K^2 ( 1+||\bm{x}||^2).
    \end{equation}
    A crude bound shows 
    \[
||\bm{b}(T,\bm{x})||^2 \leq 4n^3 ||x||^2.
    \]
    Take $K^2=4n^3$. Since $\sigma$ is constant and $\bm{b}(T,\bm{x})$ is linear in $\bm{x}$, \eqref{eq:Lipschitz} holds. Since $||\sigma(T,\bm{x})||^2 = n$, \eqref{eq:lin_growth} holds as well, completing the proof.
\end{proof}

We now find that the explicit Gaussian vector derived in \Cref{thm:aux_limit_and_nicer_covariances} describes the stationary distribution of the above system of SDEs.

\begin{proposition}\label{thm:stationarity}
Let $(Z^{(1)}_T,Z^{(2)}_T,\ldots)$ be the vector-valued stochastic process satisfying the system of SDEs
\begin{equation}
    dZ^{(k)}_T = \left((k-1)Z^{(k-1)}_T - k Z^{(k)}_T\right)dT + dW^{(k)}_T
\end{equation}
where $W^{(k)}_T$ are independent standard Brownian motions, with initial distribution $(Z^{(k)}_0)_{k \geq 1}$ given by a Gaussian vector with covariances 
\[
\Cov(Z^{(r)}_0,Z^{(s)}_0) = \frac{1}{4 \pi^2} \displaystyle \oint_{\Gamma_0} \oint_{\Gamma_{0,w}} \frac{w}{z-w} \frac{r! s!}{z^{r}w^{s}}e^{z+w}(1-z/r)(1-w/s)\frac{dz}{z} \frac{dw}{w}.
\]
Then $(Z^{(k)}_T)_{k \geq 1}$ is stationary, i.e.  
\begin{equation}\label{eq:stationary_in_dist}
    (Z^{(k)}_{T_0})_{k \geq 1} = (Z^{(k)}_0)_{k \geq 1} 
\end{equation}
in distribution, for any fixed time $T_0 > 0$.
\end{proposition}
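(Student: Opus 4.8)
The plan is to reduce the statement to the finite truncations of the system, where stationarity of a centered Gaussian law becomes an algebraic Lyapunov equation, and then to verify that equation directly from the contour integral formula \eqref{eq:final_prelimit_cov}.

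First I would pass to finitely many coordinates. As observed in the proof of \Cref{thm:SDE_uniqueness}, for each $n$ the vector $(Z^{(1)}_T,\dots,Z^{(n)}_T)$ satisfies a closed affine system $dZ_T = A_n Z_T\,dT + dW_T$ driven by $W_T=(W^{(1)}_T,\dots,W^{(n)}_T)$, where $A_n\in\R^{n\times n}$ is the lower bidiagonal matrix with $(A_n)_{kk}=-k$ and $(A_n)_{k,k-1}=k-1$. The marginal of the prescribed initial law on the first $n$ coordinates is the centered Gaussian $N(0,\Sigma^{(n)})$ with $\Sigma^{(n)}_{rs}=\Cov(Z^{(r)}_0,Z^{(s)}_0)$ given by \eqref{eq:final_prelimit_cov}, and the solution of the infinite system is the consistent family of solutions of these truncations; so it suffices to show that for every $n$ the truncated process started from $N(0,\Sigma^{(n)})$ is stationary.

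For fixed $n$ this is a standard fact about linear SDEs. Since $Z_T = e^{TA_n}Z_0 + \int_0^T e^{(T-u)A_n}\,dW_u$, the process $Z_T$ is centered Gaussian for all $T$ and its covariance $\Sigma_T:=\Cov(Z_T)$ solves the affine matrix ODE $\dot\Sigma_T = A_n\Sigma_T+\Sigma_T A_n^\top + I_n$ with $\Sigma_0=\Sigma^{(n)}$; this ODE has a unique solution, so it is enough to check that the constant path $\Sigma_T\equiv\Sigma^{(n)}$ solves it, i.e. that $\Sigma^{(n)}$ satisfies the Lyapunov equation $A_n\Sigma^{(n)}+\Sigma^{(n)}A_n^\top+I_n=0$. (Equivalently, $A_n$ is Hurwitz with eigenvalues $-1,\dots,-n$, and one is checking that $\Sigma^{(n)}$ is the covariance $\int_0^\infty e^{tA_n}e^{tA_n^\top}\,dt$ of its unique invariant law.) In coordinates the Lyapunov equation reads
\[
(r-1)\Sigma_{r-1,s} + (s-1)\Sigma_{r,s-1} - (r+s)\Sigma_{rs} = -\delta_{rs},\qquad r,s\ge 1,
\]
with terms carrying an index $0$ omitted; since, after using $\Sigma_{rs}=\Sigma_{sr}$, both sides are symmetric in $r,s$, I would assume $r\ge s\ge 1$, which makes every covariance appearing an instance of \eqref{eq:final_prelimit_cov} (reading $\Sigma_{r-1,s}$ as $\Sigma_{s,r-1}$ in the single borderline case $r=s$).

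The remaining task, and the main obstacle, is to verify this scalar identity from the integral formula; this is a finite residue computation rather than anything conceptual. Using $\tfrac{r!}{z^r}(1-z/r)=\tfrac{r!}{z^r}-\tfrac{(r-1)!}{z^{r-1}}$, the formula \eqref{eq:final_prelimit_cov} writes $\Sigma_{rs}$ as $-(\tilde I_{rs}-\tilde I_{r-1,s}-\tilde I_{r,s-1}+\tilde I_{r-1,s-1})$, a four-term combination of the building blocks
\[
\tilde I_{ab}:=\frac{1}{(2\pi i)^2}\oint_{\Gamma_0}\oint_{\Gamma_{0,w}}\frac{w}{z-w}\,\frac{a!\,b!\,e^{z+w}}{z^{a+1}w^{b+1}}\,dz\,dw.
\]
Computing the inner $z$-integral via the residues at $z=w$ and $z=0$ and then the $w$-integral via the residue at $w=0$ gives the closed form $\tilde I_{ab}=a!\,b!\sum_{i=a+1}^{a+b}\tfrac{1}{i!\,(a+b-i)!}$ for $b\ge 1$ (with $\tilde I_{a0}=0$), after which the scalar identity above becomes an elementary identity among reciprocal-factorial sums that can be checked term by term; alternatively one can perform the same cancellation directly at the level of integrands, using that the operation $\Sigma_{\bullet,s}\mapsto (r-1)\Sigma_{r-1,s}-r\Sigma_{rs}$ touches only the $z$-factor of the kernel. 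I expect the only genuinely fiddly points to be the bookkeeping at the boundary $k=1$ and in the case $r=s$ flagged above; everything else is routine linear-SDE theory.
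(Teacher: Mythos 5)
Your proposal is correct and follows the same overall route as the paper's proof: pass to the finite system, observe that stationarity of a centered Gaussian law for the linear SDE is equivalent to the covariance ODE/Lyapunov equation (the paper derives exactly the identity $(r-1)\Sigma_{r-1,s}+(s-1)\Sigma_{r,s-1}-(r+s)\Sigma_{rs}=-\delta_{rs}$ from It\^o's lemma), and then verify that identity from the contour formula, taking care that $D(r,s)$ equals $\Cov(\zeta_r,\zeta_s)$ only when $r\geq s$. The only genuine difference is in the final verification step. The paper isolates two residue lemmas: one (\Cref{thm:integral_is_0}) shows $(r-1)D(r-1,s)+(s-1)D(r,s-1)-(r+s)D(r,s)=0$ for $r\geq s$ by reducing the integrand to a Laurent polynomial and checking it vanishes (with computer assistance), and a second (\Cref{thm:integral_is_1}) computes the asymmetry $D(r-1,r)-D(r,r-1)=\tfrac{1}{r-1}$ needed to handle $r=s$, since the Lyapunov equation there calls for $\Sigma_{r-1,r}=D(r,r-1)$ rather than $D(r-1,r)$. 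You instead compute the building blocks $\tilde I_{ab}$ in closed form as $a!\,b!\sum_{i=a+1}^{a+b}\tfrac{1}{i!\,(a+b-i)!}$ (this formula is correct, as is your four-term expansion of $D(r,s)$ in the $\tilde I$'s), which turns the Lyapunov equation into an elementary reciprocal-factorial identity and folds the paper's two lemmas into one combinatorial check. Your flagging of the $r=s$ asymmetry and the $k=1$ boundary is exactly the right caution, and your reading $\Sigma_{r-1,r}$ as $\Sigma_{r,r-1}$ mirrors what the paper's second lemma accomplishes. You do leave the terminal sum verification as ``routine,'' but the paper likewise delegates its analogue to computer algebra, so this is a comparable level of completeness.
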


\begin{remark}
A natural further question is whether the finite-$\tau$ SDEs \eqref{eq:X_SDE} admit a Gaussian solution with fixed-time covariances given by our finite-$\tau$ formula in \Cref{thm:gaussianity}. This seems more difficult to address without the large-time simplification of \Cref{thm:aux_limit_and_nicer_covariances}, and we have not attempted to pursue it in this work.
\end{remark}

To prepare for the proof, we first give two computational lemmas, which will be proven at the end of the section.

\begin{definition}
For $r,s \in \Z_{\geq 1}$, let
\begin{equation}\label{eq:defD}
D(r,s) := \frac{1}{4 \pi^2} \displaystyle \oint_{\Gamma_0} \oint_{\Gamma_{0,w}} \frac{w}{z-w} \frac{r! s!}{z^{r}w^{s}}e^{z+w}(1-z/r)(1-w/s)\frac{dz}{z} \frac{dw}{w}.
\end{equation}
\end{definition}
By \Cref{thm:aux_limit_and_nicer_covariances}, when $r \geq s$ one has $D(r,s) = \Cov(\zeta_r,\zeta_s)$, but as noted in \Cref{rmk:formula_not_symmetric} this is not true when $r < s$. This will be important in computations below.

\begin{lemma}\label{thm:integral_is_0}
For any $r \geq s \geq 1$, 
\begin{equation}\label{eq:integral_is_0}
    (r-1)D(r-1,s) + (s-1)D(r,s-1) - (r+s) D(r,s) = 0.
\end{equation}
\end{lemma}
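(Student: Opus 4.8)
The plan is to collapse the double contour integral defining $D(r,s)$ into an explicit \emph{finite} sum of products of elementary factors; once that is done, the asserted relation degenerates to the trivial identity $(n+r)+(s-n)-(r+s)=0$ term by term.

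First I would rewrite the integrand so that the $z$- and $w$-dependence factor. Setting $g_r(z):=\frac{r!}{z^{r+1}}e^{z}(1-z/r)=e^{z}\bigl(\frac{r!}{z^{r+1}}-\frac{(r-1)!}{z^{r}}\bigr)$, the definition \eqref{eq:defD} becomes $D(r,s)=\frac{1}{4\pi^{2}}\oint_{\Gamma_0}\oint_{\Gamma_{0,w}}\frac{w}{z-w}\,g_r(z)\,g_s(w)\,dz\,dw$. Deforming the contours to concentric circles of radii $\rho_2>\rho_1>0$ (legitimate since the only poles of the integrand are at $z=0$, $w=0$, and $z=w$ is avoided), I expand $\frac{w}{z-w}=\sum_{n\ge1}(w/z)^{n}$ — uniformly convergent there — and integrate term by term. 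Using $\frac{1}{4\pi^{2}}=-\frac{1}{(2\pi i)^{2}}$ and reading the contour integrals as Laurent-coefficient extractions, this gives $D(r,s)=-\sum_{n\ge1}a_n(r)\,b_n(s)$, where $a_n(r):=[z^{n-1}]g_r(z)$ and $b_n(s):=[w^{-n-1}]g_s(w)$. A short computation from the expansion of $g_r$ yields $[z^{m}]g_r=\frac{r!}{(m+r+1)!}-\frac{(r-1)!}{(m+r)!}=-\frac{(m+1)(r-1)!}{(m+r+1)!}$ (treating factorials of negative integers as $\infty$, so those terms vanish), hence
\[
a_n(r)=-\frac{n\,(r-1)!}{(n+r)!},\qquad b_n(s)=\frac{n\,(s-1)!}{(s-n)!},
\]
the latter vanishing for $n>s$, so the sum is finite and $D(r,s)=(r-1)!\,(s-1)!\sum_{n=1}^{s}\frac{n^{2}}{(n+r)!\,(s-n)!}$, valid for all $r,s\ge1$ (and not symmetric in $r,s$, cf.\ \Cref{rmk:formula_not_symmetric}).

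The identity then follows from the two elementary recurrences $(r-1)\,a_n(r-1)=(n+r)\,a_n(r)$ and $(s-1)\,b_n(s-1)=(s-n)\,b_n(s)$, each of which is just the factorial identity $\tfrac{1}{(n+r-1)!}=\tfrac{n+r}{(n+r)!}$ (respectively $\tfrac{(s-1)!}{(s-1-n)!}=\tfrac{(s-n)(s-1)!}{(s-n)!}$). Substituting these into $D(\cdot,\cdot)=-\sum_n a_n b_n$ and adding the three terms,
\[
(r-1)D(r-1,s)+(s-1)D(r,s-1)-(r+s)D(r,s)=-\sum_{n=1}^{s}a_n(r)\,b_n(s)\bigl[(n+r)+(s-n)-(r+s)\bigr]=0 ,
\]
since the bracket vanishes identically. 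I do not expect a genuine obstacle here: the only thing that needs care is bookkeeping — the convention $1/k!=0$ for $k<0$ (which keeps all sums finite), and the observation that although $D(0,s)$ is not itself defined, the product $(r-1)D(r-1,s)$ — equivalently $(r-1)!(s-1)!\sum_{n=1}^{s}\frac{n^{2}}{(n+r-1)!(s-n)!}$ — is well-defined for every $r\ge1$, so the statement also makes sense at the corner $r=s=1$. Justifying the term-by-term integration of the geometric series is routine given uniform convergence on the compact contours.
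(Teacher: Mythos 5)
Your proof is correct, and it takes a genuinely different route from the paper. The paper's proof first uses the residue-extraction identity
\[
\frac{1}{(2 \pi i)^2}\oint\oint\frac{w}{z-w}e^{z+w}z^{-a}w^{-b}\frac{dz}{z}\frac{dw}{w}
= \frac{1}{(2 \pi i)^2}\oint\oint\frac{w}{z-w}\frac{(z+w)^{a+b}}{(a+b)!}z^{-a}w^{-b}\frac{dz}{z}\frac{dw}{w}
\]
to replace $e^{z+w}$ by a Laurent polynomial in each of the three terms, combines them into a single double contour integral, and then verifies by a (symbolic) computer calculation that the resulting integrand vanishes identically. You instead expand the geometric series directly, read off the contour integrals as Laurent-coefficient extractions $a_n(r)=[z^{n-1}]g_r$ and $b_n(s)=[w^{-n-1}]g_s$, and obtain the closed form
\[
D(r,s)=(r-1)!\,(s-1)!\sum_{n=1}^{s}\frac{n^{2}}{(n+r)!\,(s-n)!},
\]
from which the three-term identity degenerates, via the one-line factorial recurrences $(r-1)a_n(r-1)=(n+r)a_n(r)$ and $(s-1)b_n(s-1)=(s-n)b_n(s)$, to the trivial bracket $(n+r)+(s-n)-(r+s)=0$. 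Both proofs use the same geometric-series expansion as the initial wedge, so they are cousins, but yours pushes the computation to an explicit finite sum and eliminates the appeal to a computer, which makes it self-contained and arguably preferable; the byproduct closed form for $D(r,s)$ is also of independent interest, and it immediately exhibits the asymmetry noted in \Cref{rmk:formula_not_symmetric}. I verified the Laurent-coefficient computations, the vanishing $b_n(s)=0$ for $n>s$ (which truncates the sum), and both recurrences. Your remark about the boundary case $r=s=1$ is also well taken: $D(0,s)$ is undefined by the integral formula, but your finite-sum expression for $(r-1)D(r-1,s)$ extends naturally, and one can check it gives $1+0-2D(1,1)=1-2\cdot\tfrac12=0$, consistent with what the proof of \Cref{thm:stationarity} actually needs at $r=s=1$ (where all the $(r-1)$ factors vanish and one simply needs $D(1,1)=\tfrac12$).
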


\begin{lemma}\label{thm:integral_is_1}
For any $r \geq 2$, 
\begin{equation}\label{eq:integral_is_1}
    D(r-1,r)-D(r,r-1) = \frac{1}{r-1}.
\end{equation}
\end{lemma}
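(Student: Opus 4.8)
The plan is to evaluate the double integral $D(r,s)$ in closed form as a finite sum and then reduce the claimed identity to an elementary binomial telescoping. Write $\phi_k(z) := \frac{k!}{z^{k+1}}e^{z}(1-z/k) = \frac{k!}{z^{k+1}}e^{z} - \frac{(k-1)!}{z^{k}}e^{z}$, so that, since $(2\pi i)^2 = -4\pi^2$, the definition \eqref{eq:defD} reads $D(r,s) = -\frac{1}{(2\pi i)^2}\oint\oint \frac{w}{z-w}\,\phi_r(z)\,\phi_s(w)\,dz\,dw$ with the $w$-contour enclosed by the $z$-contour. I would take both contours to be concentric circles with $|w|<|z|$, expand $\frac{w}{z-w} = \sum_{n\ge 1}(w/z)^n$, and integrate term by term (legitimate by uniform convergence on the compact contours). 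Using that the Laurent coefficients of $\phi_k$ at $0$ are $[z^m]\phi_k(z) = -\frac{(k-1)!\,(m+1)}{(m+k+1)!}$ for $m\ge -k-1$ (and $0$ otherwise), the $n$-th term contributes $\big(\tfrac{1}{2\pi i}\oint z^{-n}\phi_r\,dz\big)\big(\tfrac{1}{2\pi i}\oint w^{n}\phi_s\,dw\big) = \big({-}\tfrac{(r-1)!\,n}{(n+r)!}\big)\cdot\tfrac{(s-1)!\,n}{(s-n)!}$, the $w$-factor vanishing for $n>s$. This gives, for all $r,s\ge 1$,
\[
D(r,s) = (r-1)!\,(s-1)!\sum_{n=1}^{s}\frac{n^{2}}{(n+r)!\,(s-n)!}.
\]

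Next I would specialize to $(r-1,r)$ and to $(r,r-1)$. Reindexing the sum in $D(r,r-1)$ by $n\mapsto n-1$ puts both sums over the common denominators $(n+r-1)!\,(r-n)!$, and the numerators collapse via $n^2-(n-1)^2 = 2n-1$, yielding
\[
D(r-1,r) - D(r,r-1) = (r-2)!\,(r-1)!\sum_{n=1}^{r}\frac{2n-1}{(n+r-1)!\,(r-n)!}.
\]
Rewriting $\frac{1}{(n+r-1)!\,(r-n)!} = \frac{1}{(2r-1)!}\binom{2r-1}{r-n}$ and substituting $j=r-n$, the whole problem reduces to the identity $\sum_{j=0}^{r-1}(2r-1-2j)\binom{2r-1}{j} = r\binom{2r-1}{r}$.

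This last identity I would prove by the telescoping observation $(m-2j)\binom{m}{j} = (j+1)\binom{m}{j+1} - j\binom{m}{j}$, immediate from $(j+1)\binom{m}{j+1}=(m-j)\binom{m}{j}$: taking $m=2r-1$ and summing $j=0,\dots,r-1$, the right side telescopes to $r\binom{2r-1}{r} - 0\cdot\binom{2r-1}{0}$. Tracing back through the substitutions,
\[
D(r-1,r) - D(r,r-1) = (r-2)!\,(r-1)!\cdot\frac{r\binom{2r-1}{r}}{(2r-1)!} = (r-2)!\,(r-1)!\cdot\frac{r}{r!\,(r-1)!} = \frac{(r-2)!}{(r-1)!} = \frac{1}{r-1},
\]
where the hypothesis $r\ge 2$ guarantees that $\phi_{r-1}$ and all the factorials make sense.

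No step here is a serious obstacle; the points that require care are the sign bookkeeping in passing from $\frac{1}{4\pi^2}$ to $-\frac{1}{(2\pi i)^2}$, verifying that the $w$-integral genuinely annihilates every term with $n>s$ — so that the closed-form sum for $D(r,s)$ is valid without assuming $r\ge s$, which is essential since we apply it with $r<s$ in $D(r-1,r)$ — and the index shift that merges the two sums into a single $(2n-1)$-weighted sum. As a byproduct, the same formula for $D(r,s)$ turns \Cref{thm:integral_is_0} into a routine (if longer) algebraic verification.
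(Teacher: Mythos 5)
Your proof is correct and takes a genuinely different route from the paper's. The paper combines $D(r-1,r)-D(r,r-1)$ into a single double contour integral and observes that the resulting numerator factorizes with a factor $(z-w)$ that cancels the $\frac{w}{z-w}$ kernel, leaving an elementary residue at the origin with no $z=w$ pole to worry about; the answer then drops out as a Taylor coefficient of $e^{z+w}\bigl((r-z)(r-w)-r\bigr)$. (Incidentally, the displayed intermediate identity in the paper's proof carries two compensating sign typos: the ``$+$'' between the two terms should be ``$-$'', and $(w-z)$ should be $(z-w)$; the final answer is unaffected.) You instead derive the closed form $D(r,s)=(r-1)!(s-1)!\sum_{n=1}^{s}\frac{n^{2}}{(n+r)!\,(s-n)!}$, valid for all $r,s\geq 1$, by expanding $\frac{w}{z-w}$ as a geometric series and reading off Laurent coefficients of $\phi_k$, then reduce the lemma to the elementary telescoping identity $\sum_{j=0}^{r-1}(2r-1-2j)\binom{2r-1}{j}=r\binom{2r-1}{r}$. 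This is more work for this one lemma than the paper's ad hoc cancellation, but it pays off in generality: the explicit formula for $D(r,s)$ also reduces \Cref{thm:integral_is_0} to an algebraic check (which the paper delegates to a computer), and you correctly flag the necessary subtlety that the closed form must hold for $r<s$, guaranteed by the truncation of the $w$-residue at $n\leq s$. One cosmetic quibble: you display ``the $n$-th term contributes $\bigl(-\tfrac{(r-1)!\,n}{(n+r)!}\bigr)\cdot\tfrac{(s-1)!\,n}{(s-n)!}$'' (a negative quantity) and then immediately state the positive sum for $D(r,s)$; the overall factor $-\frac{1}{(2\pi i)^2}$ you introduced earlier is what flips the sign, and it would be clearer to carry it visibly through that line.
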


\begin{proof}[Proof of {\Cref{thm:stationarity}}]
It suffices to show
\begin{equation}\label{eq:finite_stationarity}
    (Z^{(k)}_{T_0})_{1 \leq k \leq n} = (Z^{(k)}_0)_{1 \leq k \leq n} \quad \quad \text{in distribution}
\end{equation}
for each $n \geq 1$ and $T_0>0$. First note that the solution $(Z^{(k)}_T)_{1 \leq k \leq n}$ is a Gaussian process, so its distribution at time $T_0$ is determined by its covariance matrix, i.e. it suffices to check
\begin{equation}\label{eq:covs_equal}
    \Cov\left(Z^{(r)}_{T_0},Z^{(s)}_{T_0}\right) = \Cov\left(Z^{(r)}_0,Z^{(s)}_0\right)
\end{equation}
for each $1 \leq s \leq r$. Let
\[
A_{r,s}(T) := \Cov\left(Z^{(r)}_{T},Z^{(s)}_{T}\right)
\]
for $r,s \geq 1$. It follows by applying It\^o's  lemma that 
\begin{equation}\label{eq:cov_ODE}
    \dderiv{}{T} A_{r,s}(T) = \bbone(r=s) + (r-1) A_{r-1,s}(T) + (s-1)A_{r,s-1}(T) - (r+s)A_{r,s}(T) 
\end{equation}
(this computation can be done for quite general systems of SDEs, see \cite[(4.3)]{borodin2017stochastic}). Hence to check \eqref{eq:covs_equal}, it suffices to check that the RHS of \eqref{eq:cov_ODE} is $0$ when the constant solution 
\[
A_{r,s}(T) =  D(r,s)
\]
is plugged in. When $r > s$, this follows directly from \Cref{thm:integral_is_0}. When $r=s$, since 
\[
A_{r-1,r} = A_{r,r-1} = D(r,r-1)
\]
we have
\[
\text{RHS{\eqref{eq:cov_ODE}}} = 1 + (r-1)D(r-1,r) + (r-1)D(r,r-1) - 2r D(r,r) + (r-1)(D(r,r-1) - D(r-1,r))
\]
which is $0$ by \Cref{thm:integral_is_0} and \Cref{thm:integral_is_1}. This completes the proof.
\end{proof}

\begin{proof}[Proof of {\Cref{thm:integral_is_0}}]
We obtain that 
\begin{equation}\label{eq:exp_to_power}
\frac{1}{(2 \pi i)^2}  \oint_{\gamma_0} \oint_{\gamma_{0,w}} \frac{w}{z-w}e^{z+w}z^{-a}w^{-b} \frac{dz}{z} \frac{dw}{w} = \frac{1}{(2 \pi i)^2}  \oint_{\gamma_0} \oint_{\gamma_{0,w}} \frac{w}{z-w}\frac{(z+w)^{a+b}}{(a+b)!} z^{-a}w^{-b} \frac{dz}{z} \frac{dw}{w} 
\end{equation}
for $a,b \geq 0$, by expanding 
\[
\frac{w}{z-w} = \pfrac{w}{z} + \pfrac{w}{z}^2+\ldots 
\]
(using that $|w|<|z|$ along the contours) and taking the residue expansion of both sides. Using \eqref{eq:exp_to_power} to convert the integral in \eqref{eq:defD} to one with integrand of the form 
\[
\frac{w}{z-w}\text{(Laurent polynomial in $z,w$)},
\]
and combining the three integrals in \eqref{eq:integral_is_0} into a single double contour integral, it is easily verified (by a computer) that the integrand is $0$.
\end{proof}

\begin{proof}[Proof of {\Cref{thm:integral_is_1}}]
One has
\begin{multline*}
    D(r-1,r)-D(r,r-1) = \frac{1}{4 \pi^2} \oint_{\gamma_0} \oint_{\gamma_{0,w}} \frac{w}{z-w} (r-1)!(r-2)!e^{z+w} \\
    \cdot \left(\frac{1}{z^{r-1}w^r}(r-1-z)(r-w)+\frac{1}{z^rw^{r-1}}(r-z)(r-1-w)\right)\frac{dz}{z}\frac{dw}{w}.
\end{multline*}
Since
\[
\left(\frac{1}{z^{r-1}w^r}(r-1-z)(r-w)+\frac{1}{z^rw^{r-1}}(r-z)(r-1-w)\right) = \frac{1}{z^rw^r}(w-z)((r-z)(r-w)-r)
\]
which cancels the $\frac{1}{z-w}$ in the integrand, the only poles in the integrand are at $w=0$ and $z=0$. The result \eqref{eq:integral_is_1} now follows by taking this residue.
\end{proof}

\begin{proof}[Proof of \Cref{thm:auxlimit_intro}]
Uniqueness of the stationary solution to \eqref{eq:SDEs_intro} was shown in \Cref{thm:SDE_uniqueness}. In \Cref{thm:aux_limit_and_nicer_covariances} we showed that the $X^{(i)}_\tau$ converge to a jointly Gaussian vector with the explicit covariances given in \Cref{thm:auxlimit_intro}, which is the second half of the theorem. In \Cref{thm:stationarity} we showed that this jointly Gaussian vector also describes the unique stationary solution to \eqref{eq:SDEs_intro}, which accounts for the first half.
\end{proof}

\section{Bulk fluctuations}\label{sec:bulk}

In this section, we gather the random variables $\zeta_i$ into a single stochastic process, and compute its covariance in \Cref{thm:bulk_intro} by analysis of the contour integral from \Cref{thm:aux_limit_and_nicer_covariances}.

\begin{definition}
Let $Y_T, T \in \R^+$ be the stochastic process for which $Y_0 = 0$, $Y_n = \zeta_n$ for all $n \in \Z_{\geq 1}$ with $\zeta_n$ as in \Cref{thm:aux_limit_and_nicer_covariances}, and 
\[
Y_{n+\alpha} = (1-\alpha )Y_n + \alpha Y_{n+1}
\]
for $n \in \Z_{\geq 1}, \alpha \in (0,1)$.
\end{definition}

Finally, we recall the main result.

\bulkintro*

\begin{proof}

Before getting to the main computation, we must take care of some technical details. Firstly, we are justified in speaking of the unique Gaussian process with covariances as in the theorem statement, because a Gaussian process is determined by its (jointly Gaussian) finite-dimensional distributions, and these Gaussian vectors are determined by their covariances. $R_T^{(k)}$ is a Gaussian process; when $k + T\sqrt{k} \in \Z$, $R_T^{(k)} = k^{1/4}\zeta_{k + T\sqrt{k}}$ is Gaussian, and for other values of $T$ $R_T^{(k)}$ is a convex combination of Gaussians and hence also Gaussian. Hence to show convergence of finite-dimensional distributions to $R_T$, it suffices to show convergence of pairwise covariances of $R_T^{(k)}$ to those of $R_T$, i.e. we must show
\begin{equation}\label{eq:cov_conv_sufficient}
    \Cov(R_a^{(k)}, R_b^{(k)}) \to \int_0^\infty y^2 e^{-y^2-|b-a|y}dy \text{     as $k \to \infty$,}
\end{equation}
where without loss of generality $a \geq b$. 

Since $R_a^{(k)}$ is in general a convex combination $p(a,k)\zeta_{k+\floor{a\sqrt{k}}} + (1-p(a,k))\zeta_{k+\ceil{a \sqrt{k}}})$ with some $p(a,k) \in [0,1]$, and similarly for $R_b^{(k)}$, to show \eqref{eq:cov_conv_sufficient} it suffices to show
\begin{equation}\label{eq:cov_conv_with_zeta}
    k^{1/2}\Cov(\zeta_{k+\floor{a\sqrt{k}}}, \zeta_{k+\floor{b\sqrt{k}}}) \to \int_0^\infty y^2 e^{-y^2-|b-a|y}dy \text{     as $k \to \infty$,}
\end{equation}
along with the same convergence where one or both floor functions are replaced by ceiling functions. We will show \eqref{eq:cov_conv_with_zeta} by steepest-descent analysis of the integral formula for covariances \eqref{eq:final_prelimit_cov}, and the versions with one or both floor functions replaced by ceilings are exactly analogous.

Let $r = k + \floor{a \sqrt{k}}, s = k + \floor{b \sqrt{k}}$.

First change variables in \eqref{eq:final_prelimit_cov} to $\tz = z/r, \tw = w/s$ to obtain
\begin{equation}
    \sqrt{k}\Cov(\zeta_r, \zeta_s) =\frac{1}{4 \pi^2} \displaystyle \oint_{\Gamma_0} \oint_{\Gamma_{0,\frac{s}{r}\tw}} \sqrt{k} \frac{s \tw}{r \tz-s \tw} \frac{r! s!}{r^{r}s^{s} \tz^{r}\tw^{s}}e^{r\tz+s\tw}(1-\tz)(1-\tw) \frac{d\tz}{\tz}\frac{d\tw}{\tw}
\end{equation}
Using Stirling's approximation $n! = \sqrt{2 \pi n}(n/e)^n e^{o(1)}$, the above equals
\begin{equation}\label{eq:int_after_stirling}
    \frac{1}{4 \pi^2} \displaystyle \oint_{\Gamma_0} \oint_{\Gamma_{0,\frac{s}{r}\tw}}\sqrt{k} (2 \pi \sqrt{rs})\frac{ s \tw}{r \tz-s \tw}(1-\tz)(1-\tw) e^{rF(\tz)+sF(\tw)+o(1)} \frac{d\tz}{\tz}\frac{d\tw}{\tw},
\end{equation}
where here and henceforth $F(z) = z-\log z - 1$. It is easy to check that $F(z)$ has a unique critical point at $z=1$ which is second-order, and our steepest descent will consist of zooming in on this critical point.

For the contours $\Gamma_0$ and $\Gamma_{0,\frac{s}{r}\tw}$ above, we will use the (counterclockwise-oriented) contours $C_\tz = C_\tz(k)$ and $C_\tw = C_\tw(k)$ which are pictured in \Cref{fig:contours} and which we now describe. First fix any $\delta$ with $1/3 < \delta < 1/2$. Let 
\begin{align}
    C_\tz &= \{\min(x,1)+iy: x^2+y^2= 1+k^{-2\delta}\} \\
    C_\tw &= \{\min(x,1-k^{-1/2}) + i y: x^2+y^2 = (1-k^{-1/2})^2+k^{-2\delta}\}.
\end{align}
Each contour has two parts, one a subset of a circle and one a vertical line; call the circular parts $C_\tz'',C_\tw''$ and the vertical parts $C_\tz', C_\tw'$. 

\begin{figure}[h]
\begin{center}
\includegraphics{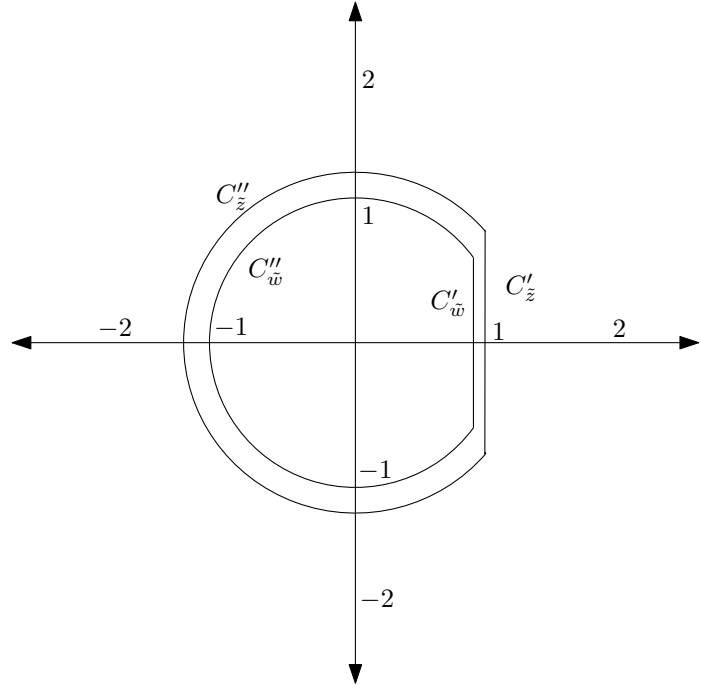}
\end{center}
 \caption{The contours $C_\tz$ and $C_\tw$ (figure not to scale).}\label{fig:contours}
\end{figure}

We first claim that only the integral with $(\tw,\tz) \in C_\tw' \times C_\tz'$ contributes asymptotically, i.e.
\begin{equation}\label{eq:int_vanishes_outside_vertical}
    \oint \oint_{(C_\tw \times C_\tz) \setminus (C_\tw' \times C_\tz')} \sqrt{k} (2 \pi \sqrt{rs})\frac{ s \tw}{r \tz-s \tw}(1-\tz)(1-\tw) e^{rF(\tz)+sF(\tw)+o(1)} \frac{d\tz}{\tz}\frac{d\tw}{\tw} \to 0
\end{equation}
as $k \to \infty$.

It is clear from the definition of the contours that the distance between them is $\const \cdot k^{-1/2} + o(k^{-1/2})$. Since $r \geq s$, we therefore have 
\begin{equation}
    \frac{1}{r\tz-s\tw} \leq \const \cdot  k^{1/2}/s.
\end{equation}
Hence 
\begin{equation}
    \abs*{\sqrt{k} (2 \pi \sqrt{rs})\frac{ s \tw}{r \tz-s \tw}\frac{(1-\tz)(1-\tw)}{\tz \tw}} \leq \text{const}  \cdot k^{2}
\end{equation}
for some constant, for all large enough $k$.
We have 
\begin{equation*}
    \sup_{\tz \in C_\tz''}\Re (F(\tz)) \leq -\log|\tz| =  -\log \sqrt{1+k^{-2\delta}} = -\frac{1}{2}k^{-2\delta}+o(k^{-2\delta})
\end{equation*}
and similarly 
\begin{equation*}
\sup_{\tw \in C_\tw''}\Re F(\tw) \leq -\frac{1}{2}k^{-2\delta}+o(k^{-2\delta}).    
\end{equation*}
Hence for such $\tz \in C_\tz''$ and $\tw \in C_\tw''$,  we have $\Re(r F(\tz)), \Re(s F(\tw)) \leq -\frac{1}{2}k^{1-2\delta} + o(k^{1-2\delta})$. On the vertical segments $C_\tz'$ and $C_\tw'$, $\Re F$ is maximized at the unique real value, so $\sup_{\tz \in C_\tz'} \Re F(\tz) = 0$ and 
\begin{equation}
    \sup_{\tw \in C_\tw'} \Re F(\tw) = - k^{-1/2} - \log (1-k^{-1/2}) = O(k^{-1}).
\end{equation}
Thus if $\tz \in C_\tz, \tw \in C_\tw$ and at least one of $\tz \in C_\tz''$ or $\tw \in C_\tw''$ holds, 
\begin{equation}
    e^{rF(\tz)+sF(\tw)}\leq  e^{-\frac{1}{2}k^{1-2\delta} + o(k^{1-2\delta})}.
\end{equation}

It follows that the integrand in \eqref{eq:int_vanishes_outside_vertical} is bounded by 
\begin{equation}
    \const \cdot  k^{2} e^{-\frac{1}{2}k^{1-2\delta} + o(k^{1-2\delta})}
\end{equation}
uniformly in $k$ over the domain of integration (which, recall, also depends on $k$). Since the lengths of the $k$-dependent contours $C_\tz,C_\tw$ are bounded over all $k$, and the above bound converges to $0$ since $1-2\delta > 0$, we have established \eqref{eq:int_vanishes_outside_vertical}.

Now we consider the remaining part of the integral,
\begin{equation}\label{eq:int_on_vertical_before_uv_sub}
    \frac{1}{2 \pi} \oint_{C_\tw'} \oint_{C_\tz'}\sqrt{krs}\frac{ s \tw}{r \tz-s \tw}(1-\tz)(1-\tw) e^{rF(\tz)+sF(\tw)+o(1)} \frac{d\tz}{\tz}\frac{d\tw}{\tw}
\end{equation}
We have $F'(\tz) = 1-1/\tz$ and $F''(\tz) = 1/\tz^2$, so Taylor expanding about $1$ we have $F(\tz) = (\tz-1)^2/2 + O((\tz-1)^3)$. Since $C_\tz' = \{1+iy: y \in (-k^{-\delta},k^{-\delta})\}$, for $\tz \in C_\tz'$ one has $|\tz-1|^3 < k^{-3\delta}$, and similarly $|\tw-1|^3 < k^{-3\delta}$ for $\tw \in C_\tw'$. Because $\delta > 1/3$, we have $|F(\tz) - (\tz-1)^2/2| = o(1)$ as $k \to \infty$ uniformly over $C_\tz'$, and similarly for $\tw$. 

Let us change variables in \eqref{eq:int_on_vertical_before_uv_sub} to $u,v$, defined by $\tz = 1+i k^{-1/2} u$ and $\tw = 1-k^{-1/2}+i k^{-1/2} v$. The condition that $z \in C_\tz', w \in C_\tw'$ translates to $-k^{1/2-\delta} < u,v < k^{1/2-\delta}$, and by the previous paragraph
\begin{align*}
    F(\tz) &= -\frac{1}{2k}u^2 + o_u(1) \\
    F(\tw) &= -\frac{1}{2k}(v+i)^2 + o_v(1)
\end{align*}
where the error terms depend on $u$ (resp. $v$) but are bounded uniformly on the domain of integration. Thus we may write \eqref{eq:int_on_vertical_before_uv_sub} as
\begin{align}\label{eq:int_after_uv_sub}
\begin{split}
    &\frac{1}{2\pi} \int_\R \int_\R \bbone(|u|,|v| < k^{1/2-\delta})\sqrt{krs} \frac{s(1+k^{-1/2}(-1+iv))}{(\sqrt{k}+\floor{a\sqrt{k}}-\floor{b\sqrt{k}}) + \sqrt{k} i u - \sqrt{k} i v + o(\sqrt{k})} \\
    &\quad \quad \times \left(-i\frac{u}{\sqrt{k}}\right)\left(\frac{1-iv}{\sqrt{k}}\right) e^{-u^2/2-(v+i)^2/2 + o_{u,v}(1)} \frac{i k^{-1/2}du}{1+i u /\sqrt{k}} \frac{i k^{-1/2}dv}{1+k^{-1/2}(-1+iv)} \\
    &= \frac{1}{2\pi} \int_\R \int_\R \bbone(|u|,|v| < k^{1/2-\delta}) \frac{s\sqrt{krs}}{k^{5/2}} \frac{1+k^{-1/2}(-1+iv)}{1+a-b+iu-iv + o(\sqrt{k})} \\
    & \quad \quad \times e^{-u^2/2-(v+i)^2/2 + o_{u,v}(1)}\frac{u(i+v)}{(1+k^{-1/2}(-1+iv))(1+i u /\sqrt{k})} du dv
\end{split}
\end{align}
Recalling that $r$ and $s$ are $k+o(k)$ and $u,v = o(k^{1/2})$ in the domain of integration, we see that the integrand in \eqref{eq:int_after_uv_sub} converges to 
\begin{equation}
    \frac{1}{1+(a-b)+iu-iv} u(v+i) e^{-u^2/2 - (v+i)^2/2} du dv
\end{equation}
as $k \to \infty$, and furthermore that there exists a constant $C$ such that it is dominated by the integrable function 
\begin{equation}
    C \bbone\left(|u|,|v| < k^{1/2-\delta}\right) \frac{1}{c+(a-b)+iu-iv} u(v+ic) e^{-u^2/2 - (v+i)^2/2}
\end{equation}
for all $u,v \in \R$ and all large enough $k$. Hence by dominated convergence, \eqref{eq:int_after_uv_sub} converges to 
\begin{equation}
    \frac{1}{2\pi}\int_\R \int_\R \frac{1}{1+(a-b)+iu-iv} u(v+i) e^{-u^2/2 - (v+i)^2/2} du dv.
\end{equation}
Note that the convergence above would be exactly the same if $\floor{a\sqrt{k}}$ and/or $\floor{b\sqrt{k}}$ had been replaced by ceiling functions, as mentioned earlier. 

Using the identity 
\begin{equation}
\frac{1}{\alpha} = \int_0^\infty e^{-y\alpha} dy
\end{equation}
if $\Re(\alpha) > 0$, since $1+ a-b \geq 1 $ the above integral is equal to 
\begin{align}\label{eq:split_into_u_and_v_integral}
    &\frac{1}{2\pi} \int_\R \int_\R \left(\int_0^\infty e^{-y(1+a-b+iu-iv)}dy\right)u(v+i) e^{-u^2/2 - (v+i)^2/2} du dv \\
    &= \frac{1}{2\pi} \int_0^\infty e^{-y^2-y(a-b)} \left(\int_\R e^{-\frac{1}{2}(u^2+2iyu-y^2)}du\right)\left(\int_\R e^{-\frac{1}{2}((v+i)^2-2iy(v+i)-y^2)}dv\right)dy.
\end{align}
Since 
\begin{equation*}
    \int_\R e^{-\frac{1}{2}(u^2+2iyu-y^2)}du = -\sqrt{2\pi} iy 
\end{equation*}
and 
\begin{equation*}
    \int_\R e^{-\frac{1}{2}((v+i)^2-2iy(v+i)-y^2)}dv = \sqrt{2\pi} i y
\end{equation*}
(for the latter we must shift contours from $\R$ to $\R-i$ before evaluating the Gaussian integral), we have that \eqref{eq:split_into_u_and_v_integral} is equal to 
\begin{equation}
    \int_0^\infty y^2 e^{-y^2-y(a-b)}dy,
\end{equation}
completing the proof.
\end{proof}






\end{document}